\newcommand*\ZZ{|[draw,circle]| \mathbb{Z}/2}
\newcommand*\ZZZ{|[draw,circle]| \mathbb{Z}/8}
\newcommand*\ZZZZZ{|[draw,circle]| \mathbb{Z}/4}
\newcommand*\W{|[draw,circle]| \mathbb{Z}/3}
\newtheorem{theorem}{Theorem}[section]
\newtheorem{lemma}[theorem]{Lemma}
\newtheorem{proposition}[theorem]{Proposition}
\newtheorem{corollary}[theorem]{Corollary}
\newcommand{\bb}[1]{\mathbb{#1}}
\newcommand{\cal}[1]{\mathcal{#1}}
\newcommand{\ext}{\text{Ext}}
\newcommand{\Ker}{\text{Ker}}
\newcommand{\holim}{\text{holim}}
\newcommand{\hocolim}{\text{hocolim}}
\newcommand{\Top}{\text{Top}}
\newcommand{\Map}{\text{Map}}
\newcommand{\Sq}{\text{Sq}}
\newcommand{\Vect}{\text{Vect}}
\begin{document}

\title{Metastable Complex Vector Bundles over Complex Projective Spaces}

\author[Yang Hu]{Yang Hu}

\address{Department of Mathematics \\ University of Oregon \\ Eugene, OR 97403 \\ United States} 

\email{yhu2@uoregon.edu}

\date{\today}

\begin{abstract}

We apply Weiss calculus to count rank $r$ topological complex vector bundles over complex projective spaces
 $\bb{C}P^l$ with vanishing Chern classes when $r=l-1$ and $r=l-2$, which are the first two cases of the metastable range. 

\end{abstract}

\maketitle

\section{Introduction}

How many times must a boy look up, before he can see the sky? And how many rank $r$ topological complex vector bundles are there up to isomorphism, over a fixed finite dimensional CW complex? While the answer to the first question is forever blowin' in the wind, the answer to the latter can be approached using tools from homotopy theory.

Let  $\Vect_r(X) \cong [X, BU(r)]$ as usual denote the pointed set of rank $r$ bundles over a finite dimensional CW complex $X$, and let $\Vect_r^0(X)$ denote the subset of rank $r$ bundles whose Chern classes all vanish.

\begin{theorem}[The first unstable case] \label{firstunstable}
Let $l>2$ be an integer, and let $\psi(l)$ denote the cardinality of $\Vect_{l-1}^0(\bb{C} P^l)$.  Then
$\psi(l) = 2$ if $l$ is odd, and $\psi(l) = 1$ if $l$ is even.
\end{theorem}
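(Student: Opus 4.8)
The plan is to identify $\Vect_{l-1}^0(\bb{C}P^l)$ with the set of \emph{stably trivial} rank $(l-1)$ bundles and then to compute the latter by unstable obstruction theory. First I would observe that over $\bb{C}P^l$ a bundle has vanishing Chern classes if and only if it is stably trivial: the Chern character embeds the torsion-free group $\widetilde{KU}^0(\bb{C}P^l)$ into $H^{\mathrm{even}}(\bb{C}P^l;\bb{Q})$, so the vanishing of all Chern classes forces the stable class to be zero, while the converse is immediate. Thus $\Vect_{l-1}^0(\bb{C}P^l)$ is exactly the preimage of the basepoint under the stabilization map $[\bb{C}P^l, BU(l-1)] \to [\bb{C}P^l, BU]$.

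Next I would feed this into the fibration sequence $U/U(l-1) \to BU(l-1) \to BU$. Writing $W = U/U(l-1)$, exactness of the associated sequence of pointed sets identifies the stably trivial bundles with the image of $[\bb{C}P^l, W] \to [\bb{C}P^l, BU(l-1)]$, whose fibers are the orbits of the action of $[\bb{C}P^l, \Omega BU] \cong [\bb{C}P^l, U] \cong KU^1(\bb{C}P^l)$. Since $\bb{C}P^l$ has cells only in even degrees, $KU^1(\bb{C}P^l) = 0$, the action is trivial, and hence $\Vect_{l-1}^0(\bb{C}P^l) \cong [\bb{C}P^l, W]$. It therefore remains to compute this single homotopy set.

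The space $W = U/U(l-1)$ is $(2l-2)$-connected with $\pi_{2l-1}(W) \cong \bb{Z}$, and $H^*(W;\bb{Z})$ is exterior on odd generators starting in degree $2l-1$, so its $2l$-skeleton is $S^{2l-1}$. Using the cofiber sequence $\bb{C}P^{l-1} \to \bb{C}P^l \xrightarrow{q} S^{2l}$ together with the vanishing $[\bb{C}P^{l-1}, W] = 0$ (as $\dim \bb{C}P^{l-1} = 2l-2$ and $W$ is $(2l-2)$-connected), the Puppe sequence shows $q^*$ is surjective and exhibits $[\bb{C}P^l, W]$ as the quotient of $\pi_{2l}(W)$ by the image of the coaction boundary $(\Sigma\phi)^* : [\Sigma \bb{C}P^{l-1}, W] \to \pi_{2l}(W)$, where $\phi : S^{2l-1} \to \bb{C}P^{l-1}$ is the attaching map of the top cell. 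From the long exact sequence of the fibration one gets $\pi_{2l}(W) \cong \pi_{2l-1}(U(l-1))$, the first unstable homotopy group of the unitary group, which classically is $\bb{Z}/2$ for $l$ odd and $0$ for $l$ even.

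Finally I would analyze the coaction. Since $[\Sigma \bb{C}P^{l-1}, W] \cong H^{2l-1}(\Sigma \bb{C}P^{l-1};\bb{Z}) \cong \bb{Z}$ is generated by the bottom-cell inclusion after collapse onto the top cell, its image in $\pi_{2l}(W)$ is generated by $\iota \circ c \circ \Sigma\phi$, where $c : \Sigma\bb{C}P^{l-1} \to S^{2l-1}$ is the collapse and $\iota : S^{2l-1} \to W$ is the bottom cell. The class $c \circ \Sigma\phi \in \pi_{2l}(S^{2l-1}) \cong \bb{Z}/2$ is the suspension of the attaching map relating the top two cells of $\bb{C}P^l$, and it is the nonzero element $\eta$ precisely when $\Sq^2 x^{l-1} = (l-1)x^l \neq 0$ in $H^*(\bb{C}P^l;\bb{Z}/2)$, i.e.\ precisely when $l$ is even. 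Hence for $l$ odd the coaction is trivial and $[\bb{C}P^l, W] \cong \bb{Z}/2$, giving $\psi(l) = 2$, whereas for $l$ even already $\pi_{2l}(W) = 0$ forces $[\bb{C}P^l, W] = 0$ and $\psi(l) = 1$. I expect the main obstacle to be this last step: correctly identifying the coaction boundary with precomposition by $\Sigma\phi$ and pinning down $c \circ \Sigma\phi$ through the Steenrod operation $\Sq^2$, while also invoking the classical value of the unstable group $\pi_{2l-1}(U(l-1))$.
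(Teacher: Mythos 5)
Your proposal is correct, but it takes a genuinely different route from the paper's. The paper first proves the general metastable identification $\Vect_r^0(\bb{C}P^l)\cong\{\bb{C}P^l,\Sigma\bb{C}P^{\infty}_r\}$ via Weiss calculus (Theorem \ref{reduction}) and then computes $\{\bb{C}P^l,\Sigma\bb{C}P^{\infty}_{l-1}\}$ by feeding Adams spectral sequence computations of $\pi^s_*(\Sigma\bb{C}P^{\infty}_{l-1})$ (Lemma \ref{sthomotopygeneral}) into the Atiyah--Hirzebruch spectral sequence; the decisive inputs there are $\pi^s_{2l}(\Sigma\bb{C}P^{\infty}_{l-1})$, which is $\bb{Z}/2$ or $0$ according to the parity of $l$ (and which, via Toda's theorem, is exactly your $\pi_{2l-1}(U(l-1))$), together with the $\Sq^2$-detected attaching map of $\bb{C}P^l/\bb{C}P^{l-2}$ controlling the $d_2$-differential. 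You instead work entirely unstably with $W=U/U(l-1)$: your identification $\Vect^0_{l-1}(\bb{C}P^l)\cong[\bb{C}P^l,W]$, using $K^1(\bb{C}P^l)=0$ to trivialize the $[\bb{C}P^l,U]$-action, is precisely the description the paper mentions in its introduction and then deliberately avoids as ``unstable in nature''; you show it is in fact accessible in this first case because the range is so tight that only one potentially unstable group enters. Your individual steps check out: $[\bb{C}P^{l-1},W]=0$ by connectivity, $[\Sigma\bb{C}P^{l-1},W]\cong\bb{Z}$ by the Hopf classification theorem with generator $\iota\circ c$, $\pi_{2l}(W)\cong\pi_{2l-1}(U(l-1))$ from the fibration $U(l-1)\to U\to W$ (the map $\pi_{2l-1}(U(l-1))\to\pi_{2l-1}(U)\cong\bb{Z}$ vanishes since the source is finite, a point worth making explicit), the classical values $\pi_{2n+1}(U(n))\cong\bb{Z}/2$ for $n$ even and $0$ for $n$ odd, and the computation $\Sq^2 x^{l-1}=(l-1)x^l$ matching the paper's own analysis of the two-cell complex $\bb{C}P^l/\bb{C}P^{l-2}$. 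The one mildly delicate point, which you correctly flag, is identifying the coaction with translation by $(\Sigma\phi)^*$; this is legitimate here because the wedge-to-product map for $S^{2l}\vee\Sigma\bb{C}P^{l-1}$ is $(2l+2)$-connected, so the coaction is determined by its two projections. In terms of trade-offs: your argument is more elementary and self-contained for this particular theorem (no Weiss tower, no Adams or Atiyah--Hirzebruch spectral sequences), resting on a single classical unstable input; the paper's machinery buys uniformity, since the Weiss-calculus reduction handles the whole metastable range $\frac{l}{2}\leq r\leq l-1$ at once, whereas your route would require progressively deeper unstable homotopy of $U(n)$ and more intricate coaction analysis (three-cell complexes, Mosher's computations) already for the second unstable case.
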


\begin{theorem}[The second unstable case] \label{secondunstable}
Let $l>3$ be an integer, and let $\phi(l)$ denote the cardinality of $\Vect_{l-2}^0(\bb{C} P^l)$.  
The numbers $\phi(l)$ exhibit the following 24-fold periodic behavior.

\begin{center}
\begin{tabular}{c | c c c c c c c c c c c c}
\hline
$l \mod 24$ & $0$ & $1$ & $2$ & $3$ & $4$ & $5$ & $6$ & $7$ & $8$ & $9$ & $10$ & $11$ \\
\hline
$\psi(l)$ & $1$ & $1$ & $12$ & $2$ & $1$ & $3$ & $2$ & $2$ & $3$ & $1$ & $4$ & $6$ \\
\hline
\end{tabular}
\end{center}

\begin{center}
\begin{tabular}{c | c c c c c c c c c c c c}
\hline
$l \mod 24$ & $12$ & $13$ & $14$ & $15$ & $16$ & $17$ & $18$ & $19$ & $20$ & $21$ & $22$ & $23$ \\
\hline
$\phi(l)$ & $1$ & $1$ & $6$ & $2$ & $1$ & $3$ & $4$ & $2$ & $3$ & $1$ & $2$ & $6$ \\
\hline
\end{tabular}
\end{center}
\end{theorem}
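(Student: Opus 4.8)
The plan is to run the same unitary (Weiss) calculus machine that establishes Theorem \ref{firstunstable}, but carried one stage further up the tower. Write $F(V) = BU(V)$ for the functor sending a complex inner product space to the classifying space of its unitary group, let $T_n F$ be its $n$-th polynomial approximation, and let $D_n F$ be the homogeneous layers. Evaluating at $\mathbb{C}^{l-2}$ gives the tower
\begin{equation*}
BU(l-2) \longrightarrow \cdots \longrightarrow T_1 F(\mathbb{C}^{l-2}) \longrightarrow T_0 F(\mathbb{C}^{l-2}) \simeq BU,
\end{equation*}
and the zeroth layer $T_0 F \simeq BU$ records exactly the stable (K-theory) class. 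Since $\bb{C}P^l$ has torsion-free cohomology and the total Chern class injects $[\bb{C}P^l, BU] = \widetilde{K}^0(\bb{C}P^l)$ into cohomology, a rank $l-2$ bundle has vanishing Chern classes precisely when its image in $T_0 F(\mathbb{C}^{l-2})$ is null; hence $\Vect^0_{l-2}(\bb{C}P^l)$ is identified with the fiber of $[\bb{C}P^l, BU(l-2)] \to [\bb{C}P^l, BU]$ over the basepoint. Because $\bb{C}P^l$ is $2l$-dimensional and $r = l-2$ is the second case of the metastable range, the connectivity estimates for the tower (as used in the first unstable case) guarantee that only the two layers $D_1 F$ and $D_2 F$ between $T_0 F$ and $BU(l-2)$ can contribute; all higher layers are too highly connected to be detected by a complex of dimension $2l$.

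Next I would identify these two layers and assemble their contributions. Each homogeneous layer $D_n F$ is an infinite loop space whose spectrum is a Thom spectrum of the shape (derivative)$\,\wedge_{hU(n)} S^{n\otimes \mathbb{C}^{l-2}}$; evaluating at $\mathbb{C}^{l-2}$ and taking based maps out of $\bb{C}P^l$ expresses each layer's contribution as a stable (co)homotopy group of $\bb{C}P^l$ smashed with an appropriate Thom complex or stunted projective space, placed in a degree determined by $l$. The fibration sequence $D_2 F \to T_2 F \to T_1 F$ applied to $[\bb{C}P^l, -]$ then yields an exact sequence of pointed sets
\begin{equation*}
A_l \longrightarrow \Vect^0_{l-2}(\bb{C}P^l) \longrightarrow B_l,
\end{equation*}
where $A_l$ comes from the top layer $D_2 F$ and $B_l$ from the obstruction to lifting the $D_1 F$-class. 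The cardinality $\phi(l)$ is then read off from the orders of $A_l$ and $B_l$ together with the connecting map between them.

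I would compute $A_l$ and $B_l$ using the Atiyah--Hirzebruch spectral sequence for the stable cohomotopy of $\bb{C}P^l$ and its stunted quotients, feeding in the stable stems through the $3$-stem. Whereas the first unstable case only involves the $1$-stem $\pi_1^s \cong \mathbb{Z}/2$ (producing the odd/even dichotomy of Theorem \ref{firstunstable}), the second unstable case reaches one cell lower, so that the new attaching maps and differentials land in $\pi_3^s \cong \mathbb{Z}/24$. This, together with the image of the $J$-homomorphism in this range, is precisely what forces the $24$-fold periodicity: the relevant differentials and the order of the top-cell contribution are governed by $e$-invariants and Adams operations depending only on $l \bmod 24$. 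Tabulating these contributions across the twenty-four residue classes should reproduce the stated values of $\phi(l)$, and reducing modulo $2$ should recover the parity behavior of Theorem \ref{firstunstable} as a consistency check.

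The hardest step will be pinning down the connecting map in the exact sequence above and resolving the resulting extension problem. This map measures how a candidate lift across the top cell $S^{2l}$ is obstructed by the image-of-$J$ class in degree $3$, and its exact value---not merely the orders of $A_l$ and $B_l$---is what distinguishes, for instance, $\phi(l) = 12$ from $\phi(l) = 6$, $4$, or $1$. Computing this differential genuinely mod $24$, rather than only bounding it, is where the real work lies, and it is the point at which the second unstable case departs from the first.
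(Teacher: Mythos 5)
Your structural premise is off by one stage, and it changes the shape of the whole argument. For $l \geq 4$ the rank $r = l-2$ still satisfies $\frac{l}{2} \leq r \leq l-1$, so by the connectivity estimate of Corollary~\ref{ctd} the second layer $(L_2F)(\bb{C}^{l-2})$ is $4(l-2)$-connected, and $4(l-2) \geq 2l$ precisely when $l \geq 4$; hence $\Map_*\big{(}\bb{C}P^l, (L_2F)(\bb{C}^{l-2})\big{)}$ is connected and contributes nothing to $\pi_0$, exactly as in the rank $l-1$ case. The Weiss tower therefore again collapses to the single fibration of Theorem~\ref{reduction}, giving $\Vect^0_{l-2}(\bb{C}P^l) \cong \{\bb{C}P^l, \Sigma\bb{C}P^{\infty}_{l-2}\}$ as one abelian group. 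There is no two-layer exact sequence $A_l \to \Vect^0_{l-2}(\bb{C}P^l) \to B_l$, no connecting map between the layers $D_1F$ and $D_2F$, and no extension problem of the kind you single out as the hardest step: ``second case of the metastable range'' means one cell deeper inside a single stable mapping group, not one more stage of the tower. A second layer would only be forced for $r < \frac{l}{2}$, below the metastable range.

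Your instinct that $\pi_3^s \cong \bb{Z}/24$ and the image of $J$ govern the answer is correct, but the $24$-periodicity enters in two different places, neither of which is a layer-to-layer connecting map. First, the coefficients of the Atiyah--Hirzebruch spectral sequence, namely $\pi^s_{i}(\Sigma\bb{C}P^{\infty}_{l-2})$ for $i$ up to $2l$, are themselves periodic in $l$ ($8$-fold at $p=2$, $3$-fold at $p=3$, Lemmas~\ref{sthomotopygeneral} and~\ref{sthomotopygeneral3}); computing them requires minimal Adams resolutions together with an Adams $d_2$ that cannot be settled internally and is resolved by importing Toda's identification $\pi^s_{2n+2t+1}(\bb{C}P^{\infty}_n) = \pi_{2n+2t+1}U(n)$ and Matsunaga's metastable computations \cite{Toda, Matsunaga}. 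Second, after the $d_2$-differentials (controlled by $\Sq^2$ and $\eta$, as in the first unstable case) the crucial AHSS differential is a $d_4$ off the bottom row, induced by the top-cell attaching element $\lambda_l \in \pi_3^s$ of the three-cell complex $\bb{C}P^l/\bb{C}P^{l-3}$; this class is not merely bounded but computed exactly, $8$-periodically as a multiple of $\nu$, by Mosher \cite{Mosher} (Lemma~\ref{AHSSd4}), with the $3$-local part killed or detected by $\mathcal{P}^1$ on $x^{l-2}$. The $24$-fold periodicity then arises by assembling the mod-$8$ $2$-local and mod-$3$ $3$-local answers, not from a single mod-$24$ lifting obstruction. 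So the ``real work'' you identify would never arise as posed; what must actually be pinned down is the AHSS $d_4$ via Mosher together with the varying coefficient groups via Toda--Matsunaga.
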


Broadly speaking, there are two steps to proving these results.  
First,  in Theorem \ref{reduction}  we use
 Weiss calculus to identify stably trivial vector bundles over some $d$-dimensional complex 
 $X$ with $  \{ X, \Sigma \bb{C}P^{\infty}_r\}$, when 
 a bundle has rank $r$  with $\frac{d}{4}\leq r \leq \frac{d-1}{2}$, which we call the {\em metastable range}.  
 Here $\{ X, Y \}$ as usual denotes stable homotopy classes of maps, 
which is the direct limit $\varinjlim [\Sigma^n X, \Sigma^n Y]$, and $\bb{C}P^{\infty}_r$ is the stunted projective space $\bb{C}P^{\infty}/\bb{C}P^{r-1}$.

Weiss calculus is a framework which applies to some spaces given by evaluation of 
 functors on the category of vector spaces, resolving them by a tower of fibrations with infinite loop spaces as fibers. 
Resolution by infinite loop spaces, which are essentially abelian group objects, is a time-honored technique  in homotopy theory, with the Postnikov tower and unstable Adams resolutions being standard examples. The machinery of Weiss calculus gives a custom-made resolution of $BU(r)$ for any $r$, which in the metastable range translates to the stable mapping 
set above.

Stable mapping sets are amenable to standard tools such as Adams and Atiyah-Hirzebruch
spectral sequences, and the second part of our analysis is to employ such tools to make this calculation when $X$ is a complex project space, with needed incorporation of delicate calculations by Mosher \cite{Mosher}, Toda \cite{Toda}, and Matsunaga \cite{Matsunaga}.
Since stable mapping sets are abelian groups, the identification of Theorem~ \ref{reduction} 
equips $\Vect_r^0(X)$ with an abelian group structure in the metastable range.
We  calculate  these groups, which are cyclic of the orders given. We  prefer to present our main results in terms of cardinality, since we don't have an intrinsic description of a group structure on $\Vect_r^0(X)$.  Other open questions invited by our work include finding  representatives for these isomorphism classes of bundles, in particular finding
whether these have holomorphic representatives, as well as finding invariants distinguishing these bundles.  For some of these questions, it
might be helpful to have
a more direct comparison between $\Vect_r^0(X)$ and stable maps to stunted projective spaces.

For perspective on our techniques, recall that rank $r$ bundles over projective spaces with trivial Chern classes are measured by $[\bb{C}P^l, U/U(r)]$, since  vanishing of Chern classes in this case
 implies stable
triviality, and the homogeneous space $U/U(r)$ is the homotopy fiber of the standard map $BU(r)\rightarrow BU$.
One can thus imagine directly calculating $[\bb{C}P^l, U/U(l-1)]$  and $[\bb{C}P^l, U/U(l-2)]$. 
However, these calculations are unstable in nature and for example the unstable Adams spectral sequences computing them are not accessible. 
Weiss calculus enables us to replace the target spaces $U/U(r)$ by infinite loop spaces $Q\Sigma\bb{C}P^{\infty}_r$ (where $Q(-) = \Omega^{\infty}\Sigma^{\infty}(-)$), and hence to extract the desired unstable information from stable calculations.

\subsection{Review of Previous Work}

We first fix notation. 
The cohomology ring of $\bb{C}P^l$ is isomorphic to the truncated polynomial algebra $\bb{Z}[x]/(x^{l+1})$, where $x$ in degree two is identified with the first Chern class of the 
dual of the tautological line bundle. 
The Chern classes of any bundle $\xi$ over $\bb{C}P^l$ are integer multiples of the powers of $x$, and will therefore be treated sometimes as integers.

Classification of topological complex vector bundles is typically organized around $K$-theory and Chern classes, which in special cases give complete information.
When $2r\geq \dim X$,  bundles are stable -- that is, isomorphism is equivalent to stable isomorphism -- and can hence be studied through $K$-theory. 
When $r=1$, line bundles are determined by the first Chern class $c_1$. 
However, when $1<r<\frac{1}{2}\dim X$, rank $r$ bundles over $X$ are  much harder to compute and detect.
For example, the calculation
$$\Vect_2(S^6) \cong [S^6, BU(2)] \cong [S^5, SU(2)] \cong \pi_5(S^3) \cong \bb{Z}/2$$
implies that there is a nontrivial rank 2 bundle over the $6$-sphere, but such a bundle can only have vanishing Chern classes.

Since the classifying spaces $BU(r)$ are rationally formal, there will only be finitely many rank $r$ bundles with a given set of Chern classes. 
We call the counting of this set \emph{Chern enumeration}. One key special case of Chern enumeration is
 finding the cardinality of $\Vect_r^0(X)$, which we call  \emph{vanishing Chern enumeration} question.  Our main results,   Theorems \ref{firstunstable} and  \ref{secondunstable},  resolve the  vanishing Chern enumeration question for complex projective spaces,  in some of the cases in which the Weiss tower for $BU(-)$ collapses to a single fibration -- see Theorem~\ref{reduction}. 

Another key case of Chern enumeration is determining when the number of vector bundles
with fixed Chern classes is non-zero, which we call the \emph{Chern realization question}.
Such results are given by arithmetic conditions on the Chern classes.
For example, Alan Thomas \cite{Thomas} proves that integer pairs $(c_1, c_2)$ can serve as the Chern classes for some stable bundle over $\bb{C}P^l$ with $c_i = 0$ for $i\geq 3$ precisely when the Schwarzenberger condition \cite{Hirzebruch} is satisfied.

Starting with Chern realization results and then applying $K$-theory techniques, Atiyah and Rees \cite{Atiyah-Rees} completely resolve the Chern enumeration question for rank
two bundles over $\bb{C}P^l$ for $l=3$ and $l=4$.

\begin{theorem}[Atiyah-Rees '76 \cite{Atiyah-Rees}]
Let $\xi \downarrow \bb{C}P^3$ be a stable bundle with $c_i(\xi) = 0$, $i\geq 3$.
\begin{enumerate}
	\item If $c_1(\xi)$ is even, then $\xi$ has exactly two rank 2 representatives.
	\item If $c_1(\xi)$ is odd, then $\xi$ has a unique rank 2 representative.
\end{enumerate}
\end{theorem}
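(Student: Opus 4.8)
The plan is to identify the rank $2$ bundles realizing a fixed stable class with the fiber of
$p_* \colon [\bb{C}P^3, BU(2)] \to [\bb{C}P^3, BU] \cong \widetilde{K}^0(\bb{C}P^3)$
over the class of $\xi$, and to study this fiber through the homotopy fibration $U/U(2) \to BU(2) \xrightarrow{p} BU$. First I would record that $U/U(2)$ is $4$-connected with $\pi_5(U/U(2)) \cong \bb{Z}$ and $\pi_6(U/U(2)) \cong \bb{Z}/2$, read off from the long exact sequence of $U(2) \to U \to U/U(2)$. Since $\bb{C}P^3$ has cells only in dimensions $0, 2, 4, 6$ and its $4$-skeleton $\bb{C}P^2$ maps nullhomotopically into the $4$-connected fiber, all ambiguity of a lift is concentrated on the top cell and is controlled by $\pi_6(U/U(2)) \cong \bb{Z}/2$; moreover $\widetilde{K}^1(\bb{C}P^3) = 0$, so the looped base contributes no indeterminacy. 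The cofiber sequence of $\bb{C}P^3 = \bb{C}P^2 \cup_f e^6$ then shows each nonempty fiber of $p_*$ has one or two elements, with nonemptiness guaranteed by the Schwarzenberger--Thomas realizability condition cited above.

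I would next reduce the dependence on $c_1$ to its parity. Tensoring by a line bundle $L$ with $c_1(L) = a$ is a bijection on isomorphism classes sending $c_1 \mapsto c_1 + 2a$ and commuting with stabilization, so it matches the fiber over $\xi$ with the fiber over $\xi \otimes L$; choosing $a$ appropriately reduces to $c_1 \in \{0, 1\}$. For the even case I would take $c_1 = 0$, so that $\det \xi$ is trivial and $\xi$ reduces to an $SU(2) \cong Sp(1)$ bundle; since $[\bb{C}P^3, S^1] = 0$, this reduction is essentially unique, and rank $2$ bundles with $c_1 = 0$ are classified by $[\bb{C}P^3, BSp(1)] = [\bb{C}P^3, \bb{H}P^\infty]$. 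Obstruction theory on $\bb{H}P^\infty$ (with $\pi_4 = \bb{Z}$ recording $c_2$ and $\pi_6(\bb{H}P^\infty) = \pi_5(S^3) = \bb{Z}/2$) shows that for each fixed $c_2$ there are exactly two such bundles, distinguished by the $\bb{Z}/2$ class on the top cell. This $\bb{Z}/2$ is precisely the Atiyah--Rees invariant $\alpha$, and it recovers the count $2$ for every even $c_1$, independently of $c_2$. As a consistency check, the subcase $c_1 = c_2 = 0$ is the stably trivial case, whose fiber is $\Vect_2^0(\bb{C}P^3) \cong \{\bb{C}P^3, \Sigma \bb{C}P^\infty_2\}$ by Theorem~\ref{reduction}, of order $2$ by Theorem~\ref{firstunstable} with $l = 3$.

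The odd case $c_1 = 1$ is where the count drops to one, and here no $Sp(1)$ reduction is available, so I would return to the $U/U(2)$ fibration. The difference of two lifts of the same stable class is a class in $\pi_6(U/U(2)) \cong \bb{Z}/2$, but now twisted by the stable class $\xi$; the goal is to show this twisted difference always vanishes when $c_1$ is odd, so that the two a priori distinct lifts are in fact isomorphic. I would compute this difference as the value of a functional cohomology operation (a functional $\Sq^2$, or equivalently a secondary $K$-theoretic operation) evaluated against the attaching map $f \colon S^5 \to \bb{C}P^2$ of the top cell, and track its dependence on $c_1$.

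I expect the main obstacle to be exactly this parity dependence: relating the purely homotopical $\bb{Z}/2$ on the top cell to the arithmetic parity of $c_1$. The primary operation $\Sq^2 \colon H^4(\bb{C}P^3) \to H^6(\bb{C}P^3)$ vanishes mod $2$, so the parity cannot enter through a primary class and must be extracted from the functional/secondary operation, which is the delicate heart of the argument; one must also verify that $\alpha$ is well defined independently of the chosen quaternionic reduction and is a complete invariant. Everything upstream -- the homotopy of $U/U(2)$ and $\bb{H}P^\infty$, the vanishing of $\widetilde{K}^1(\bb{C}P^3)$, and the tensoring reduction -- is routine by comparison.
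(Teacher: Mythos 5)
First, a point of comparison: the paper does not prove this theorem at all --- it is quoted from Atiyah--Rees, and the paper's own machinery only recovers the special case of vanishing Chern classes. Theorem~\ref{reduction} is an exact sequence of \emph{pointed} sets, so it identifies only the fiber of $[\bb{C}P^3, BU(2)] \to [\bb{C}P^3, BU]$ over the basepoint (the trivial stable class), and Section~3 computes that fiber as $\{\bb{C}P^3, \Sigma\bb{C}P^{\infty}_2\} \cong \bb{Z}/2$, i.e.\ Corollary~\ref{cor3}. Your proposal, which addresses the fibers over \emph{all} stable classes $\xi$, is therefore necessarily outside the paper's argument; it is closer in spirit to the original obstruction-theoretic treatments (Atiyah--Rees, Switzer). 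Only your ``consistency check'' subcase $c_1 = c_2 = 0$ matches anything the paper actually proves.

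Judged on its own terms, the proposal has a genuine gap at exactly the decisive point, and you say so yourself: the odd-$c_1$ uniqueness is deferred as ``the delicate heart of the argument.'' What is missing is a concrete lemma, namely that the indeterminacy subgroup of the top-cell difference group $H^6\big(\bb{C}P^3; \pi_6(U/U(2))\big) \cong \bb{Z}/2$ for lifts of $\xi$ is the image of the \emph{twisted} operation $\Sq^2 + c_1(\xi)\cup\,\colon H^4(\bb{C}P^3;\bb{Z}/2) \to H^6(\bb{C}P^3;\bb{Z}/2)$. Since $\Sq^2(x^2) = 0$ while $c_1 x \cup x^2 = c_1 x^3$, this image is all of $\bb{Z}/2$ precisely when $c_1$ is odd, which is the entire content of the parity dichotomy; without establishing that formula (or an equivalent functional-operation computation) you have a plan, not a proof. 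The same issue infects the even case: ``exactly two for every $c_2$'' requires showing the $\bb{Z}/2$ of top-cell differences in $[\bb{C}P^3, \bb{H}P^{\infty}]$ is not killed by self-homotopies over $\bb{C}P^2$ based at a map with $c_2 \neq 0$. For the stably trivial component this is the paper's computation that $(\Sigma\eta_3)^*$ vanishes (Section~\ref{scor3}), but $\bb{H}P^{\infty}$ is not an H-space and $\pi_5(\bb{H}P^{\infty}) = \bb{Z}/2 \neq 0$, so the twisted indeterminacy at $c_2 \neq 0$ is not automatic and you assert it without argument. Finally, a smaller slip: nonemptiness of the fiber over $\xi$ does not follow from the Schwarzenberger--Thomas condition, which governs which \emph{stable} classes exist, not which ones destabilize to rank 2; the correct statement is that the single obstruction to lifting $\xi$ through $U/U(2) \to BU(2) \to BU$ over a $6$-complex lies in $H^6\big(\bb{C}P^3;\pi_5(U/U(2))\big) \cong \bb{Z}$ and equals $c_3(\xi)$, which vanishes by hypothesis.
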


\begin{theorem}[Atiyah-Rees '76 \cite{Atiyah-Rees}]
Every stable bundle $\xi \downarrow \bb{C}P^4$ with $c_i(\xi) = 0$, $i\geq 3$, contains a unique rank 2 representative.
\end{theorem}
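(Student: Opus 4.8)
The plan is to count the rank $2$ representatives of a fixed stable class $\xi$ by lifting across the fibration $U/U(2) \to BU(2) \xrightarrow{p} BU$, whose fiber is the homotopy fiber of the stabilization map. A rank $2$ bundle realizing $\xi$ is exactly a homotopy class of lift of the classifying map $\xi \colon \bb{C}P^4 \to BU$ through $p$, so the representatives of $\xi$ form the set $p_*^{-1}(\xi) \subseteq [\bb{C}P^4, BU(2)]$, which I would analyze by obstruction theory. The input I need is the low-dimensional homotopy of the fiber, which I would extract from the fibration $U(2) \to U \to U/U(2)$: using $\pi_n(U(2)) \cong \pi_n(S^3)$ for $n \geq 2$ together with Bott periodicity and Toda's tables \cite{Toda}, one finds that $U/U(2)$ is $4$-connected with $\pi_5 \cong \bb{Z}$, $\pi_6 \cong \bb{Z}/2$, and $\pi_8 \cong \bb{Z}/2$. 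In particular every map $\bb{C}P^4 \to U/U(2)$ factors through the stunted space $\bb{C}P^4/\bb{C}P^2$, whose only cells lie in dimensions $6$ and $8$.

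Granting realizability of the pairs $(c_1,c_2)$ that occur --- which follows from the Schwarzenberger/Thomas conditions \cite{Thomas, Hirzebruch} --- existence of a lift is automatic, so the theorem becomes a uniqueness (rigidity) statement. The differences between two lifts are filtered by the obstruction groups $H^6(\bb{C}P^4; \pi_6(U/U(2))) \cong \bb{Z}/2$ and $H^8(\bb{C}P^4; \pi_8(U/U(2))) \cong \bb{Z}/2$, so a priori the representative set has at most four elements. I would make this precise by computing $[\bb{C}P^4/\bb{C}P^2, U/U(2)]$ from the Puppe sequence of the stunted space, whose top cell is attached by $\eta$ (equivalently, $\Sq^2$ acts nontrivially across the two cells). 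This presents the group as an extension of $\Ker(\cdot\eta \colon \pi_6(U/U(2)) \to \pi_7(U/U(2)))$ by $\Coker(\cdot\eta \colon \pi_7(U/U(2)) \to \pi_8(U/U(2)))$. Evaluating these $\eta$-multiplications is exactly the kind of delicate calculation recorded by Mosher \cite{Mosher} and Matsunaga \cite{Matsunaga}; I expect $\cdot\eta$ into $\pi_8$ to be onto, collapsing the degree-$8$ summand and leaving at most the degree-$6$ class $\bb{Z}/2$.

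The crux --- and the step I expect to be hardest --- is that the count of lifts of a \emph{specific} stable class is genuinely twisted rather than principal, so the surviving $\bb{Z}/2$ need not act freely, and naively comparing with $p_*^{-1}(0)$ is not legitimate because $BU(2)\to BU$ is not an $H$-map. This twist is already visible one dimension down: over $\bb{C}P^3$ the same group $\bb{Z}/2 \cong \pi_6(U/U(2)) \cong [\bb{C}P^3, U/U(2)]$ is the Atiyah--Rees invariant, and it produces two representatives when $c_1$ is even but only one when $c_1$ is odd. To prove the $\bb{C}P^4$ statement I must therefore show that the extra $8$-cell always supplies a vertical homotopy trivializing the degree-$6$ difference, for \emph{every} realizable $(c_1,c_2)$. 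I would pin this down $K$-theoretically, in the style of \cite{Atiyah-Rees}: express the $\bb{Z}/2$ invariant through the action of the Adams operations $\psi^k$ on $\widetilde{K}(\bb{C}P^4)$ and its interaction with the real and symplectic reductions, and show that on $\bb{C}P^4$ --- unlike on $\bb{C}P^3$ --- the relevant operation always hits the invariant, forcing $p_*^{-1}(\xi)$ to be a single point. Establishing this $K$-theoretic rigidity, and verifying it uniformly in $(c_1,c_2)$, is the main obstacle.
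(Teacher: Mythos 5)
Your proposal does not close, and the gap is the one you yourself flag at the end. The entire content of the Atiyah--Rees theorem for $\bb{C}P^4$ is the behavior of the lifting problem at \emph{nonzero} $(c_1,c_2)$: as you correctly observe, $p_*^{-1}(\xi)$ is not a principal homogeneous set over $[\bb{C}P^4, U/U(2)]$ because $BU(2)\to BU$ is not principal, so computing that group only settles the stably trivial class. Announcing that you would resolve the twist ``$K$-theoretically, in the style of \cite{Atiyah-Rees}, via Adams operations'' and that this is ``the main obstacle'' is a restatement of the theorem, not a proof of it; no mechanism is given for why the degree-$6$ difference class dies uniformly in $(c_1,c_2)$ on $\bb{C}P^4$ when it demonstrably does not on $\bb{C}P^3$. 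The existence half is also glossed: Thomas's theorem \cite{Thomas} realizes Schwarzenberger-admissible pairs $(c_1,c_2)$ by \emph{stable} bundles with $c_i=0$, $i\geq 3$, which is your hypothesis, not your conclusion. Since $\widetilde{K}(\bb{C}P^4)$ is torsion-free and stable classes are determined by Chern classes, ``$\xi$ has a rank $2$ representative'' is equivalent to realizing $(c_1,c_2)$ by an honest rank $2$ bundle on $\bb{C}P^4$, and the secondary obstruction to such a lift lives in $H^8\big{(}\bb{C}P^4; \pi_7(U/U(2))\big{)}$, which is nonzero; so existence is genuinely part of what must be proved, not ``automatic.''

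For comparison: the paper does not reprove this theorem either --- it is quoted from \cite{Atiyah-Rees} --- and what the paper proves independently is only the vanishing-Chern special case (Corollary \ref{cor4}), by a different route: the Weiss tower of $BU(-)$ collapses in the metastable range (Theorem \ref{reduction}), giving $\Vect_2^0(\bb{C}P^4)\cong \{\bb{C}P^4, \Sigma\bb{C}P^{\infty}_2\}$, which is then shown to vanish by the Adams spectral sequence plus the AHSS. It is worth noting that your untwisted count comes out cleaner than you guessed: in the paper's Adams chart for $\Sigma\bb{C}P^{\infty}_2$ the classes at $(t-s,s)=(6,1)$ and $(7,2)$ are joined by $h_1$, so multiplication by $\eta\colon \pi_6 \to \pi_7$ is \emph{injective}, and your kernel term $\Ker(\cdot\eta)$ vanishes along with the cokernel term; hence $[\bb{C}P^4, U/U(2)] = 0$ outright, consistent with $\{\bb{C}P^4, \Sigma\bb{C}P^{\infty}_2\} = 0$, with no surviving $\bb{Z}/2$ to worry about in the stably trivial case. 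But since that case is the only one your outline actually computes, the statement as given --- uniqueness of the rank $2$ representative for \emph{every} stable $\xi$ with $c_3 = c_4 = 0$ --- remains unproven in your proposal; the Adams-operation rigidity argument you defer is precisely Atiyah and Rees's contribution.
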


As special cases of these theorems, one obtains vanishing Chern enumeration results. We will give alternate
proofs of these in Section 3, as illustrative first cases of our more general results.

\begin{corollary} \label{cor3}
Over $\bb{C}P^3$ there is a unique nontrivial rank 2 bundle with vanishing Chern classes. 
\end{corollary}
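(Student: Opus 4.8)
The statement sits directly beneath the first Atiyah--Rees theorem, and the cleanest route is to deduce it as a special case. The plan is first to observe that a rank $2$ bundle over $\bb{C}P^3$ has only the Chern classes $c_1$ and $c_2$, so ``vanishing Chern classes'' means precisely $c_1 = c_2 = 0$. Since over $\bb{C}P^l$ a stable class is detected by its Chern classes, such a bundle is stably trivial; conversely every stably trivial rank $2$ bundle has vanishing Chern classes. Hence $\Vect_2^0(\bb{C}P^3)$ is exactly the set of rank $2$ representatives of the trivial stable class, i.e.\ of the stable bundle $\xi_0$ with $c_1(\xi_0)=c_2(\xi_0)=0$ and $c_i(\xi_0)=0$ for $i\geq 3$.

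I would then apply part (1) of the Atiyah--Rees theorem to $\xi_0$. Its first Chern class $c_1(\xi_0)=0$ is even, so $\xi_0$ admits exactly two rank $2$ representatives. The key bookkeeping step is to notice that the trivial bundle $\underline{\bb{C}}^2$ is one of these two representatives: it is stably trivial and has vanishing Chern classes, hence lies in $\Vect_2^0(\bb{C}P^3)$. The remaining representative is therefore nontrivial and unique, which is exactly the assertion that there is a unique nontrivial rank $2$ bundle over $\bb{C}P^3$ with vanishing Chern classes.

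For perspective, the same count will reappear in Section~3 through the Weiss-calculus machinery, and it is worth previewing that argument. One checks that $(\dim \bb{C}P^3, r) = (6,2)$ lies in the metastable range $\tfrac{d}{4}\leq r\leq \tfrac{d-1}{2}$, so Theorem~\ref{reduction} identifies $\Vect_2^0(\bb{C}P^3)$ with the stable mapping group $\{\bb{C}P^3,\Sigma\bb{C}P^{\infty}_2\}$. Since $\bb{C}P^3$ is $6$-dimensional one may replace the target by its $7$-skeleton $\Sigma(\bb{C}P^3/\bb{C}P^1)$, and because $\Sq^2(x^2)=2x^3=0$ the relevant attaching map is null, giving a stable splitting $\bb{C}P^3/\bb{C}P^1\simeq S^4\vee S^6$. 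The group then reduces to $\{\bb{C}P^3,S^5\}\oplus\{\bb{C}P^3,S^7\}$, and a short Atiyah--Hirzebruch computation yields $\bb{Z}/2\oplus 0\cong\bb{Z}/2$, consistent with $\psi(3)=2$ from Theorem~\ref{firstunstable}.

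The elementary route has essentially no computational content; the only place to be careful is the conventions in the first two paragraphs --- that vanishing Chern classes is equivalent to being a rank $2$ representative of the trivial stable class, and that the trivial bundle is one of the two Atiyah--Rees representatives, so that subtracting it leaves exactly one nontrivial bundle. In the Weiss-calculus route the main obstacle is instead the stable splitting: one must verify that the top-cell attaching map vanishes so that the top cell contributes nothing, leaving the order-$2$ group detected on the $5$-dimensional stable cohomotopy of $\bb{C}P^3$.
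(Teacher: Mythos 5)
Your proof is correct, but it takes a genuinely different route from the paper's. Your main argument --- applying part (1) of the Atiyah--Rees theorem to the zero stable class, noting that $c_1=0$ is even and that the trivial bundle is one of the two rank-$2$ representatives --- is precisely the deduction the paper gestures at when it calls this statement a corollary of Atiyah--Rees, and your bookkeeping (vanishing Chern classes $\Leftrightarrow$ stable triviality over $\bb{C}P^3$, since the stable class is detected by Chern classes) is sound. The paper's own proof in Section 3 is instead the Weiss-calculus computation $\{\bb{C}P^3,\Sigma\bb{C}P^{\infty}_2\}\cong\bb{Z}/2$, carried out by computing $\pi_i^s(\Sigma\bb{C}P^{\infty}_2)$ for $i\leq 8$ via a minimal Adams resolution (Lemma \ref{sthomotopy}) and then chasing the long exact sequences of the cofibrations $S^3\xrightarrow{\eta}S^2\to\bb{C}P^2$ and $S^5\xrightarrow{\eta_3}\bb{C}P^2\to\bb{C}P^3$, the crux being that $q_2\circ\eta_3$ is null because $l=3$ is odd. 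Your sketch of the stable computation differs in a substantive way: you truncate the target to its $7$-skeleton $\Sigma(\bb{C}P^3/\bb{C}P^1)$ (legitimate since $\bb{C}P^3$ is $6$-dimensional, so cells of dimension $\geq 8$ in the target are invisible) and split it stably as $S^5\vee S^7$ using $\Sq^2 x^2=0$, reducing everything to stable cohomotopy of $\bb{C}P^3$ with sphere targets, so that only $\pi_0^s$ and $\pi_1^s$ of spheres are needed and the Adams spectral sequence is avoided entirely. Note that both arguments ultimately pivot on the same fact --- $\bb{C}P^3/\bb{C}P^1\simeq S^4\vee S^6$ because $\Sq^2$ vanishes on $x^2$ --- which you use to split the target while the paper uses it to kill the connecting homomorphism $(\Sigma\eta_3)^*\colon\bb{Z}\to\bb{Z}/2$ in the source. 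One small omission in your sketch: in the AHSS for $\{\bb{C}P^3,S^5\}$ there is a potential differential $d_2\colon H^4(\bb{C}P^3;\pi_0^s)\to H^6(\bb{C}P^3;\pi_1^s)$ that must be checked before concluding $\bb{Z}/2$; it vanishes by the same $\Sq^2 x^2=0$, so no harm is done. As for what each route buys: yours is shorter and more elementary for this case, but the skeletal-splitting shortcut is special to odd $l$ and low dimensions, whereas the paper's method --- Adams computation of the coefficients followed by cofiber or AHSS analysis --- is the one that scales to Theorems \ref{firstunstable} and \ref{secondunstable}, where the relevant stunted skeleta do not split.
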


\begin{corollary} \label{cor4}
Over $\bb{C}P^4$ there is no nontrivial rank 2 bundle with vanishing Chern classes. 
\end{corollary}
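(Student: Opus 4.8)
The plan is to convert the geometric question into a single stable homotopy computation via Theorem~\ref{reduction}, and then to show the resulting group is trivial. Over $\bb{C}P^4$ the reduced $K$-theory is torsion-free and detected by Chern classes, so a rank $2$ bundle with $c_1=c_2=0$ is stably trivial; hence $\Vect_2^0(\bb{C}P^4)$ is exactly the set of stably trivial rank $2$ bundles. Since $r=2$ lies in the metastable range for $d=\dim_{\bb{R}}\bb{C}P^4=8$ (indeed $d/4=2\le 2\le 7/2$), Theorem~\ref{reduction} gives an isomorphism of abelian groups $\Vect_2^0(\bb{C}P^4)\cong\{\bb{C}P^4,\Sigma\bb{C}P^\infty_2\}$. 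It therefore suffices to prove that this stable mapping group vanishes, and that is what I would carry out.

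First I would assemble the coefficient input, the groups $\pi^s_*(\Sigma\bb{C}P^\infty_2)=\pi^s_{*-1}(\bb{C}P^\infty_2)$ coming from the cells of $\bb{C}P^\infty_2$ in dimensions $4,6,8,\dots$. In the range that is needed one finds $\pi^s_5(\bb{C}P^\infty_2)\cong\bb{Z}/2$ (the class $\eta$ on the bottom cell $S^4$) and $\pi^s_7(\bb{C}P^\infty_2)\cong\bb{Z}/24$ (the class $\nu$ on $S^4$); here the $\eta$ off the $6$-cell is annihilated by the attaching map of the $8$-cell, which is visible through $\Sq^2 x^3=x^4$ on $\bb{C}P^\infty_2$. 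I would then run the Atiyah--Hirzebruch spectral sequence built from the cell structure of the source $\bb{C}P^4$ (cells in dimensions $2,4,6,8$) with these coefficients, converging to $\{\bb{C}P^4,\Sigma\bb{C}P^\infty_2\}$. On the degree-zero diagonal only two groups reach $E_2$: a $\bb{Z}/2$ contributed by the $6$-cell and a $\bb{Z}/24$ contributed by the $8$-cell.

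The heart of the argument is to show that both of these die. I would exploit the feature (absent for $\bb{C}P^3$, which is precisely why that case leaves a surviving $\bb{Z}/2$ and gives Corollary~\ref{cor3}) that $\bb{C}P^4$ carries a top cell in dimension $8$ on which the relevant Steenrod operations are nonzero: $\Sq^2 x^3=x^4$, $\Sq^4 x^2=x^4$, and, at the prime $3$, $P^1 x^2=2x^4\neq 0$. The nonzero $\Sq^2\colon H^6\to H^8$ supports a $d_2$ out of the $\bb{Z}/2$, killing it. The $2$-primary part of the $\bb{Z}/24$ is then attacked by the differential governed by $\Sq^4\colon H^4\to H^8$ (detecting $\nu$), and its $3$-primary part by the first odd-primary differential governed by $P^1\colon H^4\to H^8$ (detecting $\alpha_1$). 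Equivalently, in the cofiber sequence $\bb{C}P^3\to\bb{C}P^4\to S^8$ the attaching map of the top cell realizes a generator of $\pi^s_3\cong\bb{Z}/24$ after mapping to $\Sigma\bb{C}P^\infty_2$, so that $\{S^8,\Sigma\bb{C}P^\infty_2\}\cong\bb{Z}/24$ lies entirely in the image of the connecting map while the nontrivial $\bb{C}P^3$-class fails to extend over the $8$-cell. Either way $E_\infty=0$ and the group vanishes.

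I expect the main obstacle to be verifying that these operations account for the \emph{entire} order $24$, rather than merely a $\bb{Z}/2$ or a proper summand. A primary $\Sq^4$ only sees order-$2$ information, so pinning down the full $2$-primary $\bb{Z}/8$ generated by $\nu$ requires the precise integral attaching-map data of $\bb{C}P^4$ (equivalently the $e$-invariant and image-of-$J$ behaviour), together with the exclusion of hidden extensions and of surviving higher classes. This is exactly the point at which I would bring in the explicit stable homotopy of stunted complex projective spaces computed by Mosher, Toda, and Matsunaga, or run a small Adams spectral sequence at $p=2$ and $p=3$, to certify that nothing survives and conclude $\{\bb{C}P^4,\Sigma\bb{C}P^\infty_2\}=0$.
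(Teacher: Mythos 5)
Your overall route is the same as the paper's (Theorem~\ref{reduction} followed by an Atiyah--Hirzebruch computation of $\{\bb{C}P^4,\Sigma\bb{C}P^{\infty}_2\}$), but there is a genuine error in your coefficient input: $\pi_7^s(\bb{C}P^{\infty}_2)$ is \emph{not} $\bb{Z}/24$; it is $\bb{Z}/2$, i.e.\ $\pi_8^s(\Sigma\bb{C}P^{\infty}_2)\cong\bb{Z}/2$ as in Lemma~\ref{sthomotopy}. You correctly observed that the $8$-cell of the target kills the $\eta$ hanging off the $6$-cell (via $\Sq^2x^3=x^4$), but you overlooked that the very same $8$-cell also attaches to the bottom cell: since $\Sq^4x^2=x^4$ and $\cal{P}^1x^2=2x^4\neq 0$, the attaching map $S^7\to \bb{C}P^3/\bb{C}P^1\simeq S^4\vee S^6$ has $S^4$-component an odd multiple of $\nu$ ($2$-locally) and a unit multiple of $\alpha_1$ ($3$-locally). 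Consequently the boundary map $\pi_8^s(\bb{C}P^{\infty}_3)\to\pi_7^s(S^4)\cong\bb{Z}/24$ has image of index $2$: two-locally $\bb{C}P^4_3=S^6\cup_{\eta}e^8$ only carries a lift of $2\iota_8$, so the boundary hits $2\nu$ up to odd multiples, while three-locally the top cell splits off and the boundary kills the entire $\bb{Z}/3$. Only a $\bb{Z}/2$ generated by $i_*\nu$ survives --- on the Adams chart this is the single dot at $(t-s,s)=(8,1)$ joined to the bottom class by $h_2$. In effect you computed $\pi_8^s(S^5)$, forgetting to impose the cell structure of the target on the coefficients.

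The consequence is that the ``main obstacle'' you single out --- certifying that the differentials account for the entire order $24$ --- is a phantom created by this mistake: at $E_2^{8,-8}$ there is only a $\bb{Z}/2$, and three-locally there is nothing at all to kill (Lemma~\ref{sthomotopygeneral3} gives $\pi_8^s(\Sigma\bb{C}P^{\infty}_2)_{(3)}=0$, precisely because $\cal{P}^1$ acts nontrivially inside the target). Your differential bookkeeping also leans on a group you never compute, $\pi_7^s(\Sigma\bb{C}P^{\infty}_2)\cong\bb{Z}\oplus\bb{Z}/2$: the paper kills $E_2^{8,-8}$ with the $d_2$ out of $E_2^{6,-7}$, namely multiplication by $\eta$ on the $\bb{Z}$-summand, read off the $h_1$-line from $(7,0)$ to $(8,1)$; no $d_4$ and no input from Mosher is needed for $\bb{C}P^4$ (Lemma~\ref{AHSSd4} enters only in the rank $l-2$ case, though with the corrected coefficient your $d_4$ via $\lambda_4=\nu$ would indeed also suffice). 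Similarly, killing $E_2^{6,-6}$ requires not merely $\Sq^2\neq 0$ on the source but injectivity of $\eta\colon\pi_6^s(\Sigma\bb{C}P^{\infty}_2)\to\pi_7^s(\Sigma\bb{C}P^{\infty}_2)$, which the paper reads off the $h_1$-line from $(6,1)$ to $(7,2)$. Your proposed fallback --- running the Adams spectral sequence at $p=2$ and $p=3$, exactly the paper's Appendix computation --- would surface and repair the error, but as written the proof rests on a false coefficient group and misdirects its main effort at a difficulty that does not exist.
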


It is later proved, both by Rees \cite{Rees} and by Smith \cite{Smith}, that for every $l\geq 5$ there exists some nontrivial rank 2 bundle over $\bb{C}P^l$ with vainishing Chern classes, so that vanishing Chern enumeration is nontrivial.
Using obstruction-theoretic techniques, Switzer \cite{Switzer} gives alternative proofs of the Atiyah-Rees theorems and goes further to resolve Chern enumeration for rank two bundles over $\bb{C}P^5$ and $\bb{C}P^6$. 

Chern enumeration over projective spaces remains a mystery in general, but is advancing not only in the 
present work but also in \cite{Opie}. In this work, Opie uses the theory of topological modular forms
to not only fully resolve Chern enumeration for rank 3 bundles over $\bb{C}P^5$ but also find invariants that distinguish bundles having the same Chern classes.  Moreover, Opie concretely constructs interesting rank 3 bundles over $\bb{C}P^5$.

\subsection{Plan of the Paper} 
In Section 2 we give a brief account of Weiss calculus.
The classifying space $BU(r)$ is the  value of the functor $BU(-): V\mapsto BU(V)$ at the standard vector space $\bb{C}^r$. 
To detect rank $r$ bundles over $\bb{C}P^l$ we map $\bb{C}P^l$ to the Weiss tower of $BU(-)$ evaluated at $V = \bb{C}^r$.
The Weiss tower of $BU(-)$ is studied in detail by Arone \cite{Arone02}. We shall make use of the description of the first layer and the cohomology calculation of the general layers to prove our identification $\Vect^0_r(\bb{C}P^l)\cong \{\bb{C}P^l, \Sigma \bb{C}P^{\infty}_r\}$ in the metastable range. 

For those less familiar with stable homotopy techniques, in Section 3 we give illustrative examples of our main results,
presenting new proofs of Corollaries \ref{cor3} and \ref{cor4}, which enumerate  $\Vect_2^0(\bb{C}P^3)$ and $\Vect_2^0(\bb{C}P^4)$.
After our stable map identification theorem, these results are proved by showing that 
$\{\bb{C}P^3, \Sigma\bb{C}P^{\infty}_2\} \cong \bb{Z}/2$ and $\{\bb{C}P^4, \Sigma\bb{C}P^{\infty}_2\} = 0$.
To do so we apply the 2-primary Adams spectral sequence to compute some first  stable homotopy groups of $\Sigma\bb{C}P^{\infty}_2$, and consider these as coefficients of the cohomology theory represented by the infinite loop space $Q\Sigma\bb{C}P^{\infty}_2$. 
The computation is then finished with an Atiyah-Hirzerbruch spectral sequence argument.

Section 4 is dedicated to the proof of our main results, Theorems \ref{firstunstable} and \ref{secondunstable}, following the strategy of Section 3. 
Rank $(l-1)$ and $(l-2)$ bundles over $\bb{C}P^l$ are in the metastable range, and the associated Weiss tower 
yields identifications 
$$\Vect_{l-1}^0(\bb{C}P^l) \cong \{\bb{C}P^l, \Sigma\bb{C}P^{\infty}_{l-1}\} \quad \text{and} \quad \Vect_{l-2}^0(\bb{C}P^l) \cong \{\bb{C}P^l, \Sigma\bb{C}P^{\infty}_{l-2}\}. $$
In the first  case, Theorem \ref{firstunstable}, we apply the 2-primary Adams spectral sequence to compute some first few stable homotopy groups of $\Sigma\bb{C}P^{\infty}_{l-1}$ (with some details presented in the Appendix), and then organize the computations of $\{\bb{C}P^l, \Sigma\bb{C}P^{\infty}_{l-1}\}$ with the Atiyah-Hirzebruch spectral sequence. 
The proof of Theorem \ref{secondunstable} has similar ingredients, but with some added complexity for two reasons. First, both the prime 2 and the prime 3 are involved. Secondly, more detailed study of 3-cell stunted projective spaces is required in order to determine a crucial $d_4$-differential in the Atiyah-Hirzebruch spectral sequence, where we make use of results of Mosher \cite{Mosher}.

\section*{Acknowledgments}

The author would like to thank his advisor Dev Sinha for his enormous amount of support in this research work, 
Morgan Opie for sharing her thesis which helps the author discover an error in an earlier version of this paper, 
Robert Bruner for running his computer code to help us double check our hand-calculated Ext-charts, and 
Tom Goodwillie and Michael Weiss for helpful correspondence.

\section{Weiss Calculus and the Metastable Exact Sequences}

We apply Weiss calculus to equate the vanishing Chern enumeration problem over complex projective spaces with the calculation
of a stable homotopy mapping set, in the metastable range. 
The following simplification of the Weiss tower in the case at hand is key to our metastable computations.

\begin{theorem}\label{reduction}
Let $l> 2$ be an integer, and let $r$ be an integer with $\frac{l}{2}\leq r \leq l-1$. 
Then the Weiss tower of the functor $V\longmapsto \Map_*\big{(}\bb{C}P^l, BU(V)\big{)}$
collapses to a single fibration after evaluating at $V=\bb{C}^r$, and there is the following exact sequence
$$0\longrightarrow \{ \bb{C}P^l, \Sigma \bb{C}P^{\infty}_r \} \longrightarrow [\bb{C}P^l, BU(r)] \longrightarrow [\bb{C}P^l, BU],$$
which we call the metastable exact sequence. It then follows that
$$\Vect_r^0(\bb{C}P^l) \cong \{ \bb{C}P^l, \Sigma \bb{C}P^{\infty}_r \}.$$
\end{theorem}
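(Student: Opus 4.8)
The plan is to run the unitary Weiss tower of the functor $G(V) := \Map_*\big{(}\bb{C}P^l, BU(V)\big{)}$ and to show that, after evaluating at $V=\bb{C}^r$, every layer above the first becomes so highly connected in the metastable range that the tower degenerates to a single fibration whose fiber produces the stable mapping group. The first step is formal: because $\bb{C}P^l$ is a finite complex, the cotensor $\Map_*(\bb{C}P^l,-)$ commutes with the homotopy limits that define the polynomial approximations $T_nF$ and their homogeneous layers $D_nF$. Hence the Weiss tower of $G$ is obtained levelwise from that of $BU(-)$, namely $T_nG(V)\simeq \Map_*(\bb{C}P^l, T_nBU(V))$ and $D_nG(V)\simeq \Map_*(\bb{C}P^l, D_nBU(V))$. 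In particular the bottom stage is
$$T_0G(\bb{C}^r) \simeq \Map_*\big{(}\bb{C}P^l, \hocolim_W BU(\bb{C}^r\oplus W)\big{)} \simeq \Map_*(\bb{C}P^l, BU),$$
so the bottom map of the tower is exactly the stabilization $[\bb{C}P^l, BU(r)]\to[\bb{C}P^l, BU]$.

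Next I would import Arone's computation of the Weiss tower of $BU(-)$ from \cite{Arone02}. Its linear layer is $D_1BU(\bb{C}^r)\simeq \Omega^{\infty}\Sigma^{\infty}\Sigma\bb{C}P^{\infty}_r$, arising because the $U(1)$--homotopy orbits of the representation sphere $S^{\bb{C}^r}$ form the Thom spectrum of $r$ copies of the tautological line bundle over $BU(1)=\bb{C}P^{\infty}$, i.e.\ $\Sigma^{\infty}\bb{C}P^{\infty}_r$. Consequently
$$D_1G(\bb{C}^r)\simeq \Omega^{\infty}F\big{(}\Sigma^{\infty}\bb{C}P^l,\, \Sigma^{\infty}\Sigma\bb{C}P^{\infty}_r\big{)}, \qquad \pi_0 D_1G(\bb{C}^r) = \{\bb{C}P^l, \Sigma\bb{C}P^{\infty}_r\}.$$
The quantitative heart of the argument is the connectivity of the higher layers: smashing with the representation sphere $S^{\bb{C}^n\otimes\bb{C}^r}=S^{2nr}$ raises the bottom dimension of $D_nBU(\bb{C}^r)$ to the order of $2nr$, and Arone's cohomology calculation of the general layers pins it down, showing that for $n\geq 2$ the layer $D_nBU(\bb{C}^r)$ has no reduced cohomology in degrees $\leq 2l+1$ precisely when $r\geq \tfrac{l}{2}$, with the binding case being $n=2$, $r=\tfrac{l}{2}$. \textbf{This sharp connectivity bound is the main obstacle}: everything rests on the layers being at least $(2l+1)$--connected so that the $2l$--dimensional complex $\bb{C}P^l$ sees nothing in them, and verifying that the metastable hypothesis $r\geq \tfrac{l}{2}$ is exactly the threshold at the left endpoint of the range is the delicate point.

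Granting this, for $n\geq 2$ the mapping spaces $\Map_*(\bb{C}P^l, D_nBU(\bb{C}^r))$ have vanishing $\pi_0$ and $\pi_1$, since by the Atiyah--Hirzebruch spectral sequence $\pi_j\Omega^{\infty}F(\Sigma^{\infty}\bb{C}P^l, E)=\widetilde{E}^{-j}(\bb{C}P^l)=0$ for $j=0,1$ when $E$ is $(2l+1)$--connected and $\bb{C}P^l$ has cells only up to dimension $2l$. Thus $\hofib\big{(}G(\bb{C}^r)\to T_1G(\bb{C}^r)\big{)}$ is $1$--connected, the tower collapses to the single fibration
$$D_1G(\bb{C}^r) \longrightarrow \Map_*(\bb{C}P^l, BU(r)) \longrightarrow \Map_*(\bb{C}P^l, BU),$$
and I would read off its long exact sequence near the bottom. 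Because $\pi_1\Map_*(\bb{C}P^l, BU)=[\bb{C}P^l, U]=\widetilde{K}^{1}(\bb{C}P^l)=0$ (odd $K$--theory of $\bb{C}P^l$ vanishes, having only even cells), the connecting map into $\pi_0 D_1G(\bb{C}^r)$ is zero, yielding the left--exact metastable sequence
$$0\longrightarrow \{\bb{C}P^l, \Sigma\bb{C}P^{\infty}_r\}\longrightarrow [\bb{C}P^l, BU(r)] \longrightarrow [\bb{C}P^l, BU].$$

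Finally I would identify the fiber over the basepoint of the stabilization map with $\Vect_r^0(\bb{C}P^l)$. A rank $r$ bundle maps to $0$ in $[\bb{C}P^l, BU]=\widetilde{K}^0(\bb{C}P^l)$ if and only if it is stably trivial; and since $\widetilde{K}^0(\bb{C}P^l)\cong\bb{Z}^l$ is torsion--free, the Chern character (hence the collection of Chern classes) is injective on it, so stable triviality is equivalent to the vanishing of all Chern classes. Therefore $\Vect_r^0(\bb{C}P^l)$ is exactly this basepoint fiber, which by the exact sequence above (using $\widetilde{K}^1(\bb{C}P^l)=0$ for the injectivity) is in bijection with $\pi_0 D_1G(\bb{C}^r)=\{\bb{C}P^l,\Sigma\bb{C}P^{\infty}_r\}$. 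This gives $\Vect_r^0(\bb{C}P^l)\cong\{\bb{C}P^l,\Sigma\bb{C}P^{\infty}_r\}$ and, as a byproduct, transports the abelian group structure of the stable mapping set onto $\Vect_r^0(\bb{C}P^l)$.
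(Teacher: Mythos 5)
Your proposal follows essentially the same route as the paper's proof: commute $\Map_*(\bb{C}P^l,-)$ past the Weiss tower of $BU(-)$ (legitimate since the tower is built from homotopy limits and filtered homotopy colimits and $\bb{C}P^l$ is finite), identify the linear layer at $V=\bb{C}^r$ with $Q\Sigma\bb{C}P^{\infty}_r$ via the Thom space of $\gamma^{\oplus r}$ over $\bb{C}P^{\infty}$, kill the higher layers with Arone's connectivity estimate, and read off the exact sequence, finishing by equating stable triviality with vanishing of Chern classes. Two of your steps are in fact \emph{more} explicit than the paper's: you justify the left-exactness (the ``$0\to$'') by computing $\pi_1\Map_*(\bb{C}P^l,BU)\cong \widetilde{K}^1(\bb{C}P^l)=0$, and you justify ``stably trivial $=$ vanishing Chern classes'' via torsion-freeness of $\widetilde{K}^0(\bb{C}P^l)$; the paper asserts both points without comment, so this added care is welcome.

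There is, however, one quantitative slip at exactly the point you flag as the main obstacle. Arone's computation places the bottom reduced cohomology of the $p^k$-th layer at degree $1+2p^k r$, so for $n\geq 2$ the layer $(L_nF)(\bb{C}^r)$ is $4r$-connected; under $r\geq \tfrac{l}{2}$ this gives $4r\geq 2l$, i.e.\ $2l$-connectivity, \emph{not} the $(2l{+}1)$-connectivity you claim. At the binding case ($l$ even, $r=\tfrac{l}{2}$) the second layer can have cohomology exactly in degree $2l+1$, so your assertions that $\pi_1\Map_*\big{(}\bb{C}P^l,(L_nF)(\bb{C}^r)\big{)}=0$ and that the homotopy fiber of $G(\bb{C}^r)\to T_1G(\bb{C}^r)$ is $1$-connected fail there; that fiber need only be $0$-connected. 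This does not sink the argument: as in the paper, connectedness ($\pi_0$-vanishing) of the mapping spaces into the higher layers already stabilizes the components of the tower after the first stage (using that each stage $T_n\to T_{n-1}$ sits in Weiss's delooped fibration sequence, whose delooping is one degree more connected), and the injectivity in the metastable sequence comes from $\pi_1$ of the \emph{base} $\Map_*(\bb{C}P^l,BU)$, which you correctly showed vanishes. So your proof is repaired simply by replacing ``$(2l{+}1)$-connected'' with ``$2l$-connected'' and dropping the unneeded $\pi_1$-vanishing claim for the higher layers.
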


To prove Theorem~\ref{reduction}, we begin with a brief account of Weiss calculus.

\subsection{Fundamentals of Weiss Calculus}
Weiss initiated the study of his calculus, inspired by Goodwillie calculus, in  \cite{Weiss}.  There he
focuses on orthogonal calculus, but we apply unitary calculus here.
Let $\cal{J}$ be the category whose objects are finite dimensional complex vector spaces with positive definite inner product (all in a fixed universe $\bb{C}^{\infty}$), and whose morphisms are linear isometric inclusions. We consider $\cal{J}$ as a topological category since its morphism sets are Stiefel manifolds. 
Weiss calculus studies continuous functors from $\cal{J}$ to pointed spaces. 

For each $n\geq 0$ there is a distinguished class of {\em $n$-polynomial} functors, and the idea of calculus is to approximate a general functor by these polynomial ones -- analogous to the philosophy of classical calculus. Each continuous functor $F: \cal{J} \rightarrow \Top_*$ is equipped with a tower of fibrations
$$\cdots \longrightarrow T_nF \longrightarrow T_{n-1}F \longrightarrow \cdots \longrightarrow T_1F \longrightarrow T_0F$$
where $T_nF$ is $n$-polynomial, called the Weiss tower of $F$. 
For each $n$ there is a comparison map $F\rightarrow T_nF$ compatible with the tower, regarded as the universal approximation of $F$ by an $n$-polynomial functor.
We say the tower converges to $F$ if $F \rightarrow \holim_n T_nF$ is a weak equivalence.
With appropriate connectivity conditions, some towers terminate after finitely many steps.

The homotopy fiber $L_nF:= \text{hofib } (T_nF \rightarrow T_{n-1}F)$ is an {\em $n$-homogeneous}
 functor, which we call the {\em $n$-th Weiss layer} of $F$.
A fundamental theorem of Weiss calculus is that $n$-homogeneous functors are classified by $U(n)$-spectra, and in particular $L_nF$ is of the form
$$V \longmapsto \Omega^{\infty}(\Theta \wedge S^{nV})_{hU(n)}$$
where $\Theta$ is some $U(n)$-spectrum, $nV=\bb{C}^n\otimes_{\bb{C}}V$ with $U(n)$ acting on the left, and $S^{nV}$ denotes the one point compactification.
The classifying spectrum $\Theta$ is called the {\em $n$-th Weiss derivative} of $F$.

The Weiss tower, like the Goodwillie tower, can be viewed as a tool which resolves unstable structures
by stable ones.
While the values of these functors can be viewed as  unstable homotopy types,  the layers are infinite loop spaces by
the above classification theorem, and thus stable. Moreover, the bottom $T_0F$ of the tower is by definition $(T_0F)(V) = \hocolim_k F(V\oplus \bb{C}^k)$, which is manifestly a stabilization.
For $F(V) := BU(V)$ the bottom layer
 $T_0F$ is the constant functor $V\mapsto BU$, the classifying space for stable bundles.

\subsection{Identification of Derivatives}
We build on the seminal work of Arone \cite{Arone02} on the derivatives of the functor $F(V):= BU(V)$, whose $n$-th derivative is denoted by $\Theta_n$, $n\geq 1$. We denote by $\bb{L}_n$ the unreduced suspension of the realization of the category of non-trivial direct-sum decompositions of $\bb{C}^n$, and by $\text{Ad}_n$ the adjoint representation of $U(n)$. 
The following result provides a closed-form description of $\Theta_n$.

\begin{theorem}[Arone '02 \cite{Arone02}] \label{Weiss-derivative}
For every $n\geq 1$,  $\Theta_n$ is equivalent to $\Map_*(\bb{L}_n, \Sigma^{\infty}S^{\text{Ad}_n})$.
\end{theorem}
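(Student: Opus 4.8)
The plan is to extract $\Theta_n$ from the stable $n$-th cross-effect of the functor $F(V) = BU(V)$ and then to recognize the resulting $U(n)$-spectrum as a mapping spectrum out of the decomposition complex $\bb{L}_n$. By the classification of homogeneous functors recorded above, it suffices to compute the $n$-th layer $L_nF$ and read off its classifying spectrum; equivalently, $\Theta_n$ is determined by the multilinearized $n$-th cross-effect $\text{cr}_n F(V_1,\dots,V_n)$, which is the total homotopy fiber of the cube $S \mapsto BU\big{(}\bigoplus_{i\in S} V_i\big{)}$ assembled from the Whitney-sum maps, together with the natural $U(n)$-action obtained by mixing the $n$ coordinates against the tautological $\bb{C}^n$. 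So the first thing I would do is reduce the statement to an identification of this cross-effect as a $U(n)$-spectrum.

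Next I would set up the combinatorial model that produces $\bb{L}_n$. The point is that each nontrivial direct-sum decomposition of the coordinatizing space $\bb{C}^n$ gives, via Whitney sum, a way of assembling a bundle on $V$ out of bundles on proper summands, so the category of nontrivial decompositions indexes precisely the degeneracies that the cross-effect is designed to annihilate. I would organize the cube computing $\text{cr}_n F$ over this category and show that its total homotopy fiber is controlled by the unreduced suspension of the realization of that category, namely $\bb{L}_n$. The case $n=1$ is a clean anchor: since $\bb{C}^1$ admits no nontrivial decomposition, the category is empty and $\bb{L}_1 = S^0$, and one recovers the expected first derivative $\Theta_1 \simeq \Map_*(S^0, \Sigma^{\infty} S^{\text{Ad}_1}) \simeq \Sigma^{\infty} S^1$ (as $U(1)$ is abelian, $\text{Ad}_1$ is trivial of real dimension one), consistent with the first-layer description invoked elsewhere in the paper.

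The heart of the argument is a duality step. I would rewrite the relevant homotopy colimit over the decomposition category as a mapping spectrum $\Map_*(\bb{L}_n, -)$ — a Spanier--Whitehead / Koszul-type dualization, formally parallel to the appearance of the partition complex in the Goodwillie derivatives of the identity — applied to a representation sphere of $U(n)$. The hard part will be pinning down exactly which representation appears. One must track the $U(n)$-action on the infinitesimal (tangent) data of the Whitney-sum maps and of the Stiefel-manifold morphism spaces of $\cal{J}$, and prove that the normal data along the decomposition strata assembles into the adjoint representation $\text{Ad}_n$ of $U(n)$, thereby producing the twist $S^{\text{Ad}_n}$ rather than some other representation sphere. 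This identification relies essentially on the inner-product and isometric-inclusion structure of $\cal{J}$, and it is the delicate and genuinely clever point of the computation; everything else is bookkeeping of homotopy (co)limits and $\hofib$/$\hocolim$ manipulations.

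Finally I would reassemble: combining the cross-effect reduction, the identification of the decomposition strata with $\bb{L}_n$, and the adjoint-representation twist, one obtains $\Theta_n \simeq \Map_*(\bb{L}_n, \Sigma^{\infty} S^{\text{Ad}_n})$ as $U(n)$-spectra for all $n\geq 1$. I would sanity-check the general formula against small $n$ (beyond $n=1$) to confirm the bookkeeping of the $U(n)$-action and the dimension count of $\text{Ad}_n$, since consistency in low degrees is the most efficient way to detect an error in the normal-bundle analysis that dominates the proof.
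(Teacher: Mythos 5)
First, a point of comparison that matters: the paper does not prove Theorem~\ref{Weiss-derivative} at all --- it is imported verbatim from \cite{Arone02} --- so your proposal can only be measured against Arone's published argument, and against that it has a genuine gap at its foundation. You begin by asserting that in Weiss (unitary) calculus the derivative $\Theta_n$ ``is determined by the multilinearized $n$-th cross-effect,'' modeled as the total homotopy fiber of the Whitney-sum cube $S \mapsto BU\big(\bigoplus_{i\in S} V_i\big)$ in $n$ independent variables. That is Goodwillie-calculus technology, and it is not how derivatives in orthogonal/unitary calculus are defined or computed. In Weiss calculus the $n$-th derivative arises from iterated differences in a \emph{single} variable: $E^{(n)}(V)$ is the total homotopy fiber of the cube $S \mapsto E(\bb{C}^S \oplus V)$, $S \subseteq \{1, \dots, n\}$, in which every edge adds a one-dimensional summand, and the $U(n)$-structure comes from the enriched morphism spaces of the categories $\cal{J}_n$ in \cite{Weiss}, not from permuting variables. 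A comparison between the Goodwillie cross-effects of $BU$ and its Weiss derivatives is itself a nontrivial theorem --- the two objects do not even carry the same symmetry (a $\Sigma_n$-action versus a $U(n)$-action) --- so your opening reduction needs a proof you do not supply, and it is not the route Arone takes.

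Even granting that reduction, the two identifications that constitute the theorem are asserted rather than argued. The indexing shape of your cube is the poset of subsets of $\{1,\dots,n\}$, not the category of nontrivial direct-sum decompositions of $\bb{C}^n$, so the step ``show that its total homotopy fiber is controlled by the unreduced suspension of the realization of that category'' is exactly the content of the theorem, not bookkeeping; and you correctly flag the twist $S^{\text{Ad}_n}$ as the heart of the matter but give no mechanism producing the adjoint representation. For contrast, Arone's actual argument exploits the specific geometry of $BU(-)$: the engine is the equivalence $\hofib\big(BU(V) \to BU(\bb{C}\oplus V)\big) \simeq U(\bb{C}\oplus V)/U(V) \simeq S(\bb{C}\oplus V)$, a unit sphere, which converts the computation into one about sphere-like functors, where $\bb{L}_n$ and the adjoint sphere emerge from an equivariant duality analysis parallel to the partition-complex computation of the Goodwillie derivatives of the identity (as in \cite{A-M}, \cite{Arone-Dwyer}). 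Your $n=1$ sanity check ($\bb{L}_1 = S^0$, $\Theta_1 \simeq \Sigma^{\infty}S^1$) is correct and consistent with Proposition~\ref{1stlayer}, but it does not test any of the steps where the difficulty actually lives.
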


It follows immediately that the $n$-th layer of $F$ is of the form
$$(L_nF)(V) \simeq \Omega^{\infty}\Map_*(\bb{L}_n, \Sigma^{\infty}S^{\text{Ad}_n}\wedge S^{nV})_{hU(n)}.$$

For a general $n$, the $n$-th derivative $\Map_*(\bb{L}_n, \Sigma^{\infty}S^{\text{Ad}_n})$ need not have a homotopy type we can readily describe.
However, the first derivative, and hence the first layer, can be made explicit in a way which
 is of fundamental importance to this paper.

\begin{proposition} \label{1stlayer}
The first Weiss layer $(L_1F)(V)$ of $F$ is equivalent to $Q(\Sigma S^V)_{hU(1)}$, which in turn is equivalent to $Q\Sigma \bb{C}P^{\infty}_r$.
\end{proposition}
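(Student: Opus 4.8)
The plan is to specialize the general layer formula
$(L_nF)(V) \simeq \Omega^{\infty}\Map_*(\bb{L}_n, \Sigma^{\infty}S^{\text{Ad}_n}\wedge S^{nV})_{hU(n)}$
to $n=1$, compute each of the three inputs ($\bb{L}_1$, $\text{Ad}_1$, and the $U(1)$-action on $1V$) explicitly, and then recognize the resulting homotopy-orbit spectrum as the suspension spectrum of a stunted projective space. First I would determine $\bb{L}_1$: the category of non-trivial direct-sum decompositions of $\bb{C}^1$ is empty, since a one-dimensional space admits no splitting into two nonzero subspaces, so its realization is empty and the unreduced suspension of the empty space is $S^0$. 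Thus $\bb{L}_1 \simeq S^0$ with trivial $U(1)$-action, and $\Map_*(\bb{L}_1, -) = \Map_*(S^0,-)$ is naturally the identity. Next, since $U(1)$ is abelian its adjoint representation is the trivial one-dimensional real representation, so $S^{\text{Ad}_1}\cong S^1$ with trivial $U(1)$-action, contributing a single suspension coordinate that commutes past the homotopy-orbit construction. Finally $1V = \bb{C}^1\otimes_{\bb{C}}V = V$, with $U(1)$ acting by scalar multiplication. Feeding these in yields
$$(L_1F)(V) \simeq \Omega^{\infty}\big(\Sigma (\Sigma^{\infty}S^V)_{hU(1)}\big) = Q(\Sigma S^V)_{hU(1)},$$
which is the first asserted equivalence.

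For the second equivalence I would use that homotopy orbits commute with $\Sigma^{\infty}$, so that $(\Sigma^{\infty}S^V)_{hU(1)} \simeq \Sigma^{\infty}\big(EU(1)_+ \wedge_{U(1)} S^V\big)$, and then recognize $EU(1)_+ \wedge_{U(1)} S^V$ as the Thom space of the bundle $EU(1)\times_{U(1)} V$ over $BU(1) = \bb{C}P^{\infty}$. Since $V\cong \bb{C}^r$ as a $U(1)$-representation under scalar multiplication (with $r = \dim_{\bb{C}}V$), this bundle is $L^{\oplus r}$, the $r$-fold sum of the tautological line bundle, whose Thom space is the stunted projective space $\bb{C}P^{\infty}/\bb{C}P^{r-1} = \bb{C}P^{\infty}_r$. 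Reinstating the suspension and applying $\Omega^{\infty}\Sigma^{\infty} = Q$ then gives $(L_1F)(V) \simeq Q\Sigma \bb{C}P^{\infty}_r$.

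The main obstacle is the Thom space identification: one must correctly match the representation sphere $S^V$, under the homotopy-orbit construction, with $r$ copies of the tautological line bundle and verify that the indexing produces exactly $\bb{C}P^{\infty}_r$ rather than some degree shift. I expect to do this by the Thom isomorphism, comparing $\tilde H^*(\text{Th}(L^{\oplus r}))$ with $H^{*-2r}(\bb{C}P^{\infty})$ as modules over $H^*(\bb{C}P^{\infty})$ and tracking the Thom class in degree $2r$ against the bottom cell of $\bb{C}P^{\infty}_r$. By contrast, the bookkeeping with $\bb{L}_1$ and $\text{Ad}_1$ is routine once the empty-category and abelian-group observations are in place, and the commutation of homotopy orbits with $\Sigma^{\infty}$ is a standard formal fact.
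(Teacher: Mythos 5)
Your proposal is correct and takes essentially the same route as the paper: it specializes Arone's layer formula at $n=1$ using $\bb{L}_1 \simeq S^0$ and $S^{\text{Ad}_1}\cong S^1$ with trivial $U(1)$-action, then identifies $S^V_{hU(1)}$ with the Thom space of $\gamma^{\oplus r}$ over $\bb{C}P^{\infty}$ and hence with $\bb{C}P^{\infty}_r$. The only cosmetic difference is that the paper settles the Thom-space identification by citing Atiyah's geometric fact that $\bb{C}P^{r+N}/\bb{C}P^{r-1}$ is the Thom space of $\gamma_{1,N}^{\oplus r}$ (letting $N\to\infty$), rather than by your proposed Thom-isomorphism bookkeeping, which by itself would only confirm the indexing rather than produce the equivalence.
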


\begin{proof}
By Theorem \ref{Weiss-derivative}, $(L_1F)(V)$ is equivalent to  
$\Omega^{\infty}\Map_*(\bb{L}_1, \Sigma^{\infty}S^{\text{Ad}_1}\wedge S^{V})_{hU(1)}$.
Since the space $\bb{L}_1$ is just $S^0$, and $S^{\text{Ad}_1}$ is $S^1$ with trivial $U(1)$-action, 
$(L_1F)(V)$ is identified with $Q(\Sigma S^V)_{hU(1)}$.
To establish the second equivalence, we first observe that
when $V=\bb{C}^r$ the space $S^V_{hU(1)} = S^{2r}_{hU(1)}$ is the Thom space of the vector bundle 
$\gamma^{\oplus r}$ over $BU(1) = \bb{C}P^{\infty}$, where $\gamma$ is the tautological line bundle. 
Indeed, the action of $U(1)$ on $S^{2r} = S^V|_{V=\bb{C}^r}$ restricts to 
scalar multiplication of $U(1)$ on $\bb{C}^r$. Therefore the unreduced homotopy orbit
$$uS^{2r}_{hU(1)} := EU(1) \times_{U(1)} S^{2r} = \big{(}EU(1) \times_{U(1)} S^V \big{)}|_{V=\bb{C}^r}$$
is the fiberwise one-point compactification of the vector bundle $\gamma^{\oplus r}$ over $\bb{C}P^{\infty}$. The reduced homotopy orbit $S^{2r}_{hU(1)}$ is obtained from $uS^{2r}_{hU(1)}$ by collapsing the section of $\bb{C}P^{\infty}$ at infinity, and is therefore the desired Thom space.
Secondly, we recall (say from Proposition 4.3 of \cite{Atiyah}) that $\bb{C}P^{r+N}/\bb{C}P^{r-1}$ is the Thom space of $\gamma_{1, N}^{\oplus r}$, where $\gamma_{1, N}$ denotes the canonical line bundle over $\bb{C}P^{N}$. Letting $N\rightarrow \infty$ identifies the Thom space of $\gamma^{\oplus r}$ over $\bb{C}P^{\infty}$ with $\bb{C}P^{\infty}_r = \bb{C}P^{\infty}/\bb{C}P^{r-1}$. We conclude that $S^{2r}_{hU(1)} \simeq \bb{C}P^{\infty}_r$, and hence that
$$(L_1F)(\bb{C}^r) \simeq Q\Sigma S^{2r}_{hU(1)} \simeq Q\Sigma \bb{C}P^{\infty}_r.$$
\end{proof}

To study bundles over $\bb{C}P^l$, $l>2$, we consider the Weiss tower for the following type of functors: 
$$F^l: V\longmapsto \Map_* \big{(} \bb{C}P^l, BU(V) \big{)}, \quad l>2,$$
as $[\bb{C}P^l, BU(r)]$ is just $\pi_0F^l(\bb{C}^r)$.
The Weiss tower of $F^l$ can be obtained by mapping $\bb{C}P^l$ to the Weiss tower of $F(-)=BU(-)$.

\begin{proposition} \label{mappingtower}
The following are equivalences:
$$(T_nF^l)(V) \simeq \Map_*\big{(}\bb{C}P^l, (T_nF)(V)\big{)}, \text{and } (L_nF^l)(V) \simeq \Map_*\big{(}\bb{C}P^l, (L_nF)(V)\big{)}.$$
\end{proposition}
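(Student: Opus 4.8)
The plan is to observe that $F^l$ is obtained from $F$ by post-composition with the mapping-space functor $G := \Map_*(\bb{C}P^l, -)\colon \Top_* \to \Top_*$, i.e. $F^l = G\circ F$, and then to show that $G$ commutes with the two constructions $T_n(-)$ and $L_n(-)$ that produce the Weiss tower and its layers. Since both the polynomial approximation and the layers are assembled entirely out of homotopy limits together with a single sequential homotopy colimit, it suffices to check that $G$ preserves these two kinds of homotopy (co)limits in the relevant range.

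First I would recall Weiss's explicit model for the polynomial approximation. For a continuous functor $E\colon \cal{J}\to \Top_*$ one forms (the unitary analog of)
$$(\tau_n E)(V) = \holim_{0\neq U\subseteq \bb{C}^{n+1}} E(V\oplus U),$$
the homotopy limit being taken over the topological poset of nonzero subspaces of $\bb{C}^{n+1}$, and then $T_n E = \hocolim_k \tau_n^k E$, with $L_n E = \hofib(T_n E \to T_{n-1} E)$. Because $\bb{C}P^l$ is a CW complex, $G = \Map_*(\bb{C}P^l, -)$ is (the derived functor of) a right adjoint to $\bb{C}P^l\wedge(-)$, and therefore preserves all homotopy limits. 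Applied levelwise this gives a natural equivalence
$$G\big((\tau_n E)(V)\big) = G\Big(\holim_{0\neq U} E(V\oplus U)\Big) \simeq \holim_{0\neq U} G\big(E(V\oplus U)\big) = \big(\tau_n (G\circ E)\big)(V),$$
so that $G\circ \tau_n \simeq \tau_n \circ G$, and hence $G\circ \tau_n^k \simeq \tau_n^k\circ G$, naturally in $V$.

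It then remains to commute $G$ past the sequential homotopy colimit defining $T_n$. Here I would use that $\bb{C}P^l$ is a finite CW complex, hence compact, so that $\Map_*(\bb{C}P^l, -)$ commutes with sequential homotopy colimits; after arranging the structure maps $(\tau_n^k F)(V)\to (\tau_n^{k+1}F)(V)$ to be cofibrations (a routine cofibrant replacement internal to the construction), this yields
$$G\big((T_nF)(V)\big) = G\Big(\hocolim_k (\tau_n^k F)(V)\Big) \simeq \hocolim_k G\big((\tau_n^k F)(V)\big) \simeq \hocolim_k (\tau_n^k F^l)(V) = (T_nF^l)(V),$$
natural in $V$. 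This is the one step that genuinely uses a property of $\bb{C}P^l$ beyond formal nonsense, and I expect it to be the main point requiring care; everything else is preservation of homotopy limits, which is automatic for $G$.

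Finally, for the layers, $L_nF = \hofib(T_nF \to T_{n-1}F)$ is a homotopy fiber, a special case of a homotopy limit, so the homotopy-limit-preservation of $G$ applies once more:
$$G\big((L_nF)(V)\big) \simeq \hofib\big(G((T_nF)(V)) \to G((T_{n-1}F)(V))\big) \simeq (L_nF^l)(V),$$
where the last equivalence uses the identification of $T_nF$ and $T_{n-1}F$ already established. Assembling these natural equivalences gives the two claimed statements.
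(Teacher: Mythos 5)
Your proof is correct and takes essentially the same approach as the paper: the paper's proof likewise observes that $T_nF$ is, by Weiss's construction (Section 5 of \cite{Weiss}), a direct homotopy colimit of homotopy limits, and that for a finite complex $X$ the functor $\Map_*(X,-)$ preserves arbitrary homotopy limits and filtered homotopy colimits. Your writeup simply makes explicit the details the paper leaves implicit --- the iterated $\tau_n$ model, the cofibrancy caveat for the sequential homotopy colimit, and the homotopy-fiber case for the layers.
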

\begin{proof}
By the construction of Weiss calculus (see Section 5 of \cite{Weiss}), $T_nF$ is a direct homotopy colimit of some homotopy limits. 
For every finite complex $X$ the second-variable mapping functor $\Map_*(X, -)$ preserves arbitrary homotopy limits and filtered homotopy colimits.
\end{proof}

Thus the Weiss tower of $F^l$ can be presented by the following diagram.
\begin{center}
\begin{tikzcd}[row sep = small, column sep = small]
 & \vdots \ar[d] & \\
 & \Map_*\big{(}\bb{C}P^l, (T_2F)(V)\big{)} \ar[d] & \Map_*\big{(}\bb{C}P^l, (L_2F)(V)\big{)} \ar[l]\\
 & \Map_*\big{(}\bb{C}P^l, (T_1F)(V)\big{)} \ar[d] & \Map_*\big{(}\bb{C}P^l, (L_1F)(V)\big{)} \ar[l] \\
\Map_*\big{(}\bb{C}P^l, BU(V)\big{)} \ar{r} \ar[ru, bend left = 5] \ar[ruu, bend left = 15] & \Map_*(\bb{C}P^l, BU) & 
\end{tikzcd}
\end{center}

Convergence still holds because the original tower becomes more highly connected in each layers (see Section 2.3). The base $\Map_*(\bb{C}P^l, BU)$ of the Weiss tower is generally not a connected space. However, in this paper we  work over the base point component, namely the component of the stable trivial bundle.

\subsection{Cohomology Descriptions of Layers}

An insight of Arone \cite{Arone02} is that the spectra $\Theta_n \simeq \Map_*(\bb{L}_n, \Sigma^{\infty}S^{\text{Ad}_n})$, appearing here as Weiss derivatives of the functor $F(-) = BU(-)$, are closely related to the Goodwillie derivatives of the identity functor. 
Building on prior work including that of Arone \cite{Arone98}, Arone-Dwyer \cite{Arone-Dwyer} and Arone-Mahoword \cite{A-M}, 
the following cohomology description of $\Theta_n$, and hence that of the layers $L_nF$, is established in \cite{Arone02}.
In what follows, $\cal{A}_{k-1}$ denotes the subalgebra of the mod $p$ Steenrod algebra $\cal{A}$, generated by elements $\Sq^1, \Sq^2, \cdots, \Sq^{2^{k-1}}$ if $p=2$ and by elements $\beta, \cal{P}^1, \cal{P}^p, \cdots, \cal{P}^{p^{k-2}}$ if $p$ is odd. 

\begin{theorem}[Arone '02 \cite{Arone02}] \label{Arone}
The spectrum $\Theta_n$ is rationally contractible for $n>1$. Integrally, it is contractible unless $n$ is a prime power. If $n = p^k >1$ then the homology of $\Theta_{p^k}$ is all $p$-torsion, and the mod $p$ cohomology of $\Theta_{p^k}$ is free over $\cal{A}_{k-1}$.

Over $\cal{A}_{k-1}$ the cohomology of the homotopy orbit spectrum $(\Theta_{p^k}\wedge S^{p^kV})_{hU(p^k)}$ is free, and is isomorphic (up to appropriate suspension depending on $V$) to $\cal{A}_{k-1}\otimes \bar{P}$, where $\bar{P}$ is a free module on one generator over the polynomial algebra $\bb{F}_p[d_{-\infty}, d_0, \cdots, d_{k-1}]$, with $|d_j| = 2(p^k-p^j)$.
\end{theorem}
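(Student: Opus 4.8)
The plan is to reduce everything to the $U(n)$-equivariant homotopy type of the decomposition complex $\bb{L}_n$, using the closed form $\Theta_n \simeq \Map_*(\bb{L}_n, \Sigma^{\infty}S^{\text{Ad}_n})$ of Theorem \ref{Weiss-derivative}. First I would determine $\bb{L}_n$ as a $U(n)$-space: being the unreduced suspension of the realization of the poset of nontrivial direct-sum decompositions of $\bb{C}^n$, it is the complex-linear analogue of the classical partition complex $|\Pi_n|$, whose reduced homology is the sign-twisted Lie representation of the symmetric group. The structural input I would establish is that $\bb{L}_n$ is, $U(n)$-equivariantly, a wedge of spheres carrying a Steinberg-type representation; this controls every subsequent claim, and here I would lean on the prior analyses of \cite{Arone98} and \cite{Arone-Dwyer}.

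For rational contractibility when $n>1$, and integral contractibility when $n$ is not a prime power, I would compute the rational and $p$-local homology of $\Theta_n$ by a transfer or Euler-characteristic argument after restricting along the Weyl-group inclusion of the maximal torus normalizer, $\Sigma_n\hookrightarrow U(n)$. The point is that the relevant (co)invariants of the Steinberg-type module on $\widetilde{H}_*(\bb{L}_n)$ vanish rationally, and vanish $p$-locally unless $n=p^k$. This is the unitary shadow of the Arone--Dwyer identification of partition-complex homology with Steinberg modules of $\text{GL}_k(\bb{F}_p)$; the prime-power constraint, and the all-$p$-torsion conclusion, enter exactly as they do there, through nonvanishing of the $p$-local Steinberg summand.

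For the mod $p$ cohomology when $n=p^k$, I would transport the Arone--Mahowald computation \cite{A-M} of the Goodwillie derivatives $\partial_{p^k}(\id)$ of the identity functor. The bridge is that $\Theta_{p^k}$, restricted along $\Sigma_{p^k}\hookrightarrow U(p^k)$ and accounting for the $\text{Ad}_n$-twist and suspension, should match the symmetric-group spectrum whose mod $p$ cohomology is already known to be free over $\cal{A}_{k-1}$. Freeness over $\cal{A}_{k-1}$, being a statement about the cohomology as a module, is then inherited through this comparison.

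Finally, for the homotopy-orbit spectrum $(\Theta_{p^k}\wedge S^{p^kV})_{hU(p^k)}$ I would run the homotopy-orbit (Borel) spectral sequence, computing $H^*(BU(p^k);-)$ with coefficients in $H^*(\Theta_{p^k}\wedge S^{p^kV})$. I expect the doubled Dickson algebra $\bb{F}_p[d_{-\infty}, d_0, \dots, d_{k-1}]$ with $|d_j|=2(p^k-p^j)$ to arise as the image of the Dickson invariants of $\text{GL}_k(\bb{F}_p)$ under the doubling that reflects $\bb{C}P^{\infty}$ versus $B\bb{Z}/p$, with $d_{-\infty}$ in degree $2p^k$ recording the orientation class of the $S^{p^kV}$-twist. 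The main obstacle will be precisely this last step: controlling the $U(p^k)$-Borel cohomology well enough to show the spectral sequence collapses onto the stated free $\cal{A}_{k-1}$-module $\cal{A}_{k-1}\otimes\bar{P}$, with $\bar{P}$ of rank one over the doubled Dickson algebra. Pinning down the generator degrees and genuine freeness \emph{simultaneously} — rather than merely an associated graded — is the delicate point, and it is where the full Steinberg-module and Dickson-invariant bookkeeping of \cite{Arone-Dwyer} and \cite{A-M} must be invoked.
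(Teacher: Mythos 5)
The first thing to say is that the paper does not prove Theorem \ref{Arone} at all: it is imported verbatim from \cite{Arone02}, and the only argumentative remark in the text is the observation that rational contractibility follows alternatively from rational formality of the spaces $BU(n)$. So your proposal cannot be compared against an in-paper proof; it can only be measured against Arone's actual argument, whose ingredients you have correctly located in \cite{Arone98}, \cite{Arone-Dwyer} and \cite{A-M}. At that level your sketch is a reasonable reconstruction of the architecture: the reduction of everything to the $U(n)$-equivariant structure of $\bb{L}_n$, the Steinberg-module description of partition-complex homology, the prime-power dichotomy, and the doubling of the Dickson algebra (degrees $2(p^k-p^j)$ rather than $p^k-p^j$) reflecting $\bb{C}P^{\infty}$ versus $B\bb{Z}/p$, with $d_{-\infty}$ in degree $2p^k$ coming from the $S^{p^kV}$-twist, are all the right heuristics.

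However, as a proof the proposal has concrete gaps. First, the structural claim that $\bb{L}_n$ is ``$U(n)$-equivariantly a wedge of spheres carrying a Steinberg-type representation'' is not correct as stated: the wedge-of-spheres statement for partition-type complexes is nonequivariant, and the equivariant content one actually needs is an analysis of isotropy groups and fixed points (in Arone's argument, a filtration of $\bb{L}_n$ together with the fact that away from the prime-power case the isotropy is too large -- e.g.\ contains positive-dimensional subgroups -- forcing the mapping spectrum $\Map_*(\bb{L}_n,\Sigma^{\infty}S^{\text{Ad}_n})$ to vanish). Your transfer/Euler-characteristic argument after restricting to $\Sigma_n$ is not a substitute: restriction to the Weyl group loses exactly the continuous part of the isotropy on which the vanishing rests, and no mechanism is given for why the relevant (co)invariants vanish integrally rather than just rationally. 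Second, and most importantly, the final step -- which you candidly flag as the ``main obstacle'' -- is where the theorem actually lives: a Borel spectral sequence with coefficients in $H^*(\Theta_{p^k}\wedge S^{p^kV})$ has no visible reason to collapse, and freeness over $\cal{A}_{k-1}$ of the abutment cannot be extracted from an associated graded. Arone's route avoids this entirely: using the Tits-building/partition-complex comparison of \cite{Arone-Dwyer}, the homotopy-orbit spectrum is identified on the nose with a Steinberg summand of an explicit Thom spectrum over the classifying space of a projective elementary abelian $p$-subgroup $\Delta_k\subset U(p^k)$, after which the free $\cal{A}_{k-1}$-module structure on $\cal{A}_{k-1}\otimes\bar{P}$, with $\bar{P}$ free of rank one over $\bb{F}_p[d_{-\infty},d_0,\dots,d_{k-1}]$, is read off from known Mitchell--Priddy/Arone--Mahowald-type computations rather than deduced from a spectral-sequence collapse. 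So your plan points at the right literature but, at the step that matters, substitutes an unproved collapse statement for the identification that constitutes the proof.
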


The rational contractibility alternatively follows from the fact that the spaces $BU(n)$ are rationally formal, so Chern
classes, which are pulled back from the bottom layer of the tower $BU$, determine vector bundles.
The main conclusion we shall draw from Theorem \ref{Arone} is the connectivity of the layers. 
If $n$ is not a prime power then $(L_nF)(V)$ is contractible. 
When $n$ equals the prime power $p^k$, the lowest nontrivial reduced cohomology of $(L_{p^k}F)(V)$ appears at degree $1 + 2p^k\cdot \dim_{\bb{C}}(V)$. 
The following connectivity estimate holds.

\begin{corollary} \label{ctd}
Let $r$ be the dimension of $V$. Then $(L_2F)(V)$ is $4r$-connected. The higher layers $(L_iF)(V), i\geq 3$ are more than $4r$-connected.
\end{corollary}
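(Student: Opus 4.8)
The plan is to read the connectivity of each layer directly off its mod $p$ cohomology, as supplied by Theorem \ref{Arone}, using the fact that the layers are infinite loop spaces. By the classification of homogeneous functors recalled above, $(L_iF)(V) \simeq \Omega^\infty E_i$ where $E_i := (\Theta_i \wedge S^{iV})_{hU(i)}$ is a connective spectrum; since $\pi_k(\Omega^\infty E_i) = \pi_k(E_i)$ for all $k \geq 0$, the connectivity of the space $(L_iF)(V)$ equals that of the spectrum $E_i$. It therefore suffices to locate the bottom homotopy group of each $E_i$, and for this I would pass through homology.

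First I would dispose of the indices $i$ that are not prime powers: by Theorem \ref{Arone} the derivative $\Theta_i$ is then contractible, so $E_i$ and hence $(L_iF)(V)$ are contractible, which is more than $4r$-connected. For $i = p^k$ a prime power, Theorem \ref{Arone} tells us two things I would combine. On one hand, the homology of $\Theta_{p^k}$ is entirely $p$-torsion, so the same holds after smashing with $S^{iV}$ and forming the Borel construction; thus $E_{p^k}$ has vanishing rational homology and no $q$-torsion for $q \neq p$, and its integral connectivity is detected by mod $p$ cohomology. On the other hand, the cohomology computation $\mathcal{A}_{k-1} \otimes \bar P$ pins down the bottom class: as already recorded above, the lowest nonzero reduced mod $p$ cohomology of $(L_{p^k}F)(V)$ sits in degree $1 + 2 p^k r$. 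Combining these, the lowest nonzero integral homology of $E_{p^k}$ lies in degree $1 + 2 p^k r$, so by the Hurewicz theorem $E_{p^k}$, and therefore $(L_{p^k}F)(V)$, is $2 p^k r$-connected.

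It then remains only to specialize the index. For $i = 2 = 2^1$ this gives connectivity $2 \cdot 2 \cdot r = 4r$, which is the first assertion. For $i \geq 3$ the prime powers that occur are $3, 4, 5, 7, 8, \dots$, each at least $3$, so the connectivity $2 p^k r \geq 6r$ strictly exceeds $4r$ whenever $r \geq 1$; the remaining, non-prime-power layers are contractible. In every case with $i \geq 3$ the layer is more than $4r$-connected, giving the second assertion.

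The one step demanding care is the passage from cohomology to connectivity: I must make sure that the $p$-torsion hypothesis of Theorem \ref{Arone} is genuinely preserved under $- \wedge S^{iV}$ and the homotopy-orbit construction $(-)_{hU(i)}$, so that mod $p$ cohomology really does see the Hurewicz dimension and no lower-dimensional rational or prime-to-$p$ class is overlooked. Once this is secured, the degree $1 + 2 p^k r$ of the bottom cohomology class translates immediately into the connectivity bounds above, and the corollary follows.
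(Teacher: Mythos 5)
Your proposal is correct and takes essentially the same approach as the paper, which deduces Corollary~\ref{ctd} directly from Theorem~\ref{Arone} by noting that non-prime-power layers are contractible and that the bottom reduced mod $p$ cohomology of $(L_{p^k}F)(V)$ sits in degree $1+2p^k r$, so that $p^k=2$ gives $4r$-connectivity and $p^k\geq 3$ gives strictly more. Your explicit justification of the passage from cohomology to connectivity (all-$p$-torsion homology surviving the smash and homotopy-orbit constructions, plus the Hurewicz theorem for the connective spectrum $(\Theta_{p^k}\wedge S^{p^kV})_{hU(p^k)}$) simply spells out what the paper leaves implicit.
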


\subsection{Proof of Theorem~\ref{reduction}} 

Let  $l> 2$, and  $r$ be such that $\frac{l}{2}\leq r \leq l-1$.  Consider the Weiss tower of 
$F^l(V) = \Map_*\big{(}\bb{C}P^l, BU(V)\big{)}$ at $V = \bb{C}^r$, which by
Proposition~\ref{mappingtower} is obtained by mapping $\bb{C}P^l$ to the Weiss tower of $F(V)=BU(V)$.
According to the connectivity estimate of Corollary \ref{ctd}, the layers $L_nF(\bb{C}^r)$ of the tower are at least $4r$-connected for all $n\geq 2$.
Since $\bb{C}P^l$ is $2l$-dimensional and $2l \leq 4r$, spaces
$$(L_nF^l)(\bb{C}^r) \simeq \Map_*\big{(}\bb{C}P^l, L_nF(\bb{C}^r)\big{)}$$
are connected for $n\geq 2$ and hence the connected components of the Weiss tower of $F^l$ at $V=\bb{C}^r$ stabilize after the first stage.
Recall from Proposition \ref{1stlayer} that $(L_1F)(V) \simeq Q(\Sigma S^V)_{hU(1)}$, which is $QS^{2r+1}_{hU(1)} \simeq Q\Sigma \bb{C}P^{\infty}_r$ when evaluated at $V=\bb{C}^r$. So we conclude that the first layer $L_1F^l$ of $F^l$ at $V=\bb{C}^r$ is of the form
$$(L_1F^l)(\bb{C}^r) \simeq \Map_*\big{(}\bb{C}P^l, QS^{2r+1}_{hU(1)}\big{)} \simeq \Map_*(\bb{C}P^l, Q\Sigma \bb{C}P^{\infty}_r).$$

To sum up, the first stage of the Weiss tower of $F^l$ at $V=\bb{C}^r$ consists of the   fibration
$$\Map_*\big{(}\bb{C}P^l, QS^{2r+1}_{hU(1)}\big{)} \longrightarrow \Map_*\big{(}\bb{C}P^l, BU(r)\big{)} \longrightarrow \Map_*\big{(}\bb{C}P^l, BU\big{)}.$$
Taking components and using the connectedness of higher layers yields the following exact sequence
$$0\longrightarrow [\bb{C}P^l, QS^{2r+1}_{hU(1)}] \longrightarrow [\bb{C}P^l, BU(r)] \longrightarrow [\bb{C}P^l, BU].$$

This exactness implies that the subset of rank $(l-1)$ bundles over $\bb{C}P^l$ which stabilize to the trivial bundle, whihc
for complex projective spaces are exactly those with trivial Chern class data, is identified with the abelian group $[\bb{C}P^l, QS^{2r+1}_{hU(1)}]$. Since this is just $\{ \bb{C}P^l, \Sigma \bb{C}P^{\infty}_r \}$, Theorem \ref{reduction} is proved.

\section{First Cases: Rank Two Bundles over \texorpdfstring{$\bb{C}P^3$}{CP3} and \texorpdfstring{$\bb{C}P^4$}{CP4}}

The goal of this section is to make calculations with the Weiss tower to recover the two classical examples, Corollaries \ref{cor3} and \ref{cor4}. 
Namely, the vanishing Chern enumeration of rank 2 bundles over $\bb{C}P^3$, and that of rank 2 bundles over $\bb{C}P^4$.
By Theorem~\ref{reduction} we have the identifications
$$\Vect_2^0(\bb{C}P^3) \cong \{\bb{C}P^3, \Sigma\bb{C}P^{\infty}_2\} \quad \text{and} \quad \Vect_2^0(\bb{C}P^4) \cong \{\bb{C}P^4, \Sigma\bb{C}P^{\infty}_2\}.$$

We prove Corollaries \ref{cor3} and \ref{cor4} by showing that
$$\{\bb{C}P^3, \Sigma\bb{C}P^{\infty}_2\} \cong \bb{Z}/2 \quad \text{and} \quad \{\bb{C}P^4, \Sigma\bb{C}P^{\infty}_2\} = 0.$$ 

We shall regard these as generalized cohomology computations and apply the Atiyah-Hirzebruch spectral sequence.
To learn the coefficient ring, namely the stable homotopy groups of $\Sigma\bb{C}P^{\infty}_2$, we apply the (2-primary) Adams spectral sequence.
For those unfamiliar with the Adams spectral sequence, the recent expository paper of Beaudry-Campbell \cite{Beaudry-Campbell} provides a wonderful introduction.
We start by describing the action of the Steenrod squares on the cohomology of $\Sigma\bb{C}P^{\infty}_2$, and then construct an explicit minimal resolution to compute the Adams $E_2$-page through a range. Calculations are 2-local unless otherwise stated.

\subsection{First Stable Homotopy Groups of \texorpdfstring{$\Sigma\bb{C}P^{\infty}_2$}{CPinfinity2}} 
We compute $\pi_*(Q\Sigma\bb{C}P^{\infty}_2) = \pi_*^s(\Sigma\bb{C}P^{\infty}_2)$ through a range.
The mod two cohomology of $\Sigma\bb{C}P^{\infty}_2$
$$H^*(\Sigma\bb{C}P^{\infty}_2;\bb{Z}/2) \cong \bb{Z}/2 \cdot \{y_5, y_7, \cdots, y_{2n+1}, \cdots\}$$
has a single $\bb{Z}/2$-generator in every odd degree $i\geq 5$, which we denote by the $y_i$. There are no nontrivial cup products.
The class $y_{2n+1}$ can be identified with the suspension of the class $x^n$ in the cohomology of $\bb{C}P^{\infty}$, and the action of the Steenrod squares is then identified with that on $\bb{C}P^\infty$.
The diagram below exhibits the action of the Steenrod squares on elements of $H^*(\Sigma\bb{C}P^{\infty}_2;\bb{Z}/2)$ in low degrees.
Here a straight line segment represent a nontrivial action of $\Sq^2$, a curved line segment represent that of $\Sq^4$, and a dashed curved line segment represent that of $\Sq^8$.

\begin{center}
  \begin{tikzpicture}
	 \draw (0, 0) node[below]{{\footnotesize{$y_5$}}};
			\draw (1,0) node[below]{{\footnotesize{$y_7$}}} -- (2,0) node[below]{{\footnotesize{$y_9$}}}; \draw [thin] (3,0) node[below]{{\footnotesize{$y_{11}$}}} -- (4,0) node[below]{{\footnotesize{$y_{13}$}}}; \draw [thin] (5,0) node[below]{{\footnotesize{$y_{15}$}}} -- (6,0) node[below]{{\footnotesize{$y_{17}$}}};
	  \draw (2,0) arc (45:135:1.41cm); \draw (3,0) arc (-45:-135:1.41cm); \draw (6,0) arc (45:135:1.41cm);
		\draw [dashed] (6,0) arc (45:135:2.83cm);
	\end{tikzpicture}
\end{center}

We now apply the Adams spectral sequence to compute the (2-local part of) the stable homotopy of $\Sigma\bb{C}P^{\infty}_2$, which has
$$E_2^{s,t} = \ext^{s, t}_{\cal{A}} \big{(}H^*(\Sigma\bb{C}P^{\infty}_2;\bb{Z}/2), \bb{Z}/2\big{)} \Longrightarrow \pi_{t-s}^{s}(\Sigma\bb{C}P^{\infty}_2).$$

In the Appendix we present an explicit minimal $\cal{A}$-resolution of $H^*(\Sigma\bb{C}P^{\infty}_2;\bb{Z}/2)$ to compute these Ext groups. 
We summarize the result as follows.

\begin{center}
\DeclareSseqGroup\tower {} {
    \class(0,0)
    \foreach \y in {1,...,9} {
        \class(0,\y)
\structline
}}

\begin{sseqdata}[name = first results, Adams grading, yscale = 0.7, xscale = 1.2, y range = {0}{4}, x range = {5}{15}]
\tower(5, 0)
\class(6, 1)
\class(7, 2)
\tower(7, 0)
\class(8, 1) 
\structline(5, 0)(6, 1)
\structline(6, 1)(7, 2)
\structline(7, 0)(8, 1)
\structline(5, 0)(8, 1)

\class(15, 0)
\class(12, 1)
\class(11, 2)
\class(9, 3)
\class(9, 4)
\structline(9, 3)(9, 4)

\class["?"](13, 1) \class["?"](14, 1) \class["?"](15, 1)
\class["?"](12, 2) \class["?"](13, 2) \class["?"](14, 2) \class["?"](15, 2)
\class["?"](11, 3) \class["?"](12, 3) \class["?"](13, 3) \class["?"](14, 3) \class["?"](15, 3)
\class["?"](10, 4) \class["?"](11, 4) \class["?"](12, 4) \class["?"](13, 4) \class["?"](14, 4) \class["?"](15, 4)

\end{sseqdata}
\printpage[name =  first results, title = The 2-primary Adams $E_2$-page for $\pi_*^s(\Sigma\bb{C}P^{\infty}_2)$]
\end{center}

\vspace{2mm}

As usual, the horizontal axis is $t-s$ and the vertical axis is $s$. Each small circled dot represents a $\bb{Z}/2$. 
Bigger circled dots marked by question marks denote unknown groups. Places without circled dots are all zero.
As usual, vertical line segments represent multiplication by $h_0$, line segments of slope one represent multiplication by $h_1$, and line segments of slope 1/3 represent multiplication by $h_2$, etc. There is a single $h_0$ tower along colunms $t-s = 5$ and $t-s = 7$. There is nothing in the chart whenever $t-s \leq 4$, as $\Sigma\bb{C}P^{\infty}_2$ is 4-connected.

Any  $d_r$-differential starting from or arriving at columns with $t-s\leq 8$, $\forall r\geq 2$
must be trivial. One can then read off some first few stable homotopy groups of $\Sigma\bb{C}P^{\infty}_2$, which we summarize as follows.

\begin{lemma} \label{sthomotopy}
The stable homotopy groups $\pi_i^s(\Sigma\bb{C}P^{\infty}_2)$ for $i \leq 8$ are as follows.
\begin{center}
\begin{tabular}{c|c|c|c|c|c}
\hline
$i$ & $\leq 4$ & $5$ & $6$ & $7$ & $8$  \\
\hline
$\pi_i^s(\Sigma\bb{C}P^{\infty}_2)$ & $0$ & $\bb{Z}$ & $\bb{Z}/2$ & $\bb{Z} \oplus \bb{Z}/2$ & $\bb{Z}/2$ \\
\hline
\end{tabular}
\end{center}
\end{lemma}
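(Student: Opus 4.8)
The plan is to read each group $\pi_i^s(\Sigma\bb{C}P^\infty_2)$ for $i\le 8$ directly off the chart above, first fixing the free ranks rationally, then resolving the $2$-torsion together with its extensions, and finally checking that no odd-primary torsion enters the range. Since every $d_r$ with source or target in a column $t-s\le 8$ vanishes, the displayed $E_2$-page is already the $E_\infty$-page there, so the $2$-completed group $\pi_i^s(\Sigma\bb{C}P^\infty_2)_2^\wedge$ is filtered with associated graded read off column by column; because $\Sigma\bb{C}P^\infty_2$ has finite type, each $\pi_i^s$ is moreover finitely generated.

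First I would record the free ranks. Rationally $H_*(\Sigma\bb{C}P^\infty_2;\bb{Q})$ is $\bb{Q}$ in each odd degree $\ge 5$ and $0$ otherwise, so $\pi_i^s(\Sigma\bb{C}P^\infty_2)\otimes\bb{Q}$ has rank $1$ for $i=5,7$ and rank $0$ for $i=6,8$. This matches the two infinite $h_0$-towers based at $(5,0)$ and $(7,0)$: a finitely generated abelian group whose $2$-completion is $\bb{Z}_2$, whose rationalization has rank $1$, and which has no odd torsion (by the final step) must be $\bb{Z}$. Hence each tower contributes a single $\bb{Z}$, giving $\pi_5^s\cong\bb{Z}$ and the free part of $\pi_7^s$.

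Next I would resolve the surviving $2$-torsion and its extensions. Let $\iota_5$ be the image, under the bottom-cell inclusion $S^5\hookrightarrow\Sigma\bb{C}P^\infty_2$, of the generator of $\pi_5^s(S^5)$, detected at $(5,0)$. The $h_1$-multiplications along $(5,0)\to(6,1)\to(7,2)$ identify the class at $(6,1)$ with $\eta\iota_5$ and the isolated class at $(7,2)$ with $\eta^2\iota_5$, while the $h_2$-multiplication $(5,0)\to(8,1)$ identifies the class at $(8,1)$ with $\nu\iota_5$. In columns $6$ and $8$ nothing lies above the single surviving class, so no hidden $h_0$-extension is possible and each yields exactly $\bb{Z}/2$. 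In column $7$ the only extension question is whether $\eta^2\iota_5$ is twice an element of higher filtration in the tower; but $2\eta=0$ forces $2\eta^2\iota_5=0$, so $\eta^2\iota_5$ has order $2$ and splits off, giving $\pi_7^s\cong\bb{Z}\oplus\bb{Z}/2$.

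Finally I would rule out odd torsion, which I expect to be the main obstacle precisely because it is invisible to the $2$-primary chart. For $i\le 8$ the only candidate is a $\bb{Z}/3$ in $\pi_8^s$, arising through the bottom cell from the $3$-torsion of $\pi_3^s(S^0)=\bb{Z}/24$. In the Atiyah--Hirzebruch spectral sequence $H_p(\Sigma\bb{C}P^\infty_2;\pi_q^s(S^0))\Rightarrow\pi_{p+q}^s(\Sigma\bb{C}P^\infty_2)$ this class is the $3$-torsion of the summand $H_5\otimes\pi_3^s(S^0)$; a dimension count (cells only in odd degrees $\ge 5$) shows the one differential that can reach it is $d_4$ from $H_9\otimes\pi_0^s(S^0)$, which is computed by $\cal{P}^1$ because the first $3$-primary $k$-invariant of the sphere is $\alpha_1$. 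Since $\cal{P}^1 y_5 = 2y_9\ne 0$ in mod-$3$ cohomology, this $d_4$ is onto and kills the $\bb{Z}/3$. With odd torsion excluded, the $2$-local reading is the full integral answer, completing the table.
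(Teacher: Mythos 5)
Your proof is correct, and its $2$-primary core --- taking the displayed Adams chart, observing that every differential touching the columns $t-s\le 8$ vanishes, and reading off the groups --- is exactly the paper's argument (the chart itself is what the paper's Appendix establishes via an explicit minimal $\cal{A}$-resolution; you take it as given, which is fair since the lemma follows the chart). Where you genuinely diverge is in making the statement integral. The paper declares its Section 3 calculations $2$-local, so the identification of the $h_0$-towers with honest $\bb{Z}$'s, the splitting of $\pi_7^s$, and the absence of odd torsion are all left implicit; you supply each: the rational-homology rank count pinning the towers at $(5,0)$ and $(7,0)$ to copies of $\bb{Z}$, the names $\eta\iota_5$, $\eta^2\iota_5$, $\nu\iota_5$ for the classes at $(6,1)$, $(7,2)$, $(8,1)$ with $2\eta=0$ excluding the only possible hidden extension in column $7$, and the elimination of the lone odd-torsion candidate $\bb{Z}/3\subset\pi_8^s$. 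On that last point your route differs from the paper's: the paper disposes of the prime $3$ by a separate $3$-primary Adams computation with minimal resolutions (its appendix lemma gives $\pi_{2n+4}^s(\Sigma\bb{C}P^\infty_n)=0$ $3$-locally for $n\equiv 2 \bmod 3$, which covers $n=2$), whereas you use the Atiyah--Hirzebruch $d_4$ out of $H_9\otimes\pi_0^s$ detected by $\cal{P}^1$, with $\cal{P}^1 y_5=2y_9\neq 0$ --- the same mechanism the paper itself invokes in its $3$-local Section 4.3.2, so this is a legitimate and lighter substitute; the paper's resolution approach buys the uniform periodic statements needed later for all $n$, while yours buys a self-contained integral lemma for this one case. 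Two small imprecisions, neither damaging: the ``one differential that can reach'' $E_{5,3}$ is $d_4$ only after localizing at $3$ (integrally there is also $d_2$ from $E_{7,2}=H_7\otimes\pi_2^s$, whose source is $2$-torsion and hence cannot hit the $\bb{Z}/3$), and ``the first $3$-primary $k$-invariant of the sphere is $\alpha_1$'' is better phrased as: the $3$-component of the relevant stable attaching map is a multiple of $\alpha_1$, and $\alpha_1$ is detected by $\cal{P}^1$ in two-cell complexes.
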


\vspace{1mm}

\subsection{Recovery of Corollary \ref{cor3}} \label{scor3}
We now make use of Lemma \ref{sthomotopy} to compute $\{\bb{C}P^3, \Sigma\bb{C}P^{\infty}_2\}$. 
To start, consider the following long exact sequence
$$\cdots \rightarrow \{S^4, \Sigma\bb{C}P^{\infty}_2\} \longrightarrow \{S^5, \Sigma\bb{C}P^{\infty}_2\} \longrightarrow \{\Sigma \bb{C}P^2, \Sigma\bb{C}P^{\infty}_2\} \longrightarrow \{S^3, \Sigma\bb{C}P^{\infty}_2\} \rightarrow \cdots.$$
associated to the cofiber sequence
$S^3\stackrel{\eta}{\rightarrow} S^2 \rightarrow \bb{C}P^2$ (where $\eta$ denotes the Hopf map).
According to Lemma \ref{sthomotopy} the groups $\{S^3, \Sigma\bb{C}P^{\infty}_2\}$ and $\{S^4, \Sigma\bb{C}P^{\infty}_2\}$ both vanish. Therefore the middle two groups in the above sequence are isomorphic, and by Lemma \ref{sthomotopy} we obtain that
$$\{\Sigma \bb{C}P^2, \Sigma\bb{C}P^{\infty}_2\} \cong \{S^5, \Sigma\bb{C}P^{\infty}_2\} = \pi_5^s(\Sigma\bb{C}P^{\infty}_2) \cong \bb{Z}.$$

Next we consider the cofiber sequence $S^5\stackrel{\eta_3}{\rightarrow} \bb{C}P^2 \rightarrow \bb{C}P^3$, where $\eta_3$ denotes the attaching map of the top cell of $\bb{C}P^3$. In the following diagram
\begin{center}
\begin{tikzcd}
 \cdots \ar[r]  & \{\Sigma \bb{C}P^2, \Sigma\bb{C}P^{\infty}_2 \} \ar[r, "(\Sigma \eta_3)^*"] & \{S^6, \Sigma\bb{C}P^{\infty}_2 \} \ar[r] & \{ \bb{C}P^3, \Sigma\bb{C}P^{\infty}_2 \} \ar[r] & 0 \\
  & \{ S^5, \Sigma\bb{C}P^{\infty}_2 \} \cong \bb{Z} \ar[u, "\cong"] \ar[ur] & & & 
\end{tikzcd}
\end{center}
the top row is part of the long exact sequence associated with a cofiber sequence, and the vertical map in the triangle is the isomorphism we just analyzed. 
Let $q_i: \bb{C}P^i \rightarrow S^{2i}$ be the quotient map by the $(2i-1)$-skeleton.
By Lemma \ref{sthomotopy} we have $\{S^6, \Sigma\bb{C}P^{\infty}_2\} \cong \pi_6^s(\Sigma\bb{C}P^{\infty}_2) \cong \bb{Z}/2$.
Thus 
$$(\Sigma \eta_3)^*: \{\Sigma \bb{C}P^2, \Sigma\bb{C}P^{\infty}_2\} \longrightarrow \{S^6, \Sigma\bb{C}P^{\infty}_2\}$$ 
is seen to be a homomorphism $\bb{Z}\rightarrow \bb{Z}/2$, and $\{\bb{C}P^3, \Sigma\bb{C}P^{\infty}_2\}$ is the cokernel of this homomorphism. 
We claim that $(\Sigma \eta_3)^*: \bb{Z}\rightarrow \bb{Z}/2$ is the zero homomorphism. 
Note that the homomorphism $\{S^5, \Sigma\bb{C}P^{\infty}_2\}\rightarrow \{S^6, \Sigma\bb{C}P^{\infty}_2\}$ in the above diagram is induced by the composite
$$\big{(} S^6\stackrel{\Sigma \eta_3}{\longrightarrow} \Sigma\bb{C}P^2 \stackrel{\Sigma q_2}{\longrightarrow} S^5 \big{)} = \Sigma \big{(} S^5\stackrel{\eta_3}{\longrightarrow} \bb{C}P^2 \stackrel{q_2}{\longrightarrow} S^4 \big{)}.$$
Here we recall a general fact. Let $\eta_l: S^{2l-1}\rightarrow \bb{C}P^{l-1}$ denote the attaching map of the top cell of $\bb{C}P^l$.
Then the composite $S^{2l-1}\stackrel{\eta_l}{\longrightarrow} \bb{C}P^{l-1} \stackrel{q_{l-1}}{\longrightarrow} S^{2l-2}$ is the suspension of the Hopf map $S^3\stackrel{\eta}{\longrightarrow} S^2$ if $l$ is even, and is null when $l$ is odd. 
(This fact can be seen as a description of the structure of the stunted projective spaces $\bb{C}P^l_{l-1} = \bb{C}P^l/\bb{C}P^{l-2}$, $l\geq 3$. Indeed, it is detected by the action of $Sq^2$ that this two-cell complex is equivalent to $S^{2l}\vee S^{2l-2}$ if $l$ is odd, and is equivalent to $\Sigma^{2l-4}\bb{C}P^2$ when $l$ is even.) It follows that the composite 
$$\bb{Z}\cong \{S^5, \Sigma\bb{C}P^{\infty}_2\} \stackrel{\cong}{\longrightarrow} \{\Sigma \bb{C}P^2, \Sigma\bb{C}P^{\infty}_2\} \stackrel{(\Sigma \eta_3)^*}{\longrightarrow} \{S^6, \Sigma\bb{C}P^{\infty}_2\} \cong \bb{Z}/2$$
is induced by a null map, and is hence the zero homomorphism. This completes the proof that 
$\{\bb{C}P^3, \Sigma\bb{C}P^{\infty}_2\}\cong \bb{Z}/2$, and hence that of Corollary \ref{cor3}.

\subsection{Recovery of Corollary \ref{cor4}} \label{scor4}
We now show that 
$\{\bb{C}P^4, \Sigma\bb{C}P^{\infty}_2\} = 0,$  which recovers Corollary \ref{cor4}.
Rather than using long exact sequences associated to cofibration sequences,
 we organize the calculation with the Atiyah-Hirzebruch spectral sequence. 
Recall that $\{X, \Sigma^t Y\}$ can be regarded as a generalized cohomology theory for $X$.
Filtering by skeleta leads to the Atiyah-Hirzebruch Spectral Sequence (AHSS) for these stable maps with 
$E_2^{p, q} = H^p\big{(}X; \pi_{-q}(Y) \big{)}$,
which converges to $\{X, \Sigma^{p+q} Y\}$ if $X$ is a finite complex.
The AHSS computing $\{\bb{C}P^4, \Sigma\bb{C}P^{\infty}_2\}$ thus has
$$E_2^{p, q} = H^p\big{(}\bb{C}P^4; \pi_{-q}^s(\Sigma\bb{C}P^{\infty}_2)\big{)} \Longrightarrow \{X, \Sigma^{p+q+1} \bb{C}P^{\infty}_2 \}.$$
We show the $E_2$ term through a range below. As $E_2^{p, q}$ vanishes for all $p>8$ and for all $q>-5$,
 only the circled groups can contribute in total degree zero.

\begin{center}
\begin{tikzpicture}
  \matrix (m) [matrix of math nodes, nodes in empty cells, nodes={minimum width=5ex, minimum height=5ex, outer sep=-5pt}, column sep=1ex,row sep=1ex]{
        \quad\strut     &    &   2   & 3 &  4 & 5  & 6 & 7 & 8   & \geq 9 & & \strut  \\
				  \geq -4  &    &  0        & 0 & 0      & 0     &  0        & 0    & 0      & 0 & & \\
          -5       &    &  \bb{Z}   & 0 & \bb{Z} & 0 &  \bb{Z}   & 0    & \bb{Z} & 0 & & \\
          -6       &    &  \bb{Z}/2 & 0 &  \bb{Z}/2  & 0 & \ZZ & 0 & \bb{Z}/2 & 0 & & \\
					-7       &    &  \bb{Z}\oplus \bb{Z}/2 & 0 &  \bb{Z}\oplus\bb{Z}/2 & 0 & \bb{Z}\oplus\bb{Z}/2 & 0 & \bb{Z}\oplus\bb{Z}/2 & 0 & & \\
					-8       &    &  \bb{Z}/2 & 0 &  \bb{Z}/2 & 0 & \bb{Z}/2 & 0 & \ZZ & 0 & & \\
					};
					\draw[thick] (m-1-2.east) -- (m-6-2.east);
					\draw[thick] (m-1-1.south) -- (m-1-11.south);
					\draw[dashed,->] (m-3-5) -- (m-4-7);
					\draw[dashed,->] (m-5-7) -- (m-6-9);
\end{tikzpicture}
\end{center}

We analyze the relevant $d_2$ differentials, pictured above. By construction of the AHSS, the differential
$d_2: E_2^{2a, -b}\rightarrow E_2^{2a+2, -b-1}$ is 
induced by the connecting map $\bb{C}P^{a+1}/\bb{C}P^a \rightarrow \Sigma \bb{C}P^a/\bb{C}P^{a-1}$ in the cofiber sequence
$$\bb{C}P^{a}/\bb{C}P^{a-1} \rightarrow \bb{C}P^{a+1}/\bb{C}P^{a-1} \rightarrow \bb{C}P^{a+1}/\bb{C}P^a \rightarrow \Sigma \bb{C}P^a/\bb{C}P^{a-1}.$$
This connecting map, $S^{2a+2}\rightarrow S^{2a+1}$, both reflects and is determined by the structure of the two-cell stunted projective space $\bb{C}P^{a+1}/\bb{C}P^{a-1}$, and is detected by $\Sq^2$.
As  discussed in Section~\ref{scor3}, when $a$ is odd this  connecting map is
a suspension of $\eta: S^3\rightarrow S^2$, and when $a$ is even it is null.
Furthermore,
$$E_2^{2a, -b} = \pi_b^s(\Sigma\bb{C}P^{\infty}_2), \text{ and } E_2^{2a+2, -b-1} = \pi_{b+1}^s(\Sigma\bb{C}P^{\infty}_2).$$
In summary, $d_2: E_2^{2a, -b} \rightarrow E_2^{2a+2, -b-1}$ is a homomorphism $\pi_b^s(\Sigma\bb{C}P^{\infty}_2) \rightarrow \pi_{b+1}^s(\Sigma\bb{C}P^{\infty}_2)$, which is multiplication by $\eta$ when $a$ is odd, and is zero when $a$ is even.

For example, taking $a=3$ and $b=6$, one concludes that $E_2^{6, -6}\rightarrow E_2^{8, -7}$ is the homomorphism $\bb{Z}\rightarrow \bb{Z}\oplus \bb{Z}/2$ induced by $\eta$. This homomorphism is onto the second summand of $\bb{Z}\oplus \bb{Z}_2$, due to the fact that on the Adams $E_2$-page the dot at $(5, 0)$ is connected with that at $(6, 1)$ via multiplication by $h_1$.
Similarly, one deduces that $E_2^{6, -7}\rightarrow E_2^{8, -8}$ is the homomorphism $\bb{Z}\oplus \bb{Z}/2 \rightarrow \bb{Z}/2$ which is the surjection when restricted to the first summand and is zero when restricted to the second. 

We conclude that $E_r^{p, -p} = 0$ for all $p$ and all $r\geq 3$. So $\{\bb{C}P^4, \Sigma\bb{C}P^{\infty}_2\} = 0$, and Corollary \ref{cor4} is proved.

\section{Proof of Main Theorems}

We prove the main results of the paper, Theorems \ref{firstunstable} and \ref{secondunstable}, which give
 vanishing Chern enumeration for bundles of rank $(l-1)$ and $(l-2)$ over $\bb{C}P^l$. Both these cases are in the metastable range, so by Theorem \ref{reduction} there are identifications
$$\Vect_{l-1}^0(\bb{C}P^l) \cong \{\bb{C}P^l, \Sigma \bb{C}P^{\infty}_{l-1} \} \quad \text{and} \quad \Vect_{l-2}^0(\bb{C}P^l) \cong \{\bb{C}P^l, \Sigma\bb{C}P^{\infty}_{l-2} \}.$$

We perform these stable calculations, following the strategy of Section~\ref{scor4} to regard them as generalized cohomology computations.

\subsection{First Stable Homotopy Groups of \texorpdfstring{$\Sigma\bb{C}P^{\infty}_n$}{CPinfinityn}}

Both the calculations of $\{\bb{C}P^l, \Sigma\bb{C}P^{\infty}_{l-1} \}$ and $\{\bb{C}P^l, \Sigma \bb{C}P^{\infty}_{l-2} \}$ rely on the knowledge of some first few 2-primary stable homotopy groups of $\Sigma\bb{C}P^{\infty}_n$. The latter also requires knowledge of the 3-primary groups.
This subsection is dedicated to presenting the results of these calculations, with  details postponed to the Appendix.
 
The cohomology of $\Sigma\bb{C}P^{\infty}_n$ has a single $\bb{Z}/2$-generator $y_{2n+2k+1}$ in every odd degree greater than or equal to $2n+1$. 
The class $y_{2n+2k+1}$ can be identified with the suspension of the class $x^{n+k}$ in the cohomology of $\bb{C}P^{\infty}$, to compute the action of the Steenrod squares. 
The stable homotopy groups $\pi_i^s(\Sigma\bb{C}P^{\infty}_n)$ within the range $i\leq 2n+4$ require only the information of the Steenrod square actions on the finite skeleton $\Sigma \bb{C}P^{n+4}_n$ of $\Sigma \bb{C}P^{\infty}_n$. 
These actions, and hence the Adams $E_2$-pages, exhibit an 8-fold periodic behavior. We obtain the following result, to be proved in the Appendix.

\begin{lemma} \label{sthomotopygeneral} The 2-primary stable homotopy groups $\pi_i^s(\Sigma\bb{C}P^{\infty}_n)$ for $i \leq 2n+4$ can be described as follows.
\begin{enumerate}
	\item $\pi_i^s(\Sigma\bb{C}P^{\infty}_n) = 0$ whenever $i\leq 2n$, and $\pi_{2n+1}^s(\Sigma\bb{C}P^{\infty}_n) = \bb{Z}$.
	\item $\pi_{2n+2}^s(\Sigma\bb{C}P^{\infty}_n) = 0$ if $n$ is odd, and $\pi_{2n+2}^s(\Sigma\bb{C}P^{\infty}_n) = \bb{Z}/2$ if $n$ is even.
	\item $\pi_{2n+3}^s(\Sigma\bb{C}P^{\infty}_n) = \bb{Z}$ if $n$ is odd, and $\pi_{2n+3}^s(\Sigma\bb{C}P^{\infty}_n) = \bb{Z}\oplus \bb{Z}/2$ if $n$ is even.
	\item $\pi_{2n+4}^s(\Sigma\bb{C}P^{\infty}_n)$ exhibits the following 8-fold periodicity.
	    
\begin{center}
\begin{tabular}{c|c|c|c|c|c|c|c|c}
\hline
$n \mod 8$ & $0$ & $1$ & $2$ & $3$ & $4$ & $5$ & $6$ & $7$ \\
\hline
$\pi_{2n+4}^s(\Sigma\bb{C}P^{\infty}_n)$ & $\bb{Z}/8$ & $\bb{Z}/4$ & $\bb{Z}/2$ & $0$ & $\bb{Z}/4$ & $\bb{Z}/2$ & $\bb{Z}/2$ & $0$ \\
\hline
\end{tabular}
\end{center}
	
\end{enumerate}
\end{lemma}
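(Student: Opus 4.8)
The plan is to compute the Adams $E_2$-page for $\pi_*^s(\Sigma\bb{C}P^\infty_n)$ in the range $i\le 2n+4$, at the prime $2$, using the Steenrod action already described (a single generator $y_{2n+2k+1}$ in each odd degree, with $\Sq^2, \Sq^4, \Sq^8$ acting as on $\bb{C}P^\infty$). Since we only need homotopy through $i=2n+4$, it suffices to work with the truncated complex $\Sigma\bb{C}P^{n+4}_n$, whose cohomology is a five-cell $\cal{A}$-module $\{y_{2n+1}, y_{2n+3}, y_{2n+5}, y_{2n+7}, y_{2n+9}\}$. Because the binomial-coefficient formula $\Sq^{2i}x^j = \binom{j}{i}x^{j+i}$ depends only on $j \bmod 2^k$ for the relevant squares, the $\cal{A}$-module structure on this five-cell complex depends only on $n \bmod 8$, which is precisely the source of the claimed $8$-fold periodicity. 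The first step, then, is to tabulate the eight $\cal{A}$-module types as $n$ ranges over residues mod $8$, recording exactly which of $\Sq^2, \Sq^4, \Sq^8$ connect which cells.

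For each of the eight cases I would construct a minimal free $\cal{A}$-resolution through the relevant internal degrees and read off $\ext^{s,t}_{\cal{A}}$ in the wedge $t-s \le 2n+4$. Parts (1)--(3) are uniform and easy: $4$-connectivity gives vanishing for $i\le 2n$; the bottom cell $y_{2n+1}$ contributes the free summand $\bb{Z}$ in degree $2n+1$ (an $h_0$-tower); the class in degree $2n+2$ and the $h_1$-multiplication governing it are controlled entirely by whether $\Sq^2$ acts nontrivially $y_{2n+1}\to y_{2n+3}$, i.e.\ by the parity of $n$ (since $\Sq^2 x^n = n\,x^{n+1} \bmod 2$). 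This parity dichotomy directly yields the stated splitting in (2) and (3), after checking there is no room for Adams differentials in this low range (the relevant columns have $t-s\le 2n+3$ and the filtration is too short to support a $d_r$).

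The substance is part (4), the group $\pi_{2n+4}^s$ in column $t-s=2n+4$. Here the answer varies through all of $\bb{Z}/8, \bb{Z}/4, \bb{Z}/2, 0$, so the $h_0$-tower height in that column must be computed separately in each residue class. This is governed by the iterated interaction of $\Sq^2, \Sq^4$ and $\Sq^8$ among the top cells, mirroring the classical computation of the order of the image-of-$J$ / $\mu$-family elements; indeed the appearance of $\bb{Z}/8$ (from a full $\Sq^8$ connection) versus $\bb{Z}/4, \bb{Z}/2, 0$ tracks exactly how many of these Steenrod operations survive in the given case, which is the arithmetic encoded in the $y_5,\dots,y_{17}$ diagram generalized to the $n$-th suspension. \textbf{The hard part} will be pinning down the $h_0$-divisibility in column $t-s = 2n+4$ in the two cases giving higher torsion ($n\equiv 0$, giving $\bb{Z}/8$, and $n\equiv 1,4$, giving $\bb{Z}/4$): one must verify that the Adams tower truly has height $3$ (resp.\ $2$), which requires either computing $\ext$ far enough up the $s$-direction to see the tower terminate, or invoking a hidden $\Sq^8$-relation, and then confirming no $d_2$ or $d_3$ differential truncates it. I expect this divisibility bookkeeping — distinguishing genuine $h_0$-towers from $h_0$-extensions that are resolved only at higher filtration — to be where the real care is needed, and I would cross-check the delicate cases against the known order of the relevant stable stems and the $\mu$-family, deferring the explicit minimal resolutions to the Appendix as the statement promises.
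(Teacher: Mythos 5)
Your overall architecture matches the paper's: truncate to the five-cell complex $\Sigma\bb{C}P^{n+4}_n$, observe via the binomial-coefficient formula that its $\cal{A}$-module structure depends only on $n \bmod 8$, build minimal resolutions case by case, and read off the Adams $E_2$-page; parts (1)--(3) then follow exactly as you say from the parity of $\Sq^2$ on the bottom cell (modulo a slip: the relevant connectivity is $2n$, not $4$). The genuine gap is in part (4), in the four residue classes $n\equiv 0,1,4,5 \bmod 8$, and your proposed way through it would fail. In the computed range, the $E_2$-pages for $n\equiv 0$ and $n\equiv 4$ are \emph{identical} (a height-three $h_0$-string in column $t-s=2n+4$, with an $h_0$-tower in column $2n+5$ poised to hit its top class by a $d_2$), and likewise for $n\equiv 1$ versus $n\equiv 5$ (a height-two string). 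So no amount of extending the minimal resolution up the $s$-direction, nor any ``hidden $\Sq^8$-relation,'' can distinguish these cases: the distinction is carried by an honest Adams $d_2$-differential, which Ext computations cannot decide. Worse, your framing --- ``confirming no $d_2$ or $d_3$ differential truncates it'' --- is the opposite of what happens in half the cases: for $n\equiv 4$ the answer $\bb{Z}/4$ arises precisely because the $d_2$ \emph{is} nonzero and truncates the height-three string, and similarly $n\equiv 5$ gives $\bb{Z}/2$ from a nonzero $d_2$ on the height-two string.

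The paper settles these differentials not internally but by importing the abutment from outside: Toda's theorem identifies $\pi_{2n+4}^s(\Sigma\bb{C}P^{\infty}_n) = \pi_{2n+3}^s(\bb{C}P^{\infty}_n)$ with the metastable unitary group $\pi_{2n+3}U(n)$, and Matsunaga computed these $2$-locally to be $\bb{Z}/8,\ \bb{Z}/4,\ \bb{Z}/4,\ \bb{Z}/2$ for $n\equiv 0,4,1,5 \bmod 8$; knowing the answer then forces the $d_2$ to vanish for $n\equiv 0,1$ and to be an isomorphism on the relevant class for $n\equiv 4,5$. Your instinct to ``cross-check against the known order of the relevant stable stems and the $\mu$-family'' gestures in this direction but misidentifies the needed input: the relevant orders are not those of sphere stems or image-of-$J$ elements but of stable homotopy of stunted projective spaces (equivalently, metastable homotopy of $U(n)$), which is exactly the quantity being computed --- so treating it as a sanity check rather than citing an independent computation (Toda--Matsunaga, or comparable results of Mosher) leaves the argument circular at its crux.
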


\subsection{The First Unstable Case}
We compute $\{\bb{C}P^l, \Sigma\bb{C}P^{\infty}_{l-1} \}$ through analysis of the
 AHSS, which has 
$$E_2^{p, q} = H^p\big{(}\bb{C}P^l; \pi_{-q}^s(\Sigma\bb{C}P^{\infty}_{l-1})\big{)} \Longrightarrow \{ \bb{C}P^l, \Sigma^{p+q+1} \bb{C}P^{\infty}_{l-1} \}.$$

When $l$ is even,  $\pi_{2l-1}^s(\Sigma\bb{C}P^{\infty}_{l-1}) = \bb{Z}$ and $\pi_{2l}^s(\Sigma\bb{C}P^{\infty}_{l-1}) = 0$ by Lemma \ref{sthomotopygeneral}, and therefore the $E_2$-page has the following form:

\begin{center}
\begin{tikzpicture}
  \matrix (m) [matrix of math nodes, nodes in empty cells, nodes={minimum width=5ex, minimum height=5ex, outer sep=-5pt}, column sep=1ex,row sep=1ex]{
        \quad\strut     &    &   2   & 3 &  4   &   \cdots  & \cdots  & 2l-2 & 2l-1 & 2l  & \geq 2l+1 &  &\strut  \\
					\geq -(2l-2)  &    &  0   & 0 &  0  &   \cdots  & \cdots  & 0   & 0    & 0 & 0 &  & \\
          -(2l-1)       &    &  \bb{Z}   & 0 &  \bb{Z}  &   \cdots  & \cdots  & \bb{Z}   & 0    & \bb{Z} & 0 & & \\
          -2l           &    &  0 & 0 &  0  &  \cdots & \cdots  & 0 & 0 & 0 & 0 &  &\\};
					\draw[thick] (m-1-2.east) -- (m-4-2.east);
					\draw[thick] (m-1-1.south) -- (m-1-12.south);
\end{tikzpicture}.
\end{center}

The terms $E_2^{p, q}$ vanish for all $p>2l$ and for all $q>-(2l-1)$. Since all groups are zero along the diagonal $p+q = 0$, one concludes immediately that 
$\{\bb{C}P^l, \Sigma\bb{C}P^{\infty}_{l-1} \} = 0$. 

When $l$ is odd, $\pi_{2l-1}^s(\Sigma\bb{C}P^{\infty}_{l-1}) = \bb{Z}$ and $\pi_{2l}^s(\Sigma\bb{C}P^{\infty}_{l-1}) = \bb{Z}/2$ by Lemma \ref{sthomotopygeneral}, and therefore the $E_2$-page has the following form:

\begin{center}
\begin{tikzpicture}
  \matrix (m) [matrix of math nodes, nodes in empty cells, nodes={minimum width=5ex, minimum height=5ex, outer sep=-5pt}, column sep=1ex,row sep=1ex]{
        \quad\strut     &    &   2   & 3 &  4   &   \cdots  & \cdots  & 2l-2 & 2l-1 & 2l & \geq 2l+1  &  &\strut  \\
				  \geq -(2l-2)  &    &  0   & 0 &  0  &   \cdots  & \cdots  & 0   & 0    & 0 & 0 & & \\
          -(2l-1)       &    &  \bb{Z}   & 0 &  \bb{Z}  &   \cdots  & \cdots  & \bb{Z}   & 0    & \bb{Z} & 0 & & \\
          -2l           &    &  \bb{Z}/2 & 0 &  \bb{Z}/2  &  \cdots & \cdots  & \bb{Z}/2 & 0 & \ZZ & 0 & &\\};
					\draw[thick] (m-1-2.east) -- (m-4-2.east);
					\draw[thick] (m-1-1.south) -- (m-1-12.south);
					\draw[dashed,->] (m-3-8) -- (m-4-10);
\end{tikzpicture}.
\end{center}

We thus determine the differential $d_2: \bb{Z} = E_2^{2l-2, -(2l-1)} \rightarrow E_2^{2l, -2l} = \bb{Z}/2$.
As was previously analyzed, this is a homomorphism $\pi_{2l-1}^s(\Sigma \bb{C}P^{\infty}_{l-1}) \rightarrow \pi_{2l}^s(\Sigma \bb{C}P^{\infty}_{l-1})$
induced by a map $S^{2l} \rightarrow S^{2l-1}$, which is a suspension of $\eta: S^3\rightarrow S^2$ when $l$ is even, and null when $l$ is odd.
Thus the above differential is the zero as $l$ is odd, and the circled group $\bb{Z}/2$ survives to the infinity page. 
Since this is the only nonzero group along the diagonal $p+q=0$ on that page, 
we conclude that $\{\bb{C}P^l, \Sigma\bb{C}P^{\infty}_{l-1} \} = \bb{Z}/2$.

We have proved that $\{\bb{C}P^l, \Sigma\bb{C}P^{\infty}_{l-1} \} = 0$ when $l$ is even, and $\{\bb{C}P^l, \Sigma\bb{C}P^{\infty}_{l-1} \} = \bb{Z}/2$ when $l$ is odd, completing the proof of Theorem \ref{firstunstable}.

\subsection{The Second Unstable Case}
We now prove Theorem \ref{secondunstable}, by calculating $\{\bb{C}P^l, \Sigma\bb{C}P^{\infty}_{l-2} \}$. In this case both the prime 2 and the prime 3 are involved. 
We  carry out the 2-local calculation in Subsection 4.3.1, and the 3-local computation in Subsection 4.3.2. The results are as follows.

\begin{theorem} \label{2local}
Let $l\geq 4$ be an integer. Then $\{\bb{C}P^l, \Sigma\bb{C}P^{\infty}_{l-2} \}_{(2)}$ exhibits the following 8-fold periodic behavior.
\begin{center}
\begin{tabular}{c | c | c | c | c | c | c | c | c }
\hline
$l \mod 8$ & $0$ & $1$ & $2$ & $3$ & $4$ & $5$ & $6$ & $7$ \\
\hline
$\{\bb{C}P^l, \Sigma\bb{C}P^{\infty}_{l-2} \}_{(2)}$ & $0$ & $0$ & $\bb{Z}/4$ & $\bb{Z}/2$ & $0$ & $0$ & $\bb{Z}/2$ & $\bb{Z}/2$ \\
\hline
\end{tabular}
\end{center}
\end{theorem}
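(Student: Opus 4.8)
The plan is to run the Atiyah--Hirzebruch spectral sequence exactly as in Section~\ref{scor4} and in the first unstable case, now with coefficient spectrum $\Sigma\bb{C}P^{\infty}_{l-2}$, and to read the answer off a single spot on the total-degree-zero antidiagonal. Its $E_2$-page is $E_2^{p,q}=H^p(\bb{C}P^l;\pi_{-q}^s(\Sigma\bb{C}P^{\infty}_{l-2}))$, converging to $\{\bb{C}P^l,\Sigma^{p+q+1}\bb{C}P^{\infty}_{l-2}\}$, and the coefficients are supplied by Lemma~\ref{sthomotopygeneral} with $n=l-2$: they vanish below degree $2l-3$ and, in the range we need, are concentrated in degrees $2l-3,\,2l-2,\,2l-1,\,2l$, with the top group $\pi_{2l}^s$ carrying the $8$-fold periodicity. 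Since $H^p(\bb{C}P^l)$ is nonzero only in even degrees $0\le p\le 2l$, the antidiagonal $p+q=0$ has at most the two nonzero entries $A=E_2^{2l-2,-(2l-2)}\cong\pi_{2l-2}^s$ and $B=E_2^{2l,-2l}\cong\pi_{2l}^s$. Because all odd columns vanish, only even differentials can act, and a degree count confines the relevant ones to $d_2$ and $d_4$.

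First I would show that $A$ never survives, so that there is no extension problem and the whole group is the surviving quotient of $B$. For $l$ odd, $A=\pi_{2l-2}^s=0$ by Lemma~\ref{sthomotopygeneral}(2). For $l$ even, $A\cong\bb{Z}/2$, and I claim its outgoing $d_2$ is injective. As recalled in Section~\ref{scor4}, the $d_2$ out of the column $p=2a$ is multiplication by $\eta$ exactly when $a$ is odd; for $A$ this is $a=l-1$, which is odd, so the differential is $\eta\cdot\colon\pi_{2l-2}^s\to\pi_{2l-1}^s$. The generator of $\pi_{2l-2}^s$ is $\eta$ times the bottom class, and $\eta^2$ times the bottom class is nonzero (the relevant periodic Adams chart of the Appendix has an $h_1^2$ over the bottom-cell tower), so this $d_2$ is injective and $A$ dies at $E_3$. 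Hence $\{\bb{C}P^l,\Sigma\bb{C}P^{\infty}_{l-2}\}_{(2)}\cong E_\infty^{2l,-2l}$, a single cyclic quotient of $\pi_{2l}^s$.

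It then remains to compute the two differentials landing in $B$. The incoming $d_2$ comes from $D=E_2^{2l-2,-(2l-1)}\cong\pi_{2l-1}^s$; by the same rule it is $\eta$-multiplication $\pi_{2l-1}^s\to\pi_{2l}^s$ when $l$ is even and zero when $l$ is odd, and its image I would read from the $h_1$-action near the top of the periodic Adams charts. The incoming $d_4$ comes from $C=E_2^{2l-4,-(2l-3)}\cong\pi_{2l-3}^s\cong\bb{Z}$ (whose own outgoing $d_2$ is zero in both parities, so $C$ survives to $E_4$), and this is the crucial differential. Geometrically it is governed not by a two-cell subquotient of $\bb{C}P^l$ but by a three-cell stunted projective space $\bb{C}P^{a}/\bb{C}P^{a-3}$, whose attaching-map structure is precisely what Mosher's computations \cite{Mosher} pin down. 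I expect this $d_4$ to be the main obstacle: it is a homomorphism $\bb{Z}\to\pi_{2l}^s$ whose image must have the index dictated by the target table---for instance, image of order $2$ when $l\equiv 3\pmod 8$ (cutting $\bb{Z}/4$ down to $\bb{Z}/2$) and zero when $l\equiv 7\pmod 8$---and establishing its value in every residue class is where the real work lies.

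Finally I would assemble the table by running through the eight residues $l\bmod 8$. In each case $\pi_{2l}^s$ is given by Lemma~\ref{sthomotopygeneral}(4) (with $n=l-2$), and $\{\bb{C}P^l,\Sigma\bb{C}P^{\infty}_{l-2}\}_{(2)}$ is the quotient of $\pi_{2l}^s$ by the subgroup generated by the images of $d_2$ and $d_4$. The odd classes $l\equiv 1,3,5,7$ involve only $\pi_{2l}^s$ and the $d_4$, while the even classes $l\equiv 0,2,4,6$ additionally feel the $\eta$-differential from $D$; a uniform check across the eight classes yields the stated $8$-fold periodic behavior.
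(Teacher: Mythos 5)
Your plan matches the paper's proof essentially step for step: the same AHSS with coefficients taken from Lemma \ref{sthomotopygeneral} at $n=l-2$, the same rule that the $d_2$-differentials are $\eta$-multiplications governed by $\Sq^2$ on two-cell stunted quotients and read off the $h_1$-structure of the periodic Adams charts, and the same identification of the decisive $d_4$ into $E_4^{2l,-2l}$ with the attaching structure of the three-cell complexes $\bb{C}P^l/\bb{C}P^{l-3}$, resolved by Mosher. The only ingredient you leave unquoted is Mosher's explicit table (the paper's Lemma \ref{AHSSd4}, giving $\lambda_l = \nu, \nu, 0, 2\nu, \nu, \nu, 2\nu, 0$ for $l \equiv 0, \dots, 7 \bmod 8$), which supplies precisely the residue-class values of the $d_4$ that you correctly flag as the remaining work.
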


\begin{theorem} \label{3local}
Let $l\geq 4$ be an integer. Then $\{\bb{C}P^l, \Sigma\bb{C}P^{\infty}_{l-2} \}_{(3)}$ exhibits the following 3-fold periodic behavior.
\begin{enumerate}
	\item The group vanishes whenever $l$ is $0$ or $1$ mod $3$.
	\item The group is isomorphic to $\bb{Z}/3$ when $l$ is $2$ mod 3.
\end{enumerate}
\end{theorem}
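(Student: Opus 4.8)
The plan is to run the Atiyah--Hirzebruch spectral sequence exactly as in Section~\ref{scor4}, but $3$-localized throughout:
$$E_2^{p,q} = H^p\big(\bb{C}P^l;\, \pi_{-q}^s(\Sigma\bb{C}P^{\infty}_{l-2})_{(3)}\big) \Longrightarrow \{\bb{C}P^l, \Sigma^{p+q+1}\bb{C}P^{\infty}_{l-2}\}_{(3)},$$
and to read off the antidiagonal $p+q=0$. Write $n=l-2$, so the target $\Sigma\bb{C}P^{\infty}_{n}$ has bottom cell in dimension $2n+1=2l-3$ and is $(2l-4)$-connected. Since $\bb{C}P^l$ has cohomology concentrated in even degrees $0\le p\le 2l$, the only lattice points on $p+q=0$ that can possibly be nonzero are $p=2l-2$ and $p=2l$, carrying the coefficient groups $\pi_{2n+2}^s$ and $\pi_{2n+4}^s$ respectively. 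So the entire computation reduces to understanding two $3$-primary stable stems and the differentials touching the top corner.

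First I would pin down the $3$-primary coefficients, which are governed entirely by $\alpha_1\in\pi_3^s$ and the Steenrod power $\mathcal{P}^1$. The action on $H^*(\Sigma\bb{C}P^{\infty}_n;\bb{F}_3)$ is $\mathcal{P}^1 y_{2m+1} = m\,y_{2m+5}$, recording the $\alpha_1$-attaching map joining the cells in dimensions $2m+1$ and $2m+5$. A short analysis of the relevant $2$- and $3$-cell subquotients (the intermediate $(2n+3)$-cell contributes nothing since $\pi_1^s=\pi_2^s=0$ at $3$) shows that the subcomplex on cells $2n+1$ and $2n+5$ is, $3$-locally, the cofiber of $n\cdot\alpha_1\colon S^{2n+4}\to S^{2n+1}$. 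Computing $\pi_{2n+4}^s$ as the cokernel of composition with $n\,\alpha_1$, namely $\bb{Z}=\pi_0^s\to\pi_3^s=\bb{Z}/3$, yields
$$\pi_{2n+2}^s(\Sigma\bb{C}P^{\infty}_n)_{(3)}=0 \quad\text{always,}\qquad \pi_{2n+4}^s(\Sigma\bb{C}P^{\infty}_n)_{(3)}= \begin{cases} \bb{Z}/3 & n\equiv 0 \bmod 3,\\ 0 & \text{otherwise.}\end{cases}$$
These are the content of the $3$-primary part of the Appendix. Consequently $E_2^{2l-2,-(2l-2)}=0$, and the only possibly-nonzero term on the antidiagonal is $E_2^{2l,-2l}=H^{2l}(\bb{C}P^l;\pi_{2n+4}^s)$, which is $\bb{Z}/3$ precisely when $l\equiv 2 \bmod 3$.

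It then remains to show this top-corner class survives. Lying in maximal cohomological degree $p=2l$, it supports no outgoing differentials, so I must kill the incoming $d_r$. Odd $r$ vanish by parity (the source sits in odd cohomological degree $2l-r$), and a coefficient with $\pi^s_{2l-r+1}\ne 0$ forces $r\le 4$; thus only $d_2$ and $d_4$ are in play. As identified in Section~\ref{scor4}, $d_2$ is multiplication by $\eta$ on coefficients, which is zero after $3$-localization. This leaves the single differential $d_4\colon E_4^{2l-4,-(2l-3)}\to E_4^{2l,-2l}$, a map $\pi_{2n+1}^s=\bb{Z}\to\pi_{2n+4}^s=\bb{Z}/3$.

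The main obstacle is exactly this $d_4$: it is the $3$-primary analog of the $\eta$-differential, detected by the secondary structure of $\bb{C}P^l$, namely the $\mathcal{P}^1$-attaching map joining the cells in dimensions $2l-4$ and $2l$, which carries coefficient $\mathcal{P}^1 x^{l-2}=(l-2)\,x^l$. Determining it rigorously requires analyzing the relevant $3$-cell stunted projective subquotient of $\bb{C}P^l$ and invoking Mosher's computations of these attaching maps, and I expect this to be the delicate step. The resolution, however, is a clean coincidence of congruences: the target $E_2^{2l,-2l}$ is nonzero only when $l\equiv 2\bmod 3$, i.e.\ when $l-2\equiv 0\bmod 3$, which is exactly the case in which the $\mathcal{P}^1$-coefficient $(l-2)$ vanishes mod $3$, so $d_4$ carries no $\alpha_1$-component and is zero whenever its target is nontrivial. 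Hence the $\bb{Z}/3$ survives to $E_\infty$, and since at most one group lies on the antidiagonal there are no extension problems, giving $\{\bb{C}P^l,\Sigma\bb{C}P^{\infty}_{l-2}\}_{(3)}\cong\bb{Z}/3$ for $l\equiv 2\bmod 3$ and $0$ otherwise.
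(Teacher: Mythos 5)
Your proposal is correct and follows essentially the same route as the paper: the same 3-localized AHSS with only the two antidiagonal entries $p=2l-2,\,2l$ in play, the same observation that $d_2$ acts on coefficients by $\eta$ and hence vanishes 3-locally, and the same key step that the lone dangerous differential $d_4\colon E_4^{2l-4,-(2l-3)}\to E_4^{2l,-2l}$ is controlled by $\mathcal{P}^1(x^{l-2})=(l-2)x^l$, which vanishes mod 3 precisely when the target $\bb{Z}/3$ is nonzero. The only divergences are minor: you obtain the 3-primary coefficient groups (the paper's Lemma~\ref{sthomotopygeneral3}) by elementary cofiber-sequence arguments with $\alpha_1$-attaching maps rather than the appendix's minimal $\cal{A}$-resolutions, and your anticipated appeal to Mosher is unnecessary at $p=3$ --- the paper invokes Mosher only for the 2-primary $d_4$, while the $\mathcal{P}^1$-coefficient argument you also supply is exactly the paper's (complete) argument.
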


Moreover, $\{\bb{C}P^l, \Sigma\bb{C}P^{\infty}_{l-2} \}$ has no $p$-torsion for $p\geq 5$. Combining Theorems \ref{2local} and \ref{3local}, one obtains immediately the enumerations in Theorem \ref{secondunstable}.

\subsubsection{Calculations at the prime 2}
We prove Theorem \ref{2local}, with 2-local computations throughout.
We regard $\{\bb{C}P^l, \Sigma\bb{C}P^{\infty}_{l-2} \}$ again as a cohomology calculation and apply the AHSS,
which has
$$E_2^{p, q} = H^p\big{(}\bb{C}P^l; \pi_{-q}^s(\Sigma\bb{C}P^{\infty}_{l-2})\big{)} \Longrightarrow \{ \bb{C}P^l, \Sigma^{p+q+1} \bb{C}P^{\infty}_{l-2} \}.$$

By Lemma \ref{sthomotopygeneral}, stable homotopy groups of $\Sigma\bb{C}P^{\infty}_{l-2}$ can be summarized as follows.

\begin{center}
\begin{tabular}{c | c | c | c | c | c }
\hline
   & $\pi_{< 2l-3}^s(\Sigma\bb{C}P^{\infty}_{l-2})$ & $\pi_{2l-3}^s(\Sigma\bb{C}P^{\infty}_{l-2})$ & $\pi_{2l-2}^s(\Sigma\bb{C}P^{\infty}_{l-2})$ & $\pi_{2l-1}^s(\Sigma\bb{C}P^{\infty}_{l-2})$ & $\pi_{2l}^s(\Sigma\bb{C}P^{\infty}_{l-2})$  \\
\hline
  $l=0$ mod 8 & 0 & $\bb{Z}$ & $\bb{Z}/2$ & $\bb{Z}\oplus \bb{Z}/2$ & $\bb{Z}/2$ \\
	$l=1$ mod 8 & 0 & $\bb{Z}$ & $0$ & $\bb{Z}$ & $0$ \\
	$l=2$ mod 8 & 0 & $\bb{Z}$ & $\bb{Z}/2$ & $\bb{Z}\oplus \bb{Z}/2$ & $\bb{Z}/8$ \\
  $l=3$ mod 8 & 0 & $\bb{Z}$ & $0$ & $\bb{Z}$ & $\bb{Z}/4$ \\
	$l=4$ mod 8 & 0 & $\bb{Z}$ & $\bb{Z}/2$ & $\bb{Z}\oplus \bb{Z}/2$ & $\bb{Z}/2$ \\
	$l=5$ mod 8 & 0 & $\bb{Z}$ & $0$ & $\bb{Z}$ & $0$ \\
	$l=6$ mod 8 & 0 & $\bb{Z}$ & $\bb{Z}/2$ & $\bb{Z}\oplus \bb{Z}/2$ & $\bb{Z}/4$ \\
	$l=7$ mod 8 & 0 & $\bb{Z}$ & $0$ & $\bb{Z}$ & $\bb{Z}/2$ \\
\hline
\end{tabular}
\end{center}

Therefore the AHSS also exhibits an 8-fold periodic behavior. 
Since the proofs of these eight cases are similar, we shall present a proof for one case and omit the details for the others. 
Let us consider the case $l = 2$ mod 8. In this case our goal is to show that, 2-locally, $\{\bb{C}P^l, \Sigma\bb{C}P^{\infty}_{l-2}\} \cong \bb{Z}/4$.

Part of the $E_2$-page of the spectral sequence is presented below. The groups $E_2^{p, q}$ vanish for all $p>2l$ and for all $q>-(2l-3)$, so only the circled groups can contribute in total degree zero.

\begin{center}
\begin{tikzpicture}
  \matrix (m) [matrix of math nodes, nodes in empty cells, nodes={minimum width=5ex, minimum height=5ex, outer sep=-5pt}, column sep=1ex,row sep=1ex]{
        \quad\strut     &    &   \cdots  &  2l-4 & 2l-3  & 2l-2 & 2l-1 & 2l   & \geq 2l+1 &  \strut  \\
				  \geq -(2l-4)  &    &    \cdots   & 0      & 0     & 0        & 0    & 0      & 0 & \\
          -(2l-3)       &    &    \cdots  & \bb{Z} & 0 & \bb{Z}   & 0    & \bb{Z} & 0 &  \\
          -(2l-2)       &    &    \cdots  & \bb{Z}/2 & 0 & \ZZ & 0 & \bb{Z}/2 & 0 &  \\
					-(2l-1)       &    &   \cdots & \bb{Z}\oplus\bb{Z}/2 & 0 & \bb{Z}\oplus\bb{Z}/2 & 0 & \bb{Z}\oplus\bb{Z}/2 & 0 &  \\
					-2l           &    &    \cdots & \bb{Z}/8 & 0 & \bb{Z}/8 & 0 & \ZZZ & 0 &  \\
					};
					\draw[thick] (m-1-2.east) -- (m-6-2.east);
					\draw[thick] (m-1-1.south) -- (m-1-10.south);
					\draw[dashed,->] (m-3-4) -- (m-4-6);
					\draw[dashed,->] (m-3-6) -- (m-4-8);
					\draw[dashed,->] (m-4-6) -- (m-5-8);
					\draw[dashed,->] (m-5-6) -- (m-6-8);
\end{tikzpicture}
\end{center}

We first analyze the relevant $d_2$-differentials. 
As was discussed in Section~\ref{scor4}, they are determined by the structures of certain two-cell stunted projective spaces, where the attaching maps are detected by $\Sq^2$. 
For example, for each $q$ the differential $d_2^{2l-2, q}: E_2^{2l-2, q} \longrightarrow E_2^{2l, q-1}$ is a homomorphism induced by the map 
$\bb{C}P^l/\bb{C}P^{l-1} \rightarrow \Sigma \bb{C}P^{l-1}/\bb{C}P^{l-2}$, which is part of the cofiber sequence defining $\bb{C}P^l/\bb{C}P^{l-2}$.
This map $S^{2l}\rightarrow S^{2l-1}$ is a suspension of $\eta: S^3\rightarrow S^2$ since our $l$ is even. 
Similarly, by the structure of $\bb{C}P^{l-1}/\bb{C}P^{l-3}$, the differentials $d_2^{2l-4, q}: E_2^{2l-4, q} \longrightarrow E_2^{2l-2, q-1}$ are all induced by the null map, and are therefore all zero.

The Adams spectral sequence for the stable homotopy of $\Sigma\bb{C}P^{\infty}_{l-2}$ has the following form. (See the Appendix for details.)

\begin{center}
\DeclareSseqGroup\tower {} {
    \class(0,0)
    \foreach \y in {1,...,9} {
        \class(0,\y)
\structline
}}

\begin{sseqdata}[name = example, Adams grading, yscale = 0.7, xscale = 1.6, y range = {0}{4}, x range = {4}{10}, no x ticks]
\begin{scope}[ background ]
\node at (5,\ymin - 1) {2l-3};
\node at (6,\ymin - 1) {2l-2};
\node at (7,\ymin - 1) {2l-1};
\node at (8,\ymin - 1) {2l};
\node at (9,\ymin - 1) {2l+1};
\end{scope}
\tower(5, 0)
\class(6, 1)
\class(7, 2)
\tower(7, 0)
\class(8, 1) \class(8, 2) \class(8, 3)
\structline(5, 0)(6, 1)
\structline(6, 1)(7, 2)
\structline(7, 2)(8, 3)
\structline(8, 1)(8, 2)
\structline(8, 2)(8, 3)
\tower(9, 1)

\end{sseqdata}
\printpage[name = example, title = The 2-primary Adams $E_2$-page for $\pi_*^s(\Sigma\bb{C}P^{\infty}_{l-2})$ when $l\equiv 2$ mod 8]
\end{center}

When $l=2$ mod 8, we use similar analysis as previous to conclude the following about the $d_2$ differentials:
\begin{itemize}
	\item $d_2^{2l-2, -(2l-3)}$ is the surjection $\bb{Z}\rightarrow \bb{Z}/2$;
	\item $d_2^{2l-2, -(2l-2)}: \bb{Z}_2 \rightarrow \bb{Z}\oplus \bb{Z}_2$ is the inclusion into the second summand;
	\item $d_2^{2l-2, -(2l-1)}: \bb{Z}\oplus \bb{Z}_2 \rightarrow \bb{Z}/8$ is given by $(0, 4)$;
	\item $d_2^{2l-4, q} = 0$ for all $q$.
\end{itemize}

For example, to determine $d_2^{2l-2, -(2l-1)}: \bb{Z}\oplus \bb{Z}_2 \rightarrow \bb{Z}/8$ in the third case above, we first note that it is a homomorphism
$\pi_{2l-1}^s(\Sigma \bb{C}P^{\infty}_{l-2}) \rightarrow \pi_{2l}^s(\Sigma \bb{C}P^{\infty}_{l-2})$ given by multiplication by $\eta$, where $\eta$ now denotes the stable element. We then examine the Adams chart above to notice that the dot at $(2l-1, 2)$, which yields the $\bb{Z}/2$-summand of the domain, is connected with the dot at $(2l, 3)$ by $h_1$, which represents the element 4 in the target group. This implies that $d_2^{2l-2, -(2l-1)}$ restricted to the $\bb{Z}/2$-summand is multiplication by 4. Similarly, we conclude that $d_2^{2l-2, -(2l-1)}$ restricted to the $\bb{Z}$-summand must be the zero homomorphism, since the dots at $(2l-1, 0)$ and $(2l, 1)$ are not connected by $h_1$.

For degree reasons there cannot be any $d_3$-differential in the AHSS, and the $E_4$-page is partly depicted below.
Here along the diagonal line $p+q = 0$ there is a single group of $E_4^{2l, -2l} = \bb{Z}/4$, and it follows immediately that $\{\bb{C}P^l, \Sigma\bb{C}P^{\infty}_{l-2}\}$ is a quotient of $\bb{Z}/4$, by the image of the possibly nontrivial $d_4$ differential $d_4^{2l-4, -(2l-3)}: E_4^{2l-4, -(2l-3)} \rightarrow E_4^{2l, -2l}$.

\begin{center}
\begin{tikzpicture}
  \matrix (m) [matrix of math nodes, nodes in empty cells, nodes={minimum width=5ex, minimum height=5ex, outer sep=-5pt}, column sep=1ex,row sep=1ex]{
        \quad\strut  & &   \cdots  &  2l-4 & 2l-3  & 2l-2 & 2l-1 & 2l & \geq 2l+1  &  \strut  \\
		   \geq -(2l-4)  & &    \cdots   & 0      & 0     & 0        & 0    & 0      & 0 & & \\
          -(2l-3)    & &   \cdots  & \bb{Z} & 0 & 2\bb{Z}   & 0    & \bb{Z} & 0 &  \\
          -(2l-2)    & &   \cdots  & * & 0 & 0 & 0 & 0 & 0 &  \\
					-(2l-1)    & &   \cdots  & * & 0 & \bb{Z} & 0 & \bb{Z} & 0 & \\
					-2l        & &   \cdots  & * & 0 & * & 0 & \ZZZZZ & 0 &  \\
					};
					\draw[dashed, ->] (m-3-4.south east) -- (m-6-8.north west);
					\draw[thick] (m-1-2.east) -- (m-6-2.east);
					\draw[thick] (m-1-1.south) -- (m-1-10.south);
\end{tikzpicture}
\end{center}

By the construction of the AHSS, the $d_4$ differentials is determined by the structure of the three-cell complex $\bb{C}P^l/\bb{C}P^{l-3}$. 
The differential $d_4^{2l-4, -(2l-3)}: E_4^{2l-4, -(2l-3)}\rightarrow E_4^{2l, -2l}$ is induced by a map 
$\lambda_l: S^{2l} = \bb{C}P^l/\bb{C}P^{l-1} \rightarrow \Sigma \bb{C}P^{l-2}/\bb{C}P^{l-3} = S^{2l-3}$, 
which belongs to the (2-local) third stable stem $\pi_3^s(S^0)\cong \bb{Z}/8$. 
More precisely, consider the following commutative diagram. 
Here both the rows and the middle two columns are part of cofiber sequences, and $\delta_{n-1}$ and $\delta_{n-2}$ are connecting maps in those cofiber sequences.

\begin{center}
\begin{tikzcd}[column sep = small]
 \bb{C}P^{l-1}/\bb{C}P^{l-2} \ar[r]  & \Sigma \bb{C}P^{l-2}/\bb{C}P^{l-3} \ar[r] & \Sigma \bb{C}P^{l-1}/\bb{C}P^{l-3} \ar[r] & \Sigma \bb{C}P^{l-1}/\bb{C}P^{l-2} \\
 \bb{C}P^{l-1}/\bb{C}P^{l-2} \ar[r] \ar[u, "="]  & \bb{C}P^{l}/\bb{C}P^{l-2} \ar[r] \ar[u, "\delta_{l-2}"] & \bb{C}P^{l}/\bb{C}P^{l-1} \ar[r] \ar[u, "\delta_{l-1}"] & \Sigma \bb{C}P^{l-1}/\bb{C}P^{l-2} \ar[u, "="] \\
\end{tikzcd}
\end{center}

When $l$ is even, $\bb{C}P^{l-1}/\bb{C}P^{l-3}$ splits as $S^{2l-2} \vee S^{2l-4}$. Denote by $p$ the quotient map 
$$ S^{2l-1} \vee S^{2l-3} = \Sigma \bb{C}P^{l-1}/\bb{C}P^{l-3} \rightarrow \Sigma \bb{C}P^{l-2}/\bb{C}P^{l-3} = S^{2l-3}, $$
which sections the inclusion $\Sigma \bb{C}P^{l-2}/\bb{C}P^{l-3} \rightarrow \Sigma \bb{C}P^{l-1}/\bb{C}P^{l-3}$. 
In this case $\lambda_l$ is the composite $p \circ \delta_{l-1}$.
When $l$ is odd, $\bb{C}P^{l}/\bb{C}P^{l-2}$ splits as $S^{2l} \vee S^{2l-2}$. Denote by $j$ the standard inclusion
$$S^{2l} = \bb{C}P^{l}/\bb{C}P^{l-1} \rightarrow \bb{C}P^{l}/\bb{C}P^{l-2} = S^{2l} \vee S^{2l-2},$$
which sections the quotient map $\bb{C}P^{l}/\bb{C}P^{l-2} \rightarrow \bb{C}P^{l}/\bb{C}P^{l-1}$. In this case $\lambda_l$ is the composite $\delta_{l-2} \circ j$.

In \cite{Mosher} Mosher determines the homotopy classes of the $\lambda_l$'s, which exhibit an 8-fold periodic behavior which we present as follows.
Write $\nu$ for the Hopf map $S^7\rightarrow S^4$ generating the 2-local third stable stem.

\begin{lemma}[Mosher \cite{Mosher}, Proposition 5.2] \label{AHSSd4}
Let $l\geq 4$ be an integer. The homotopy class of $\lambda_l$ satisfies an 8-fold periodicity as follows.
\begin{center}
\begin{tabular}{c | c c c c c c c c }
\hline
$l \mod 8$ & $0$ & $1$ & $2$ & $3$ & $4$ & $5$ & $6$ & $7$ \\
\hline
homotopy class of $\lambda_l$ & $\nu$ & $\nu$ & $0$ & $2\nu$ & $\nu$ & $\nu$ & $2\nu$ & $0$ \\
\hline
\end{tabular}
\end{center}
\end{lemma}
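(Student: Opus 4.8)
The plan is to compute $\lambda_l$ by detecting it with Adams' $e$-invariant, which I would read off from the complex $K$-theory of the three-cell stunted space $\bb{C}P^l/\bb{C}P^{l-3}$ together with its Adams operations. Since $\lambda_l$ lies in $\pi_3^s$, which $2$-locally is $\bb{Z}/8$ generated by $\nu$ and coincides with the image of $J$, the complex $e$-invariant $e_{\bb{C}}\colon \pi_3^s \to \bb{Q}/\bb{Z}$ is injective; so it suffices to compute $e_{\bb{C}}(\lambda_l)\in\bb{Q}/\bb{Z}$ and compare its ($2$-local) value against those of $\nu$ and $2\nu$, separating the three possibilities $0,\nu,2\nu$ by the order of the class ($1$, $8$, or $4$).

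First I would set up the $K$-theory. Writing $x=[H]-1$ for $H$ the dual tautological line bundle, $\tilde K(\bb{C}P^l/\bb{C}P^{l-3})$ is free of rank three on the images of $x^{l-2},x^{l-1},x^l$, and $\psi^k(x)=(1+x)^k-1$, so $\psi^k$ is upper triangular in this basis with diagonal entries $k^{l-2},k^{l-1},k^l$ and off-diagonal entries that are explicit binomial-coefficient polynomials in $k$ and $l$. The two parity cases separate exactly as in the statement: the bottom $\Sq^2$ (relating $x^{l-2}$ and $x^{l-1}$) is nonzero iff $l$ is odd and the top $\Sq^2$ (relating $x^{l-1}$ and $x^l$) is nonzero iff $l$ is even, which is also visible $2$-adically in the parity of the relevant off-diagonal entry. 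Accordingly, when $l$ is even the bottom two cells split and $\lambda_l$ is the $S^{2l-4}$-component of the top-cell attaching map; when $l$ is odd the top two cells split and one argues symmetrically using $\delta_{l-2}\circ j$.

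With this in hand, $e_{\bb{C}}(\lambda_l)$ is extracted as the appropriate $\psi^k$-entry relating the top cell $x^l$ to the bottom cell $x^{l-2}$, divided by $k^l-k^{l-2}$ and reduced mod $\bb{Z}$; the answer is independent of $k$ (I would verify consistency using $k=3$ and $k=5$), and its $2$-adic part is $8$-fold periodic in $l$ because it is governed by the $2$-adic valuations of binomial coefficients against the denominator $24$ of $B_2/4$. Collating the residues $l \bmod 8$ then yields the table, with the order of the class telling $0$, $\nu$, and $2\nu$ apart.

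The main obstacle is that this is genuinely a \emph{secondary} attaching datum in a three-cell complex, not the $e$-invariant of a two-cell complex: the presence of the middle cell $x^{l-1}$ forces a change of basis (subtracting a suitable multiple of $x^{l-1}$ to kill the middle-cell restriction) before the relevant $\psi^k$-entry actually computes $\lambda_l$, and it is precisely this correction that the naive two-cell formula misses. Closely related is the normalization subtlety in degree $3$, where one must track the factor of two relating $e_{\bb{C}}$ and $e_{\bb{R}}$ in order to separate $\nu$ from $2\nu$ rather than merely deciding whether $\lambda_l$ vanishes. This delicate bookkeeping is what Mosher carries out via Toda's composition (secondary-bracket) methods, expressing $\lambda_l$ through brackets of the form $\langle \eta, -, \eta\rangle$ and evaluating them using relations such as $\eta^3=4\nu$ together with the known indeterminacies; an alternative to the $K$-theory route above is to reproduce that Toda-bracket computation directly, where the same difficulty reappears as the problem of pinning down the exact multiple of $\nu$ within the bracket's indeterminacy.
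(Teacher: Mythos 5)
First, a structural point: the paper does not prove this lemma at all --- it is quoted verbatim from Mosher \cite{Mosher} (his Proposition 5.2), and the paper's contribution is only the surrounding setup, namely the identification of $\lambda_l$ as $p\circ\delta_{l-1}$ ($l$ even) or $\delta_{l-2}\circ j$ ($l$ odd) via the stable splittings of $\bb{C}P^{l-1}/\bb{C}P^{l-3}$ and $\bb{C}P^l/\bb{C}P^{l-2}$. Your parity analysis of the two $\Sq^2$'s and your identification of $\lambda_l$ as the relevant wedge-component of the (suspended) top-cell attaching map agree exactly with that setup, and your $K$-theoretic framework (rank-three $\tilde{K}$, $\psi^k(x)=(1+x)^k-1$, change of basis killing the middle-cell restriction before reading off the off-diagonal entry) is a coherent and classical alternative route. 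So you should be judged against the statement itself, not against an in-paper argument.

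Measured that way, there is one genuine gap, and it sits exactly at the step you assert most confidently: $e_{\bb{C}}\colon \pi_3^s\to\bb{Q}/\bb{Z}$ is \emph{not} injective. Dimension $3$ is congruent to $3$ mod $8$, so by Adams' $J(X)$ results $e_{\bb{C}}=2e_{\bb{R}}$ there; concretely $e_{\bb{R}}(\nu)=\pm 1/24$ but $e_{\bb{C}}(\nu)=\pm 1/12$, so the image of $e_{\bb{C}}$ on $\pi_3^s\cong\bb{Z}/24$ has order $12$ and its kernel is generated by $12\nu$, which $2$-locally is $4\nu$. Hence everything you can extract from $\psi^k$ on complex $K$-theory --- no matter how carefully you handle the middle-cell correction or cross-check $k=3$ against $k=5$ --- determines $\lambda_l$ only modulo $4\nu$ in $\bb{Z}/8\cdot\nu$. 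In particular it cannot certify the entries $\lambda_l=0$ at $l\equiv 2,7 \bmod 8$ against the alternative $\lambda_l=4\nu$, nor $\nu$ against $5\nu$, etc.; your plan to ``separate the three possibilities $0,\nu,2\nu$ by the order of the class'' presupposes that the answer already lies in $\{0,\nu,2\nu\}$, which is not known a priori. Closing this order-$2$ ambiguity requires the real $e$-invariant, i.e., $KO^*(\bb{C}P^l/\bb{C}P^{l-3})$ with its real Adams operations --- a substantially heavier computation whose structure genuinely depends on $l\bmod 8$ --- and your proposal never sets this up; the passing remark about ``the factor of two relating $e_{\bb{C}}$ and $e_{\bb{R}}$'' both mislocates where it bites (it does not obstruct separating $\nu$ from $2\nu$, whose $e_{\bb{C}}$-orders differ; it obstructs seeing the order-$2$ element $4\nu$ at all) and is not accompanied by any actual $KO$ argument. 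Your fallback --- redoing Mosher's Toda-bracket computation with $\langle\eta,-,\eta\rangle$ and $\eta^3=4\nu$ ($2$-locally) --- correctly names the method of the cited source, but as written it defers precisely the hard point (pinning the multiple of $\nu$ within the bracket's indeterminacy) rather than resolving it, so it amounts to an appeal to \cite{Mosher} rather than an independent proof.
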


Proposition \ref{2local} follows immediately from Lemma \ref{AHSSd4}. When $l=2$ mod $8$, Lemma \ref{AHSSd4} suggests that the differential $d_4^{2l-4, -(2l-3)}$ we study is induced by the null map. It follows that the cokernel of $d_4^{2l-4, -(2l-3)}$ is $\bb{Z}/4$. 
For degree reasons there can be no further nontrivial differentials starting from or arriving at the diagonal $p+q = 0$. 
Thus we conclude that $\{\bb{C}P^l, \Sigma\bb{C}P^{\infty}_{l-2} \} \cong \bb{Z}/4$ when for $l=2$ mod 8.

In all other cases, determining $d_4^{2l-4, -(2l-3)}$ comes down to learning the homotopy class of $\lambda_l$, which by Lemma \ref{AHSSd4} is a multiple of $\nu$. 
Applying analysis of $\bb{C}P^l/\bb{C}P^{l-3}$ as above and using Mosher's Lemma gives Theorem \ref{2local}.

\subsubsection{Calculations at the prime 3}
In this subsection we prove Theorem \ref{3local}, and we work 3-locally throughout. At the prime 3, there is one possibly nonzero $d_4$-differential in the AHSS to be determined. This differential reflects the structure of $\bb{C}P^l/\bb{C}P^{l-3}$, and in some cases the bottom cell and the top cell in this stunted projective space are related by the 3-primary Steenrod operation $\mathcal{P}^1$, which detects the generator of the third stable stem at the prime 3. 

Our strategy for the 3-local calculation is exactly the same as in the  2-local case. 
First  we calculate some 3-local stable homotopy groups of $\Sigma\bb{C}P^{\infty}_{l-2}$. 
In the Appendix we shall prove the following 3-primary analogue of Lemma \ref{sthomotopygeneral}.

\begin{lemma} \label{sthomotopygeneral3}
The 3-primary stable homotopy groups $\pi_i^s(\Sigma\bb{C}P^{\infty}_n)$ for $i \leq 2n+4$ are as follows.
\begin{enumerate}
	\item $\pi_i^s(\Sigma\bb{C}P^{\infty}_n) = 0$ for $i\leq 2n$.
	\item $\pi_{2n+1}^s(\Sigma\bb{C}P^{\infty}_n) = \bb{Z}$, $\pi_{2n+2}^s(\Sigma\bb{C}P^{\infty}_n) = 0$, and $\pi_{2n+3}^s(\Sigma\bb{C}P^{\infty}_n) = \bb{Z}$.
	\item $\pi_{2n+4}^s(\Sigma\bb{C}P^{\infty}_n) = 0$ if $n = 1, 2 \mod 3$, and $\pi_{2n+4}^s(\Sigma\bb{C}P^{\infty}_n) = \bb{Z}/3$ if $n = 0 \mod 3$.
\end{enumerate}
\end{lemma}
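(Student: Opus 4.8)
The plan is to run the $3$-primary Adams spectral sequence exactly as in the $2$-primary case (Lemma~\ref{sthomotopygeneral}), exploiting the fact that in the range $i \leq 2n+4$ the answer depends only on the finite skeleton $\Sigma\bb{C}P^{n+2}_n$, a three-cell complex with cells in dimensions $2n+1$, $2n+3$, $2n+5$. First I would record the $\cal{A}$-module structure of $H^*(\Sigma\bb{C}P^{\infty}_n; \bb{Z}/3)$: the classes are the suspensions $y_{2n+2k+1} = \sigma(x^{n+k})$, the Bockstein $\beta$ vanishes since the cohomology is concentrated in odd degrees, and the only operation acting nontrivially in this range is $\cal{P}^1$, which raises degree by $4 = 2(p-1)$. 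The Cartan formula gives $\cal{P}^1(x^m) = \binom{m}{1} x^{m+2} = m\, x^{m+2}$, so $\cal{P}^1(y_{2n+2k+1}) = (n+k)\, y_{2n+2k+5}$ with the coefficient read mod $3$; in particular the bottom and top cells of $\Sigma\bb{C}P^{n+2}_n$ are joined by $\cal{P}^1$ precisely when $n \not\equiv 0 \pmod 3$.

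The low-degree part of the lemma is then immediate. Part (1) is connectivity, since the bottom cell sits in dimension $2n+1$. The bottom cell $y_{2n+1}$ contributes a free $\bb{Z}$ (the $a_0$-tower in $\ext_{\cal{A}}$) in stem $2n+1$, and because the $3$-local stable stems $\pi_1^s$ and $\pi_2^s$ vanish, nothing else appears in stems $2n+1$ and $2n+2$. The next cell $y_{2n+3}$ likewise contributes a free $\bb{Z}$ in stem $2n+3$, and the only other potential contribution there is $\pi_2^s$ on the bottom cell, which is zero; this yields $\pi_{2n+3}^s = \bb{Z}$ and completes part (2). No differentials or extension problems can interfere in this range for degree reasons.

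The crux is stem $2n+4$, which parallels the $d_4$-analysis of the $2$-local case. The only class that can live there $3$-locally is the product of $\alpha_1 \in \pi_3^s \cong \bb{Z}/3$ with the bottom cell $y_{2n+1}$ (equivalently, the Atiyah-Hirzebruch entry $E_{2n+1,3} = \bb{Z}/3$). Its fate is governed by the relation $\cal{P}^1 y_{2n+1} = n\, y_{2n+5}$: since $\cal{P}^1$ is precisely the operation detecting $\alpha_1$, the class survives when $n \equiv 0 \pmod 3$ and is killed otherwise. I would argue this inside the Adams spectral sequence, where a nonzero $\cal{P}^1$ alters the minimal resolution of the three-cell module so that the would-be $\alpha_1$-class on the bottom cell is hit and hence absent from $E_2$, while a vanishing $\cal{P}^1$ leaves a surviving $\bb{Z}/3$ in filtration one. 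Alternatively one can phrase it as the classical first $3$-primary Atiyah-Hirzebruch differential $d_4 : E_{2n+5,0} = \bb{Z} \to E_{2n+1,3} = \bb{Z}/3$, which sends the generator to $n \bmod 3$. Either way the dichotomy of part (3) follows.

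The main obstacle I anticipate is bookkeeping the coefficient $n \bmod 3$ and confirming that the $\cal{P}^1$-action, rather than any higher operation or hidden extension, is what controls stem $2n+4$: one must verify that the three-cell truncation $\Sigma\bb{C}P^{n+2}_n$ is faithful in this range and that $\cal{P}^1$ genuinely detects $\alpha_1$ on the relevant class. This is the direct $3$-primary analogue of identifying the $\Sq^2$-governed $\eta$-differentials and the $\cal{P}^1$/Mosher input used in the $2$-local argument, so the same structural techniques should carry it through.
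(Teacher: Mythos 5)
Your proposal is correct and follows essentially the same route as the paper's appendix proof: record the $\cal{A}$-module structure via $\cal{P}^1(x^m)=m\,x^{m+2}$ (with $\beta=0$ for degree reasons), observe the $3$-fold periodicity, and run the $3$-primary Adams spectral sequence, where the presence or absence of the $\alpha_1$-class on the bottom cell in stem $2n+4$ is governed by whether $\cal{P}^1 y_{2n+1}=n\,y_{2n+5}$ vanishes, with no room for differentials in the range $i\leq 2n+4$. Your only deviations are harmless: you truncate to the three-cell skeleton $\Sigma\bb{C}P^{n+2}_n$ (sufficient in this range, whereas the paper uses $\bb{C}P^{n+4}_n$ uniformly for both primes), and you make explicit the equivalent Atiyah--Hirzebruch formulation $d_4\colon \bb{Z}\to\bb{Z}/3$ as multiplication by $n \bmod 3$, which the paper itself invokes in its $3$-local AHSS argument in Section 4.
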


We can now compute $\{\bb{C}P^l, \Sigma\bb{C}P^{\infty}_{l-2}\}$ via the AHSS which has
$$E_2^{p, q} = H^q\big{(}\bb{C}P^l; \pi_{-q}^s(\Sigma\bb{C}P^{\infty}_{l-2})\big{)} \Longrightarrow \{\bb{C}P^l, \Sigma^{p+q+1} \bb{C}P^{\infty}_{l-2}\}.$$

When $l = 0,1 \mod 3$, the $E_2$-page of the spectral sequence includes 

\begin{center}
\begin{tikzpicture}
  \matrix (m) [matrix of math nodes, nodes in empty cells, nodes={minimum width=5ex, minimum height=5ex, outer sep=-5pt}, column sep=1ex,row sep=1ex]{
        \quad\strut     &    &   \cdots  &  2l-4 & 2l-3  & 2l-2 & 2l-1 & 2l & \geq 2l+1  &  \strut  \\
				 \geq -(2l-4)   &    &  \cdots   &  0    & 0     & 0    & 0    & 0  & 0 & \\
          -(2l-3)       &    &    \cdots  & \bb{Z} & 0 & \bb{Z}   & 0    & \bb{Z} & 0 &  \\
          -(2l-2)       &    &    \cdots  & 0 & 0 & 0 & 0 & 0 & 0 & \\
					-(2l-1)       &    &   \cdots & \bb{Z} & 0 & \bb{Z} & 0 & \bb{Z} & 0 &  \\
					-2l           &    &  \cdots & 0 & 0 & 0 & 0 & 0 & 0 & \\
					};
					\draw[thick] (m-1-2.east) -- (m-6-2.east);
					\draw[thick] (m-1-1.south) -- (m-1-10.south);
\end{tikzpicture}.
\end{center}

Note that $E_2^{p, q}$ vanish for all $p>2l$ and for all $q>-(2l-3)$. In particular, all groups are zero along the diagonal $p+q = 0$. Thus $\{\bb{C}P^l, \Sigma\bb{C}P^{\infty}_{l-2}\} = 0$. 

When $l = 2 \mod 3$, the $E_2$-page of the spectral sequence includes

\begin{center}
\begin{tikzpicture}
  \matrix (m) [matrix of math nodes, nodes in empty cells, nodes={minimum width=5ex, minimum height=5ex, outer sep=-5pt}, column sep=1ex,row sep=1ex]{
        \quad\strut     &    &  \cdots  &  2l-4 & 2l-3  & 2l-2 & 2l-1 & 2l   & \geq 2l+1 & \strut  \\
				 \geq -(2l-4)   &    &  \cdots  &  0    & 0     & 0    & 0    & 0  & 0 & \\
          -(2l-3)       &    &  \cdots  & \bb{Z} & 0 & \bb{Z}   & 0    & \bb{Z} & 0 & \\
          -(2l-2)       &    &  \cdots  & 0 & 0 & 0 & 0 & 0 & 0 & \\
					-(2l-1)       &    &  \cdots & \bb{Z} & 0 & \bb{Z} & 0 & \bb{Z} & 0 & \\
					-2l           &    &  \cdots & \bb{Z}/3 & 0 & \bb{Z}/3 & 0 & \W & 0 & \\
					};
					\draw[dashed, ->] (m-5-6.south east) -- (m-6-8.north west);
					\draw[dashed, ->] (m-3-4.south east) -- (m-6-8.north west);
					\draw[thick] (m-1-2.east) -- (m-6-2.east);
					\draw[thick] (m-1-1.south) -- (m-1-10.south);
\end{tikzpicture}.
\end{center}

In this case the only nontrivial group along the diagonal $p+q = 0$ is the circled $E_2^{2l, -2l} \cong \bb{Z}/3$, and the only possible $d_2$-differential hitting this group is $d_2^{2l-2, -(2l-1)}: E_2^{2l-2, -(2l-1)}\rightarrow E_2^{2l, -2l}$. This differential is either induced by the null map or some suspension of $\eta: S^3\rightarrow S^2$, but since $\eta$ is 3-locally null the differential must vanish. So this copy of $\bb{Z}/3$ survives to the $E_4$-page.

There is a possibly nonzero differential $d_4^{2l-4, -(2l-3)}$ hitting $E_4^{2l, -2l} \cong \bb{Z}/3$. By the construction of the spectral sequence, this homomorphism $\bb{Z}\rightarrow \bb{Z}/3$ is induced by a map $S^{2l} = \bb{C}P^l/\bb{C}P^{l-1} \rightarrow \Sigma \bb{C}P^{l-2}/\bb{C}P^{l-3} = S^{2l-3}$, which belongs to the (3-local) third stable stem $\pi_3^s(S^0)\cong \bb{Z}/3$. 
We claim that this map must be null. Indeed, if it was essential then it must be detected by the 3-primary Steenrod operation $\mathcal{P}^1$, but $\mathcal{P}^1(x^{l-2}) = 0$ when $l = 2 \mod 3$.
So we conclude that the differential $d_4^{2l-4, -(2l-3)}$ is zero when $l=2 \mod 3$, and hence that $\{\bb{C}P^l, \Sigma\bb{C}P^{\infty}_{l-2}\} = \bb{Z}/3$ in this case.

We have proved that, 3-locally, $\{\bb{C}P^l, \Sigma\bb{C}P^{\infty}_{l-2}\}$ is zero if $l = 0, 1\mod 3$, and is isomorphic to $\bb{Z}/3$ when $l = 2 \mod 3$. Theorem \ref{3local} then follows.

\vspace{2mm}

\appendix

\section{The First Few Stable Homotopy Groups of \texorpdfstring{$\Sigma\bb{C}P^{\infty}_n$}{CPinfinityn}}

We prove Lemma \ref{sthomotopygeneral} and Lemma \ref{sthomotopygeneral3}, which compute, respectively, some first 2-local and 3-local stable homotopy groups of $\Sigma\bb{C}P^{\infty}_n$.

First note that $\Sigma\bb{C}P^{\infty}_n$ is $2n$-connected, and hence $\pi_i^s(\Sigma\bb{C}P^{\infty}_n) = 0$ for $i\leq 2n$. 
Secondly, $\pi_i^s(\Sigma\bb{C}P^{\infty}_n) = 0$ for $2n+1\leq i \leq 2n+4$ is controlled by the structure of the stunted projective space $\bb{C}P_n^{n+4} = \bb{C}P^{n+4}/\bb{C}P^{n-1}$.

\subsection{Proof of Lemma \ref{sthomotopygeneral}}
We start with the 2-primary calculations. The action of the mod 2 Steenrod algebra on the mod 2 cohomology of $\bb{C}P_n^{n+4}$ exhibits the following 8-fold periodic behavior. We present these actions in terms of diagrams as follows. 

\begin{center}
\begin{minipage}{0.2\textwidth}
  $n = 0 \mod 8$
\end{minipage}
\hspace{5mm}
\begin{minipage}{0.4\textwidth}
  \begin{tikzpicture}
			\draw [thin] (5,0) node[below]{{\footnotesize{$y_{2n+1}$}}}; \draw [thin] (6,0) node[below]{{\footnotesize{$y_{2n+3}$}}} -- (7,0) node[below]{{\footnotesize{$y_{2n+5}$}}}; \draw (8, 0) node[below]{{\footnotesize{$y_{2n+7}$}}} -- (9, 0) node[below]{{\footnotesize{$y_{2n+9}$}}}; 
		\draw [dashed] (9,0) arc (45:135:1.41cm); 
	\end{tikzpicture}
\end{minipage}
\end{center}

\begin{center}
\begin{minipage}{0.2\textwidth}
  $n = 1 \mod 8$
\end{minipage}
\hspace{5mm}
\begin{minipage}{0.4\textwidth}
  \begin{tikzpicture}		
			\draw (1,0) node[below]{{\footnotesize{$y_{2n+1}$}}} -- (2,0) node[below]{{\footnotesize{$y_{2n+3}$}}}; \draw [thin] (3,0) node[below]{{\footnotesize{$y_{2n+5}$}}} -- (4,0) node[below]{{\footnotesize{$y_{2n+7}$}}}; \draw [thin] (5,0) node[below]{{\footnotesize{$y_{2n+9}$}}};
	  \draw (4,0) arc (45:135:1.41cm) ; \draw (5,0) arc (-45:-135:1.41cm); 
	\end{tikzpicture}
\end{minipage}
\end{center}

\begin{center}
\begin{minipage}{0.2\textwidth}
  $n = 2 \mod 8$
\end{minipage}
\hspace{5mm}
\begin{minipage}{0.4\textwidth}
  \begin{tikzpicture}
	 \draw (0, 0) node[below]{{\footnotesize{$y_{2n+1}$}}};
			\draw (1,0) node[below]{{\footnotesize{$y_{2n+3}$}}} -- (2,0) node[below]{{\footnotesize{$y_{2n+5}$}}}; \draw [thin] (3,0) node[below]{{\footnotesize{$y_{2n+7}$}}} -- (4,0) node[below]{{\footnotesize{$y_{2n+9}$}}};
	  \draw (2,0) arc (45:135:1.41cm); \draw (3,0) arc (-45:-135:1.41cm); 
	\end{tikzpicture}
\end{minipage}
\end{center}

\begin{center}
\begin{minipage}{0.2\textwidth}
  $n = 3 \mod 8$
\end{minipage}
\hspace{5mm}
\begin{minipage}{0.4\textwidth}
  \begin{tikzpicture}
			\draw (1,0) node[below]{{\footnotesize{$y_{2n+1}$}}} -- (2,0) node[below]{{\footnotesize{$y_{2n+3}$}}}; \draw [thin] (3,0) node[below]{{\footnotesize{$y_{2n+5}$}}} -- (4,0) node[below]{{\footnotesize{$y_{2n+7}$}}}; \draw (5,0) node[below]{{\footnotesize{$y_{2n+9}$}}};
	  \draw (3,0) arc (-45:-135:1.41cm); 
	\end{tikzpicture}
\end{minipage}
\end{center}

\begin{center}
\begin{minipage}{0.2\textwidth}
  $n = 4 \mod 8$
\end{minipage}
\hspace{5mm}
\begin{minipage}{0.4\textwidth}
  \begin{tikzpicture}
			\draw (2,0) node[below]{{\footnotesize{$y_{2n+1}$}}}; \draw [thin] (3,0) node[below]{{\footnotesize{$y_{2n+3}$}}} -- (4,0) node[below]{{\footnotesize{$y_{2n+5}$}}}; \draw [thin] (5,0) node[below]{{\footnotesize{$y_{2n+7}$}}} -- (6,0) node[below]{{\footnotesize{$y_{2n+9}$}}};
    \draw (6,0) arc (45:135:1.41cm); 
		\draw [dashed] (6,0) arc (45:135:2.83cm);
	\end{tikzpicture}
\end{minipage}
\end{center}

\begin{center}
\begin{minipage}{0.2\textwidth}
  $n = 5 \mod 8$
\end{minipage}
\hspace{5mm}
\begin{minipage}{0.4\textwidth}
  \begin{tikzpicture}
			\draw [thin] (3,0) node[below]{{\footnotesize{$y_{2n+1}$}}} -- (4,0) node[below]{{\footnotesize{$y_{2n+3}$}}}; \draw [thin] (5,0) node[below]{{\footnotesize{$y_{2n+5}$}}} -- (6,0) node[below]{{\footnotesize{$y_{2n+7}$}}};
   \draw (6,0) arc (45:135:1.41cm); 
		\draw (7, 0) node[below]{{\footnotesize{$y_{2n+9}$}}}; \draw (7,0) arc (-45:-135:1.41cm); \draw [dashed] (7,0) arc (-45:-135:2.83cm); 
	\end{tikzpicture}
\end{minipage}
\end{center}

\begin{center}
\begin{minipage}{0.2\textwidth}
  $n = 6 \mod 8$
\end{minipage}
\hspace{5mm}
\begin{minipage}{0.4\textwidth}
  \begin{tikzpicture}
			\draw [thin] (5,0) node[below]{{\footnotesize{$y_{2n+1}$}}}; \draw [thin] (6,0) node[below]{{\footnotesize{$y_{2n+3}$}}} -- (7,0) node[below]{{\footnotesize{$y_{2n+5}$}}}; \draw (8, 0) node[below]{{\footnotesize{$y_{2n+7}$}}} -- (9, 0) node[below]{{\footnotesize{$y_{2n+9}$}}}; 
   \draw (7,0) arc (45:135:1.41cm); \draw (8,0) arc (-45:-135:1.41cm); 
		\draw [dashed] (9,0) arc (45:135:2.83cm);
	\end{tikzpicture}
\end{minipage}
\end{center}

\begin{center}
\begin{minipage}{0.2\textwidth}
  $n = 7 \mod 8$
\end{minipage}
\hspace{5mm}
\begin{minipage}{0.4\textwidth}
  \begin{tikzpicture}
			\draw [thin] (5,0) node[below]{{\footnotesize{$y_{2n+1}$}}} -- (6,0) node[below]{{\footnotesize{$y_{2n+3}$}}}; \draw (7,0) node[below]{{\footnotesize{$y_{2n+5}$}}} --  (8, 0) node[below]{{\footnotesize{$y_{2n+7}$}}}; \draw (9, 0) node[below]{{\footnotesize{$y_{2n+9}$}}}; 
   \draw (7,0) arc (-45:-135:1.41cm); 
		\draw [dashed] (9,0) arc (-45:-135:2.83cm); 
	\end{tikzpicture}
\end{minipage}
\end{center}

We have constructed  explicit minimal $\cal{A}$-resolution s
$$\cdots \rightarrow P_s \stackrel{\partial_s}{\longrightarrow} \cdots P_2 \stackrel{\partial_2}{\longrightarrow} P_1 \stackrel{\partial_1}{\longrightarrow} P_0 \stackrel{\epsilon}{\longrightarrow} H^*(\bb{C}P_n^{n+4}) \rightarrow 0$$
in each case to compute  Adams $E_2$ pages. 
We present details in only one example, namely the case $n=2$, with other cases being similar.  In all cases,
our hand calculations were kindly verified by Robert Bruner using his computer code \cite{Bruner}.

Focusing on the case of $\Sigma\bb{C}P^{\infty}_2$, which is needed to prove Lemma \ref{sthomotopy}, 
we recall that $H^*(\Sigma\bb{C}P^{\infty}_2)$ has the following behavior.

\begin{center}
  \begin{tikzpicture}
	 \draw (0, 0) node[below]{{\footnotesize{$y_5$}}};
			\draw (1,0) node[below]{{\footnotesize{$y_7$}}} -- (2,0) node[below]{{\footnotesize{$y_9$}}}; \draw [thin] (3,0) node[below]{{\footnotesize{$y_{11}$}}} -- (4,0) node[below]{{\footnotesize{$y_{13}$}}}; \draw [thin] (5,0) node[below]{{\footnotesize{$y_{15}$}}} -- (6,0) node[below]{{\footnotesize{$y_{17}$}}};
	  \draw (2,0) arc (45:135:1.41cm); \draw (3,0) arc (-45:-135:1.41cm); \draw (6,0) arc (45:135:1.41cm);
		\draw [dashed] (6,0) arc (45:135:2.83cm);
	\end{tikzpicture}
\end{center}

\underline{Filtration $s=0$}. To define $P_0$ which surjects onto $H^*(\Sigma\bb{C}P^{\infty}_2)$,
we introduce a free generator $e_{0, 5}$ in degree 5 to kill $y_5$, and a free generator $e_{0, 7}$ in degree 7 to kill 
$y_7$. That is, 
$$P_0:= \cal{A}e_{0, 5} \oplus \cal{A}e_{0, 7}\oplus \cdots, \quad \epsilon(e_{0, 5}) = y_5, \epsilon(e_{0, 7}) = y_7, \cdots$$
The next free generator to introduce would be $e_{0, 15}$ in degree 15 so that $\epsilon(e_{0, 15}) = y_{15}$,
so $P_0$ has no summands generated in degree $t$ for $7<t<15$.

A basis for $\Ker(\epsilon)$ in degrees $t\leq 13$ is presented below.

\vspace{1mm}

\begin{center}
\begin{tabular}{l|l l l l l l}
{\rm Deg} 6 & \quad $\Sq^1 e_{0, 5}$ & & & & & \\
{\rm Deg} 7 & \quad $\Sq^2 e_{0, 5}$ & & & & &  \\
{\rm Deg} 8 & \quad $\Sq^3 e_{0, 5}$ &  $\Sq^2\Sq^1 e_{0, 5}$ & \quad $\Sq^1 e_{0, 7}$ & & & \\
{\rm Deg} 9 &  &  $\Sq^3\Sq^1 e_{0, 5}$ & \quad $\Sq^4 e_{0, 5} + \Sq^2 e_{0, 7}$ & & & \\
{\rm Deg} 10 & \quad $\Sq^5 e_{0, 5}$ &  $\Sq^4\Sq^1 e_{0, 5}$ & \quad $\Sq^3 e_{0, 7}$ &  \quad $\Sq^2\Sq^1 e_{0, 7}$ & & \\
{\rm Deg} 11 & \quad $\Sq^6 e_{0, 5}$ & $\Sq^5\Sq^1 e_{0, 5}$ & \quad $\Sq^4\Sq^2 e_{0, 5}$ & \quad $\Sq^3\Sq^1 e_{0, 7}$ & & \\
{\rm Deg} 12 & \quad $\Sq^7 e_{0, 5}$ & $\Sq^6\Sq^1 e_{0, 5}$ & \quad $\Sq^5\Sq^2 e_{0, 5}$ & \quad $\Sq^4\Sq^2\Sq^1 e_{0, 5}$ & $\Sq^5 e_{0, 7}$ & $\Sq^4\Sq^1 e_{0, 7}$ \\
{\rm Deg} 13 & \quad $\Sq^8 e_{0, 5}$ & $\Sq^7\Sq^1 e_{0, 5}$ & \quad $\Sq^6\Sq^2 e_{0, 5}$ & \quad $\Sq^4\Sq^2\Sq^1 e_{0, 5}$ & $\Sq^5\Sq^1 e_{0, 7}$ & $\Sq^4\Sq^2 e_{0, 7}$ \\
\end{tabular}
\end{center}

\vspace{1mm}

\underline{Filtration $s=1$}. With the kernel of the surjection $\epsilon$ 
from $P_0$ to the cohomology of $\Sigma\bb{C}P^{\infty}_2$
in hand, we construct $P_1$ together with a surjection onto $\Ker(\epsilon)$. We define
$$P_1:= \cal{A}e_{1, 6} \oplus \cal{A}e_{1, 7}\oplus \cal{A}e_{1, 8} \oplus \cal{A}e_{1, 9} \oplus \cdots,$$
$$\partial_1 e_{1, 6} = \Sq^1 e_{0, 5}, \partial_1 e_{1, 7} = \Sq^2 e_{0, 5}, \partial_1 e_{1, 8} = \Sq^1 e_{0, 7}, \partial_1 e_{1, 9} = \Sq^4 e_{0, 5} + \Sq^2 e_{0, 7}, \cdots.$$

Note that the first element of $\Ker(\epsilon)$ that is not yet in $\partial_1(\cal{A}e_{1, 6} \oplus \cal{A}e_{1, 7}\oplus \cal{A}e_{1, 8} \oplus \cal{A}e_{1, 9})$ is $\Sq^8 e_{0, 5}$ in degree 13. So the next free generator to introduce to $P_1$ is $e_{1, 13}$ so that $\partial_1 e_{1, 13} = \Sq^8 e_{0, 5}$. 
In particular, we see that $P_1$ has no components of degree $t$ for $9<t<13$.
Furthermore, we note that $e_{1, 6}$ is connected with $e_{0, 5}$ by $h_0$, $e_{1, 7}$ is connected with $e_{0, 5}$ by $h_1$, $e_{1, 7}$ is connected with $e_{0, 7}$ by $h_0$, $e_{1, 9}$ is connected with $e_{0, 5}$ by $h_2$ and with $e_{0, 7}$ by $h_1$.

A basis for $\Ker(\partial_1)$ in degrees $t\leq 13$ is presented below.

\vspace{1mm}

\begin{center}
\begin{tabular}{l|l l l l l l}
{\rm Deg} 7 & \quad $\Sq^1 e_{1, 6}$ & & & & & \\
{\rm Deg} 8 & \quad   & & & & &  \\
{\rm Deg} 9 & \quad $\Sq^2\Sq^1 e_{1, 6}$ & $\Sq^3 e_{1, 6} + \Sq^2 e_{1, 7}$ & $\Sq^1 e_{1, 8}$ & & & \\
{\rm Deg} 10 & \quad $\Sq^3\Sq^1 e_{1, 6}$ & $\Sq^3 e_{1, 7}$ &  & & & \\
{\rm Deg} 11 & \quad $\Sq^4\Sq^1 e_{1, 6}$ & $\Sq^5 e_{1, 6} + \Sq^3\Sq^1 e_{1, 7}$ &  $\Sq^2\Sq^1 e_{1, 8}$ & & \\
{\rm Deg} 12 & \quad $\Sq^5\Sq^1 e_{1, 6}$ & $\Sq^5 e_{1, 7} + \Sq^4\Sq^1 e_{1, 7}$ & $\Sq^3\Sq^1 e_{1, 8}$ & & \\
{\rm Deg} 13 & \quad $\Sq^6\Sq^1 e_{1, 6}$ & $\Sq^5\Sq^2 e_{1, 6} + \Sq^4\Sq^2 e_{1, 7}$ & $\Sq^4\Sq^1 e_{1, 8}$ & $\Sq^4\Sq^2\Sq^1 e_{1, 6}$ & $\Sq^5\Sq^1 e_{1, 7}$ &   \\
 &  & $\Sq^7e_{1, 6}+ \Sq^5 e_{1, 8} + \Sq^3\Sq^1 e_{1, 9}$  & & & &
\end{tabular}
\end{center}

\vspace{1mm}

\underline{Filtration $s=2$}. We construct $P_2$ together with a surjection onto $\Ker(\partial_1)$. We define
$$P_2:= \cal{A}e_{2, 7} \oplus \cal{A}e_{2, 9}\oplus \cal{A}e_{2, 9}' \oplus \cdots,$$
$$\partial_2 e_{2, 7} = \Sq^1 e_{1, 6}, \partial_2 e_{2, 9} = \Sq^1 e_{1, 8}, \partial_2 e_{2, 9}' = \Sq^3 e_{1, 6} + \Sq^2 e_{1, 7}, \cdots.$$

Note that the first element of $\Ker(\epsilon)$ that is not yet in $\partial_2(\cal{A}e_{2, 7} \oplus \cal{A}e_{2, 9}\oplus \cal{A}e_{2, 9}')$ is $\Sq^7e_{1, 6}+ \Sq^5 e_{1, 8} + \Sq^3\Sq^1 e_{1, 9}$ in degree 13. So the next free generator to introduce to $P_2$ is $e_{2, 13}$ so that $\partial_2 e_{2, 13} = \Sq^7e_{1, 6}+ \Sq^5 e_{1, 8} + \Sq^3\Sq^1 e_{1, 9}$. In particular, we see that $P_2$ has no summands of degree $t$ for $9<t<13$. 
Furthermore,  $e_{2, 7}$ is connected with $e_{1, 6}$ by $h_0$, $e_{2, 9}$ is connected with $e_{1, 8}$ by $h_0$, and $e_{2, 9}'$ is connected with $e_{1, 7}$ by $h_1$.

A basis for $\Ker(\partial_2)$ in degrees $t\leq 13$ is presented below.

\vspace{1mm}

\begin{center}
\begin{tabular}{l| l l l}
{\rm Deg} 8 & \quad  $\Sq^1 e_{2, 7}$ & &  \\
{\rm Deg} 9 & \quad   & &   \\
{\rm Deg} 10 & \quad $\Sq^2\Sq^1 e_{2, 7}$ & \quad $\Sq^1 e_{2, 9}$ &  \\
{\rm Deg} 11 & \quad $\Sq^3\Sq^1 e_{2, 7}$ &  & \\
{\rm Deg} 12 & \quad $\Sq^4\Sq^1 e_{2, 7}$ & \quad $\Sq^2\Sq^1 e_{2, 9}$ & \quad $\Sq^5 e_{2, 7} + \Sq^3 e_{2, 9}$ \\
{\rm Deg} 13 & \quad $\Sq^5\Sq^1 e_{2, 7}$ & \quad $\Sq^3\Sq^1 e_{2, 9}$ & 
\end{tabular}
\end{center}

\vspace{1mm}

\underline{Filtration $s=3$}. We construct $P_3$ together with a surjection onto $\Ker(\partial_2)$. We define
$$P_3:= \cal{A}e_{3, 8} \oplus \cal{A}e_{3, 10}\oplus \cal{A}e_{3, 12} \oplus \cdots,$$
$$\partial_3 e_{3, 8} = \Sq^1 e_{2, 7}, \partial_3 e_{3, 10} = \Sq^1 e_{2, 9}, \partial_3 e_{3, 12} = \Sq^5 e_{2, 7} + \Sq^3 e_{2, 9}, \cdots.$$

Every element of $\Ker(\partial_2)$ in the range given above is contained in $\partial_3(\cal{A}e_{3, 8} \oplus \cal{A}e_{3, 10}\oplus \cal{A}e_{3, 12})$. 
Furthermore,  $e_{3, 8}$ is connected with $e_{2, 7}$ by $h_0$, and $e_{3, 10}$ is connected with $e_{2, 9}$ by $h_0$.

A basis for $\Ker(\partial_3)$ in degrees $t\leq 13$ is presented below.

\vspace{1mm}

\begin{center}
\begin{tabular}{l| l l}
{\rm Deg} 9 & \quad $\Sq^1 e_{3, 8}$ & \\
 {\rm Deg} 10 & \quad  & \\
{\rm Deg} 11 & \quad $\Sq^2\Sq^1 e_{3, 8}$ & \quad $\Sq^1 e_{3, 10}$ \\
{\rm Deg} 12 & \quad $\Sq^3\Sq^1 e_{3, 8}$ &  \\
{\rm Deg} 13 & \quad $\Sq^4\Sq^1 e_{3, 8}$ & \quad $\Sq^2\Sq^1 e_{3, 10}$
\end{tabular}
\end{center}

\vspace{1mm}

\underline{Filtration $s=4$}. We construct $P_4$ together with a surjection onto $\Ker(\partial_3)$. We define
$$P_3:= \cal{A}e_{4, 9} \oplus \cal{A}e_{4, 11}\oplus \cdots,$$
$$\partial_4 e_{4, 9} = \Sq^1 e_{3, 8}, \partial_4 e_{4, 11} = \Sq^1 e_{3, 10}, \cdots.$$

Every element of $\Ker(\partial_3)$ in the above range is contained in $\partial_4(\cal{A}e_{4, 9} \oplus \cal{A}e_{4, 11})$.
Furthermore,  $e_{4, 9}$ is connected with $e_{3, 8}$ by $h_0$, and $e_{4, 11}$ is connected with $e_{3, 10}$ by $h_0$.

\underline{Filtration $s>4$}.
Inductively, suppose that the basis for $\Ker(\partial_s)$ ($s\geq 4$) in degrees $t\leq s+8$ can be taken as follows.

\vspace{1mm}

\begin{center}
\begin{tabular}{l| l l}
{\rm Deg} \; s+6 & \quad $\Sq^1 e_{s, s+5}$ & \\
{\rm Deg} \; s+7 & \quad  & \\
{\rm Deg} \; s+8 & \quad $\Sq^2\Sq^1 e_{s, s+5}$ & \quad $\Sq^1 e_{s, s+7}$ \\
{\rm Deg} \; s+9 & \quad $\Sq^3\Sq^1 e_{s, s+5}$ &  \\
{\rm Deg} \; s+10 & \quad $\Sq^4\Sq^1 e_{s, s+5}$ & \quad $\Sq^2\Sq^1 e_{s, s+7}$
\end{tabular}
\end{center}

\vspace{1mm}

Then to build $P_{s+1}$ together with a surjection onto $\Ker(\partial_s)$, we let
$$P_{s+1}:= \cal{A} e_{s+1, s+6} \oplus \cal{A} e_{s+1, s+8} \oplus \cdots,$$
$$\partial_{s+1} e_{s+1, s+6} = \Sq^1 e_{s, s+5}, \partial_{s+1} e_{s+1, s+8} = \Sq^1 e_{s, s+7}.$$

Thus every element of $\Ker(\partial_s)$ in the above range is contained in $\partial_{s+1}(\cal{A}e_{s+1, s+6} \oplus \cal{A}e_{s+1, s+8})$.
Furthermore, $e_{s+1, s+6}$ is connected with $e_{s, s+5}$ by $h_0$, and $e_{s+1, s+8}$ is connected with $e_{s, s+7}$ by $h_0$.
It also follows that a basis for $\Ker(\partial_{s+1})$ in degrees $t\leq s+9$ can be taken as follows.

\vspace{1mm}

\begin{center}
\begin{tabular}{l| l l}
{\rm Deg} \; s+7 & \quad $\Sq^1 e_{s+1, s+6}$ & \\
{\rm Deg} \; s+8 & \quad   & \\
{\rm Deg} \; s+9 & \quad $\Sq^2\Sq^1 e_{s+1, s+6}$ & \quad $\Sq^1 e_{s+1, s+8}$ \\
{\rm Deg} \; s+10 & \quad $\Sq^3\Sq^1 e_{s+1, s+6}$ &  \\
{\rm Deg} \; s+11 & \quad $\Sq^4\Sq^1 e_{s+1, s+6}$ & \quad $\Sq^2\Sq^1 e_{s+1, s+8}$ 
\end{tabular}
\end{center}

\vspace{1mm}

Thus we have established the following.

\begin{proposition}
The Adams $E_2$-page for $\Sigma\bb{C}P^{\infty}_2$ satisfies:

\begin{enumerate}
	\item  The groups vanish whenever $t-s \leq 4$.
	\item There is a single $h_0$-tower starting at $(s, t-s) = (0, 5)$, and there is nothing else along $t-s = 5$.
	\item There is a single $\bb{Z}_2$ at $(s, t-s) = (1, 6)$, and there is nothing else along $t-s = 6$.
	\item There is a single $h_0$-tower starting at $(s, t-s) = (0, 7)$, a single $\bb{Z}_2$ at $(s, t-s) = (2, 7)$, and there is nothing else along $t-s = 7$.
	\item There is a single $\bb{Z}_2$ at $(s, t-s) = (1, 8)$, and there is nothing else along $t-s = 8$.
\end{enumerate}
\end{proposition}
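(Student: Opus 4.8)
The plan is to read the Proposition off the minimal $\cal{A}$-resolution $P_\bullet \to H^*(\Sigma\bb{C}P^\infty_2)$ just constructed, using the standard principle that for a \emph{minimal} resolution the Adams $E_2$-page is literally the dual of the free generators. First I would recall that minimality means every differential satisfies $\partial_s(P_s)\subseteq \cal{A}^+\cdot P_{s-1}$, where $\cal{A}^+$ is the augmentation ideal; hence applying $\homo_{\cal{A}}(-,\bb{Z}/2)$ annihilates all differentials and gives
$$\ext^{s,t}_{\cal{A}}\big(H^*(\Sigma\bb{C}P^\infty_2),\bb{Z}/2\big)\cong \homo_{\cal{A}}(P_s,\bb{Z}/2)_t,$$
whose dimension equals the number of free $\cal{A}$-generators $e_{s,t}$ of $P_s$ in internal degree $t$. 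Under this identification $e_{s,t}$ represents the class at bidegree $(t-s,s)$, and a summand $\Sq^{2^i}e_{s-1,\,t-2^i}$ occurring in $\partial_s e_{s,t}$ records a nonzero $h_i$-multiplication into the class of $e_{s,t}$ (in particular $\Sq^1$ gives $h_0$ and $\Sq^2$ gives $h_1$), while decomposable summands such as $\Sq^3$ contribute no $h_i$. Thus proving the Proposition reduces to sorting, column by column in the range $t-s\le 8$, the generators produced above together with their $\Sq^{2^i}$-linkages.

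Next I would organize the generators by the column $t-s$. Column $t-s=5$ collects exactly $\{e_{s,\,s+5}\}_{s\ge 0}$ and column $t-s=7$ collects $\{e_{s,\,s+7}\}_{s\ge 0}$ together with the one extra generator $e_{2,9}'$, whereas column $t-s=6$ contains only $e_{1,7}$ and column $t-s=8$ only $e_{1,9}$; since every generator has $t-s\ge 5$, nothing appears for $t-s\le 4$, which is item (1). The $\Sq^1$-linkages recorded during the construction, namely $\partial_1 e_{1,6}=\Sq^1 e_{0,5}$, $\partial_1 e_{1,8}=\Sq^1 e_{0,7}$, and inductively $\partial_{s+1}e_{s+1,\,s+6}=\Sq^1 e_{s,\,s+5}$ and $\partial_{s+1}e_{s+1,\,s+8}=\Sq^1 e_{s,\,s+7}$, show that columns $5$ and $7$ are each a single infinite $h_0$-tower based at $(0,5)$ and $(0,7)$, giving the tower parts of items (2) and (4). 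The isolated classes are then read off: $e_{1,7}$ gives the lone $\bb{Z}/2$ at $(1,6)$ in item (3); $e_{1,9}$ gives the lone $\bb{Z}/2$ at $(1,8)$ in item (5); and $e_{2,9}'$, linked to $e_{1,7}$ only by $h_1$ through the $\Sq^2$ summand of $\partial_2 e_{2,9}'=\Sq^3 e_{1,6}+\Sq^2 e_{1,7}$, supplies the extra $\bb{Z}/2$ at $(2,7)$ in item (4).

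The only genuine content, and the step I expect to be the main obstacle, is establishing \emph{completeness} of this list across \emph{all} filtrations $s$: one must rule out surprise generators in columns $t-s=6$ or $8$ and confirm the two towers acquire no extra classes at large $s$. This is precisely what the filtration-$s>4$ induction accomplishes. Assuming that $\Ker(\partial_s)$ in degrees $t\le s+8$ is spanned by $\Sq^1 e_{s,\,s+5}$, $\Sq^2\Sq^1 e_{s,\,s+5}$, $\Sq^1 e_{s,\,s+7}$ and their listed companions, one checks that a minimal surjection onto this kernel in the relevant range needs only the two new generators $e_{s+1,\,s+6}$ (in column $t-s=5$) and $e_{s+1,\,s+8}$ (in column $t-s=7$) — the class $\Sq^2\Sq^1 e_{s,\,s+5}$ being already hit as $\Sq^2\partial_{s+1}e_{s+1,\,s+6}$ — and that $\Ker(\partial_{s+1})$ reproduces the same tabular shape shifted by one. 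This self-reproducing form guarantees the pattern stabilizes, so the entire $E_2$-page in the stated range consists of the two $h_0$-towers plus the three isolated classes $e_{1,7},\,e_{1,9},\,e_{2,9}'$, completing the verification of all five items.
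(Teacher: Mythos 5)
Your proposal is correct and is essentially the paper's own argument: the paper likewise reads the $E_2$-page off the explicit minimal resolution (free generators $e_{s,t}$ giving the $\bb{Z}/2$'s at $(t-s,s)$, with $\Sq^1$- and $\Sq^2$-summands of the $\partial_s$ recording the $h_0$- and $h_1$-structure, so the towers come from $e_{s,s+5}$ and $e_{s,s+7}$ and the isolated classes from $e_{1,7}$, $e_{1,9}$, $e_{2,9}'$). Your completeness step is also the paper's: the self-reproducing inductive description of $\Ker(\partial_s)$ in degrees $t\leq s+8$ for $s\geq 4$, which rules out extra generators in the range $t-s\leq 8$ at high filtration.
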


\begin{center}
\printpage[name =  first results, title = The Adams $E_2$-page of $\Sigma\bb{C}P^{\infty}_2$]
\end{center}

Any differential in the range $t-s\leq 8$ must be trivial, so this is also the Adams $E_\infty$ and thus stable
homotopy in this range.  
More generally if $n=2 \mod 8$, this argument shows that $\Sigma\bb{C}P^{\infty}_n$ has the following stable homotopy groups:
$$\pi^s_{2n+1}(\Sigma\bb{C}P^{\infty}_n) = \bb{Z}, \quad \pi^s_{2n+2}(\Sigma\bb{C}P^{\infty}_n) = \bb{Z}/2, \quad \pi^s_{2n+3}(\Sigma\bb{C}P^{\infty}_n) = \bb{Z}\oplus \bb{Z}/2, \quad \pi^s_{2n+4}(\Sigma\bb{C}P^{\infty}_n) = \bb{Z}/2.$$

The other seven cases can be proved in the exact same way. We obtain the following conclusions.

When $n=0, 4$ mod 8, the Adams $E_2$-page for $\Sigma\bb{C}P^{\infty}_n$ begins as follows.

\begin{center}
\DeclareSseqGroup\tower {} {
    \class(0,0)
    \foreach \y in {1,...,9} {
        \class(0,\y)
\structline
}}

\begin{sseqdata}[name = third result, Adams grading, yscale = 0.7, xscale = 1.5, y range = {0}{4}, x range = {4}{10}, no x ticks]
\begin{scope}[ background ]
\node at (5,\ymin - 1.5) {2n+1};
\node at (6,\ymin - 1.5) {2n+2};
\node at (7,\ymin - 1.5) {2n+3};
\node at (8,\ymin - 1.5) {2n+4};
\node at (9,\ymin - 1.5) {2n+5};
\end{scope}
\tower(5, 0)
\class(6, 1)
\class(7, 2)
\tower(7, 0)
\class(8, 1) \class(8, 2) \class(8, 3)
\structline(5, 0)(6, 1)
\structline(6, 1)(7, 2)
\structline(7, 2)(8, 3)
\structline(8, 1)(8, 2)
\structline(8, 2)(8, 3)
\structline(5, 0)(8, 1)
\structline(5, 1)(8, 2)
\structline(5, 2)(8, 3)
\tower(9, 1)
\d[red, dashed]2(9, 1)
\end{sseqdata}
\printpage[name = third result, scale = 0.9]
\end{center}

When $n=1, 5$ mod 8, the Adams $E_2$-page for $\Sigma\bb{C}P^{\infty}_n$ begins as follows.

\begin{center}
\DeclareSseqGroup\tower {} {
    \class(0,0)
    \foreach \y in {1,...,9} {
        \class(0,\y)
\structline
}}

\begin{sseqdata}[name = fourth result, Adams grading, yscale = 0.7, xscale = 1.5, y range = {0}{4}, x range = {4}{10}, no x ticks]
\begin{scope}[ background ]
\node at (5,\ymin - 1.5) {2n+1};
\node at (6,\ymin - 1.5) {2n+2};
\node at (7,\ymin - 1.5) {2n+3};
\node at (8,\ymin - 1.5) {2n+4};
\node at (9,\ymin - 1.5) {2n+5};
\end{scope}
\tower(5, 0)
\tower(7, 1)
\class(8, 1)
\class(8, 2)
\tower(9, 0)
\structline(8, 1)(8, 2)
\structline(5, 0)(8, 1)
\structline(5, 1)(8, 2)
\d[red, dashed]2(9, 0)
\end{sseqdata}
\printpage[name = fourth result, scale = 0.9]
\end{center}

When $n=2, 6$ mod 8, the Adams $E_2$-page for $\Sigma\bb{C}P^{\infty}_n$ begins as follows.

\begin{center}
\DeclareSseqGroup\tower {} {
    \class(0,0)
    \foreach \y in {1,...,9} {
        \class(0,\y)
\structline
}}

\begin{sseqdata}[name = first result, Adams grading, yscale = 0.7, xscale = 1.5, y range = {0}{4}, x range = {4}{10}, no x ticks]
\begin{scope}[ background ]
\node at (5,\ymin - 1.5) {2n+1};
\node at (6,\ymin - 1.5) {2n+2};
\node at (7,\ymin - 1.5) {2n+3};
\node at (8,\ymin - 1.5) {2n+4};
\node at (9,\ymin - 1.5) {2n+5};
\end{scope}
\tower(5, 0)
\class(6, 1)
\class(7, 2)
\tower(7, 0)
\class(8, 1)
\structline(5, 0)(6, 1)
\structline(6, 1)(7, 2)
\structline(7, 0)(8, 1)
\structline(5, 0)(8, 1)
\class(9, 3) 
\class(9, 4)
\structline(9, 3)(9, 4)
\end{sseqdata}
\printpage[name = first result, scale = 0.9]
\end{center}

When $n=3, 7$ mod 8, the Adams $E_2$-page for $\Sigma\bb{C}P^{\infty}_n$ begins as follows.

\begin{center}
\DeclareSseqGroup\tower {} {
    \class(0,0)
    \foreach \y in {1,...,9} {
        \class(0,\y)
\structline
}}

\begin{sseqdata}[name = second result, Adams grading, yscale = 0.7, xscale = 1.5, y range = {0}{4}, x range = {4}{10}, no x ticks]
\begin{scope}[ background ]
\node at (5,\ymin - 1.5) {2n+1};
\node at (6,\ymin - 1.5) {2n+2};
\node at (7,\ymin - 1.5) {2n+3};
\node at (8,\ymin - 1.5) {2n+4};
\node at (9,\ymin - 1.5) {2n+5};
\end{scope}
\tower(5, 0)
\tower(7, 0)
\tower(9, 2)
\end{sseqdata}
\printpage[name = second result, scale = 0.9]
\end{center}

In the last two Adams charts there is no room for any nontrivial differential $d_r$ affecting the region $t-s \leq 2n+4$, for all $r\geq 2$. 
The following can therefore be concluded immediately.

\begin{itemize}
	\item When $n=2, 6 \mod 8$, 
	$$\pi^s_{2n+1}(\Sigma\bb{C}P^{\infty}_n) = \bb{Z}, \quad \pi^s_{2n+2}(\Sigma\bb{C}P^{\infty}_n) = \bb{Z}/2, \quad \pi^s_{2n+3}(\Sigma\bb{C}P^{\infty}_n) = \bb{Z}\oplus \bb{Z}/2, \quad \pi^s_{2n+4}(\Sigma\bb{C}P^{\infty}_n) = \bb{Z}/2.$$
	\item When $n=3, 7 \mod 8$, 
	$$\pi^s_{2n+1}(\Sigma\bb{C}P^{\infty}_n) = \bb{Z}, \quad \pi^s_{2n+2}(\Sigma\bb{C}P^{\infty}_n) = 0, \quad \pi^s_{2n+3}(\Sigma\bb{C}P^{\infty}_n) = \bb{Z}, \quad \pi^s_{2n+4}(\Sigma\bb{C}P^{\infty}_n) = 0.$$
\end{itemize}

However, in each of the first two Adams charts (i.e., when $n=0, 4$ or $1, 5 \mod 8$) there is a possible $d_2$ differential (which is presented as the red dashed arrow in the chart), which has to do with the determination of $\pi_{2n+4}^s(\Sigma\bb{C}P^{\infty}_n)$. To determine these differentials, we recall some classical results. 

First a result of Toda \cite{Toda}  relates the stable homotopy groups of $\bb{C}P^{\infty}_n$ to the metastable homotopy groups of unitary groups.

\begin{theorem}[Toda \cite{Toda}] \label{Toda}
Let $0\leq t <n$. Then $\pi^s_{2n+2t+1}(\bb{C}P^{\infty}_n) = \pi_{2n+2t+1} U(n)$.
\end{theorem}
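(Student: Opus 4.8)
The plan is to connect the two sides through the complex Stiefel fibration and James' metastable comparison of complex Stiefel manifolds with stunted projective spaces. Recall that $U/U(n) = \hofib\big(BU(n)\to BU\big)$ sits in the fibration $U(n)\to U\to U/U(n)$, that $U/U(n)$ is $2n$-connected, and that its bottom cell lies in dimension $2n+1$ — matching the bottom cell of $\Sigma\bb{C}P^{\infty}_n$. I would prove the theorem in three steps, tracking dimensions so that the hypothesis $0\le t<n$ emerges precisely as the range of validity of the metastable comparison.

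First I would show that $\pi_{2n+2t+1}U(n)\cong \pi_{2n+2t+2}(U/U(n))$ for every $t\ge 0$, with no metastable restriction yet. Since $U(n)$ has the rational homotopy type of $S^1\times S^3\times\cdots\times S^{2n-1}$, the group $\pi_i U(n)$ is finite for every $i>2n-1$; in particular $\pi_{2n+2t+1}U(n)$ is torsion. The long exact sequence of $U(n)\to U\to U/U(n)$ contains the segment $\pi_{2n+2t+2}U\to \pi_{2n+2t+2}(U/U(n))\xrightarrow{\partial}\pi_{2n+2t+1}U(n)\to \pi_{2n+2t+1}U$. By Bott periodicity $\pi_{2n+2t+2}U=0$ while $\pi_{2n+2t+1}U\cong\bb{Z}$, so the right-hand map sends a torsion group into $\bb{Z}$ and therefore vanishes, and the left-hand vanishing forces $\partial$ to be injective. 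Hence $\partial$ is an isomorphism.

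Next I would invoke James' metastable approximation of the complex Stiefel manifolds $V_{m,\,m-n}=U(m)/U(n)$ by suspended stunted projective spaces: the reflection (intrinsic join) map $\Sigma\bb{C}P^{m-1}_n\to V_{m,\,m-n}$ induces an isomorphism on homotopy groups through roughly twice the connectivity, and passing to the colimit as $m\to\infty$ produces a comparison map $\Sigma\bb{C}P^{\infty}_n\to U/U(n)$ that is an isomorphism on $\pi_i$ for $i\le 4n$. Combined with the previous step, this gives $\pi_{2n+2t+1}U(n)\cong \pi_{2n+2t+2}(\Sigma\bb{C}P^{\infty}_n)$ exactly when $2n+2t+2\le 4n$, that is, when $t<n$. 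Finally, since $\Sigma\bb{C}P^{\infty}_n$ is a $2n$-connected suspension, Freudenthal identifies its unstable and stable homotopy in this range, so $\pi_{2n+2t+2}(\Sigma\bb{C}P^{\infty}_n)\cong \pi^s_{2n+2t+2}(\Sigma\bb{C}P^{\infty}_n)=\pi^s_{2n+2t+1}(\bb{C}P^{\infty}_n)$, and chaining the three isomorphisms proves the theorem.

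I expect the main obstacle to be the middle step: constructing the comparison map and, above all, pinning down the exact dimension through which it is an equivalence, since the clean cutoff $t<n$ rests on the sharp metastable estimate for complex Stiefel manifolds (the complex analogue of James' intrinsic-join analysis). By contrast, the degree bookkeeping in the first step is routine once one has the rational triviality of $\pi_{>2n-1}U(n)$ and Bott periodicity in hand, and the Freudenthal step is immediate from connectivity.
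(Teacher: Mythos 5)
The paper does not prove this statement at all --- it is imported verbatim from Toda's 1959 paper as a known result, so there is no internal proof to compare against. Judged on its own, your reconstruction is correct, and the labor is correctly apportioned. Step 1 is sound: in the exact sequence $\pi_{2n+2t+2}U \to \pi_{2n+2t+2}(U/U(n)) \xrightarrow{\partial} \pi_{2n+2t+1}U(n) \to \pi_{2n+2t+1}U$, Bott periodicity kills the left-hand group (even degree) and makes the right-hand one $\bb{Z}$, while $\pi_iU(n)$ is finite for $i>2n-1$, so $\partial$ is an isomorphism. Step 3 is also exact: $\Sigma\bb{C}P^{\infty}_n$ is $2n$-connected, so Freudenthal gives $\pi_i\cong\pi_i^s$ for $i\le 4n$, and $2n+2t+2\le 4n$ is precisely $t<n$. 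The crux is step 2, and you flag it honestly; two remarks there. First, the sharp range need not be taken on faith from Stiefel-manifold geometry: once one knows the reflection map $\Sigma\bb{C}P^{\infty}_n\to U/U(n)$ hits the exterior generators of $H^*(U/U(n))\cong\Lambda(x_{2n+1},x_{2n+3},\dots)$, the first decomposable class sits in degree $(2n+1)+(2n+3)=4n+4$, so the map is a homology isomorphism through degree $4n+3$ and hence, both spaces being simply connected, an isomorphism on $\pi_i$ for $i\le 4n+2$ --- comfortably covering the $i\le 4n$ you need, and with no circularity, since this homological input is independent of the theorem being proved. Second, within this paper you could have bypassed James entirely: Proposition \ref{1stlayer} and Corollary \ref{ctd} say that the Weiss tower of $BU(-)$ at $\bb{C}^n$ has first layer $Q\Sigma\bb{C}P^{\infty}_n$ and higher layers at least $4n$-connected, so the comparison $U/U(n)\to Q\Sigma\bb{C}P^{\infty}_n$ is an isomorphism on $\pi_i$ for $i\le 4n$; this fuses your steps 2 and 3 into one, and chained with your step 1 it reproves Toda's theorem using only machinery the paper already sets up in Section 2 --- arguably the proof most in the spirit of the paper, whereas your route via the intrinsic join is closer to the original Toda--James comparison of quasi-projective spaces with unitary groups.
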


Secondly, the relevant  homotopy groups were computed by Matsunaga \cite{Matsunaga}.

\begin{theorem}[Matsunaga \cite{Matsunaga}] \label{Matsunaga}
Two-locally,  metastable homotopy groups of $U(n)$ are given as follows
\begin{enumerate}
	\item $\pi_{2n+3} U(n) = \bb{Z}/8$ when $n = 0 \mod 8$.
	\item $\pi_{2n+3} U(n) = \bb{Z}/4$ when $n = 4 \mod 8$.
	\item $\pi_{2n+3} U(n) = \bb{Z}/4$ when $n = 1 \mod 8$.
	\item $\pi_{2n+3} U(n) = \bb{Z}/2$ when $n = 5 \mod 8$.
\end{enumerate}
\end{theorem}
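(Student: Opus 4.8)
The plan is to convert this \emph{unstable} computation into a \emph{stable} one, where the machinery already developed applies, and then to isolate the single genuinely subtle point: an extension (equivalently, a hidden $d_2$) that primary cohomology operations cannot see. The first step is to invoke Toda's theorem, Theorem~\ref{Toda}, with $t=1$: since $1<n$ for every $n\geq 2$, it yields a natural isomorphism $\pi_{2n+3}U(n)\cong \pi^s_{2n+3}(\bb{C}P^{\infty}_n)$. Thus it suffices to compute the $2$-local stable homotopy group $\pi^s_{2n+3}(\bb{C}P^{\infty}_n)$, which is the same stunted-projective-space stem studied throughout the paper (note $\pi^s_{2n+3}(\bb{C}P^{\infty}_n)=\pi^s_{2n+4}(\Sigma\bb{C}P^{\infty}_n)$).

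Next I would reduce to the bottom cells. The space $\bb{C}P^{\infty}_n$ has cells in dimensions $2n,2n+2,2n+4,\dots$, and the cell in dimension $2n+2k$ contributes $\pi^s_{3-2k}$ to $\pi^s_{2n+3}$. Only $k=0$ and $k=1$ contribute nontrivially: $\pi_3^s$ (which is $\bb{Z}/8$ two-locally, generated by $\nu$) from the bottom cell, and $\pi_1^s=\bb{Z}/2$ (generated by $\eta$) from the $(2n+2)$-cell, while all higher cells land in nonpositive stems. Hence the group is assembled from $\bb{Z}/8$ and $\bb{Z}/2$, and I would organize this through the long exact sequence of the cofiber sequence $S^{2n}\to \bb{C}P^{n+1}_n\to S^{2n+2}\xrightarrow{\partial} S^{2n+1}$. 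As recalled in the paper, $\partial$ is detected by $\Sq^2$ (via $\Sq^2 x^n=n\,x^{n+1}$), so $\partial=\eta$ when $n$ is odd and $\partial$ is null when $n$ is even; this is what separates the even cases ($n\equiv 0,4$) from the odd ones ($n\equiv 1,5$) and already explains why the even cases give the larger groups.

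The crux is to determine the exact order, i.e.\ to decide $\bb{Z}/8$ versus $\bb{Z}/4$ versus $\bb{Z}/2$. This is governed by the attaching map $\alpha\in \pi^s_{2n+3}(S^{2n})=\pi_3^s$ of the $(2n+4)$-cell onto the bottom cell, which is a multiple of $\nu$; the order of the image of the bottom $\bb{Z}/8$, together with the top $\bb{Z}/2$ and the extension between them, is controlled by $\alpha$ and by $\partial$. The primary operation $\Sq^4 x^n=\binom{n}{2}\,x^{n+2}$ detects only $\alpha\bmod 2$ (and indeed $\binom{n}{2}$ is even for all four residues $n\equiv 0,1,4,5$, so $\alpha$ is an even multiple of $\nu$), and therefore cannot distinguish $0$ from $2\nu$ from $4\nu$ on its own --- this is precisely the hidden $d_2$ appearing as the red dashed arrows in the Appendix charts. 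To pin down $\alpha$ exactly I would use the real Adams $e$-invariant: the stem $\pi_3^s$ lies entirely in the image of $J$, so $e_{\bb{R}}$ is a complete invariant there, and $\alpha$ is computed from the $K$-theory of the finite skeleton $\bb{C}P^{n+2}_n$ together with the Adams operations and the Chern character. The resulting order is the $2$-adic valuation of a binomial-type coefficient that is periodic of period $8$ in $n$, reproducing $\bb{Z}/8$ for $n\equiv 0$, $\bb{Z}/4$ for $n\equiv 4$ and $n\equiv 1$, and $\bb{Z}/2$ for $n\equiv 5\pmod 8$.

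\textbf{The main obstacle} is exactly this extension problem: everything up to the $E_2$-page is routine, but deciding the order requires secondary information invisible to $\Sq$-operations. I expect two viable routes. The $e$-invariant / $K$-theory route sketched above is the most transparent, since the period-$8$ behavior falls out of a single $2$-adic valuation. Alternatively one can follow Matsunaga's original strategy and work directly with the complex Stiefel fibration $S^{2n+1}\to U(n+2)/U(n)\to S^{2n+3}$, using the EHP-type exact sequence and the known homotopy of spheres to track the order of the generator; this is essentially Toda's isomorphism read in reverse and produces the same numerics. Either way, the conceptual content is that $\pi_{2n+3}U(n)$ is an image-of-$J$ phenomenon, which is why its order is a $2$-adic valuation exhibiting period $8$.
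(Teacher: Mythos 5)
The paper does not prove this statement: it is imported wholesale from Matsunaga \cite{Matsunaga}, and its only role here is, via Toda's Theorem~\ref{Toda}, to pin down the dashed $d_2$'s in the Appendix's Adams charts for $n\equiv 0,1,4,5 \bmod 8$. So your proposal is necessarily an independent reconstruction rather than a match with an argument in the paper, and as a strategy it is sound and classical: stabilize via Theorem~\ref{Toda} (your $t=1$, $n\geq 2$ check is correct), reduce to the three-cell skeleton $\bb{C}P^{n+2}_n$, separate the even and odd residues via $\Sq^2 x^n = n\,x^{n+1}$, observe that $\Sq^4 x^n = \binom{n}{2}x^{n+2}$ sees only $\alpha \bmod 2$ and that $\binom{n}{2}$ is even in all four residues, and then determine the attaching class by the real $e$-invariant, which is complete on $\pi_3^s = \mathrm{im}\,J$. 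This is essentially Mosher's method (the same one behind the paper's Lemma~\ref{AHSSd4}), and you correctly avoid the circularity that would arise from trying to settle the matter inside the Adams spectral sequence, where the ambiguity lives.

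The genuine gap is that the decisive step is asserted, not performed. The entire content of the theorem is the claimed $2$-adic valuation ``falling out'' of the $K$-theory of $\bb{C}P^{n+2}_n$ with its Adams operations; everything preceding it is routine, so as written you have reduced Matsunaga's theorem to an equally unproved claim of the same depth. Two specific points would need care before that computation can even be set up. First, your ``attaching map $\alpha$ of the $(2n+4)$-cell onto the bottom cell'' is only a coset: when the middle cell attaches by $\eta$, the indeterminacy through the middle cell is $\eta\circ\eta^2 = \eta^3 = 4\nu$, and extracting a well-defined class requires the wedge splittings of the two-cell subquotients (exactly Mosher's composites $\delta_{l-2}\circ j$ and $p\circ\delta_{l-1}$, reproduced in Section 4.3.1 of the paper). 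Relatedly, your long exact sequence through the two-cell complex $\bb{C}P^{n+1}_n$ does not see that for $n$ even the middle-cell $\bb{Z}/2$ is killed by the $(2n+4)$-cell (since $\Sq^2 x^{n+1}\neq 0$), which is why the even answers are cyclic rather than carrying an extra $\bb{Z}/2$ summand. Second, the group is not literally ``$\bb{Z}/8$ modulo a subgroup read off from $\alpha$ and $\partial$'': hidden extensions intervene. For instance, when $n$ is odd one has $\eta\iota = 0$, hence $4\nu\iota = \eta^3\iota = 0$ automatically, and the distinction $\bb{Z}/4$ versus $\bb{Z}/2$ for $n\equiv 1$ versus $n\equiv 5$ comes down to whether $2\nu\iota$ survives --- precisely the kind of question that naive coset bookkeeping with the $E_\infty$-page can get swapped. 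Your alternative suggestion (the Stiefel fibration $S^{2n+1}\to U(n+2)/U(n)\to S^{2n+3}$ with EHP-type sequences) is in fact closer to Matsunaga's original argument and would be an equally legitimate route, but it too is only named, not executed.
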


So by Theorem \ref{Toda}, $\pi_{2n+4}^s(\Sigma\bb{C}P^{\infty}_n)$ is given by the list of \ref{Matsunaga}. 
It follows that the $d_2$ differential of interest must be zero when $n = 0, 1 \mod 8$, 
and must be an isomorphism when $n = 4, 5 \mod 8$.
We can now conclude the followings.

\begin{itemize}
	\item When $n=0 \mod 8$, 
	$$\pi^s_{2n+1}(\Sigma\bb{C}P^{\infty}_n) = \bb{Z}, \quad \pi^s_{2n+2}(\Sigma\bb{C}P^{\infty}_n) = \bb{Z}/2, \quad \pi^s_{2n+3}(\Sigma\bb{C}P^{\infty}_n) = \bb{Z}\oplus \bb{Z}/2, \quad \pi^s_{2n+4}(\Sigma\bb{C}P^{\infty}_n) = \bb{Z}/8.$$
	\item When $n=4 \mod 8$, 
	$$\pi^s_{2n+1}(\Sigma\bb{C}P^{\infty}_n) = \bb{Z}, \quad \pi^s_{2n+2}(\Sigma\bb{C}P^{\infty}_n) = \bb{Z}/2, \quad \pi^s_{2n+3}(\Sigma\bb{C}P^{\infty}_n) = \bb{Z}\oplus \bb{Z}/2, \quad \pi^s_{2n+4}(\Sigma\bb{C}P^{\infty}_n) = \bb{Z}/4.$$
	\item When $n=1 \mod 8$, 
	$$\pi^s_{2n+1}(\Sigma\bb{C}P^{\infty}_n) = \bb{Z}, \quad \pi^s_{2n+2}(\Sigma\bb{C}P^{\infty}_n) = 0, \quad \pi^s_{2n+3}(\Sigma\bb{C}P^{\infty}_n) = \bb{Z}, \quad \pi^s_{2n+4}(\Sigma\bb{C}P^{\infty}_n) = \bb{Z}/4.$$
	\item When $n=5 \mod 8$, 
	$$\pi^s_{2n+1}(\Sigma\bb{C}P^{\infty}_n) = \bb{Z}, \quad \pi^s_{2n+2}(\Sigma\bb{C}P^{\infty}_n) = 0, \quad \pi^s_{2n+3}(\Sigma\bb{C}P^{\infty}_n) = \bb{Z}, \quad \pi^s_{2n+4}(\Sigma\bb{C}P^{\infty}_n) = \bb{Z}/2.$$
\end{itemize}

This completes the 2-local computation and proves Lemma \ref{sthomotopygeneral}.

\subsection{Proof of Lemma \ref{sthomotopygeneral3}}
We now work 3-locally to prove Lemma \ref{sthomotopygeneral3}. The strategy is similar. 
The action of the mod 3 Steenrod algebra on the mod 3 cohomology of $\bb{C}P_n^{n+4}$ exhibits the following 3-fold periodicity. 
We present these actions in terms of diagrams. They correspond, respectively, to cases $n= 0, 1, 2$ mod 3.
Here each curved segment indicates a nontrivial action of $\cal{P}^1$. 

\begin{center}
\begin{minipage}{0.2\textwidth}
  $n = 0 \mod 3$
\end{minipage}
\hspace{5mm}
\begin{minipage}{0.4\textwidth}
  \begin{tikzpicture}
	 \draw (0, 0) node[below]{{\footnotesize{$y_{2n+1}$}}}; \draw (1,0) node[below]{{\footnotesize{$y_{2n+3}$}}}; \draw (2,0) node[below]{{\footnotesize{$y_{2n+5}$}}}; \draw (3,0) node[below]{{\footnotesize{$y_{2n+7}$}}}; \draw (4,0) node[below]{{\footnotesize{$y_{2n+9}$}}};
	  \draw (3, 0) arc (45:135:1.41cm); \draw (4, 0) arc (-45:-135:1.41cm); 
	\end{tikzpicture}
\end{minipage}
\end{center}

\begin{center}
\begin{minipage}{0.2\textwidth}
  $n = 1 \mod 3$
\end{minipage}
\hspace{5mm}
\begin{minipage}{0.4\textwidth}
  \begin{tikzpicture}
	 \draw (0, 0) node[below]{{\footnotesize{$y_{2n+1}$}}}; \draw (1,0) node[below]{{\footnotesize{$y_{2n+3}$}}}; \draw (2,0) node[below]{{\footnotesize{$y_{2n+5}$}}}; \draw (3,0) node[below]{{\footnotesize{$y_{2n+7}$}}}; \draw (4,0) node[below]{{\footnotesize{$y_{2n+9}$}}};
	  \draw (2, 0) arc (45:135:1.41cm); \draw (3, 0) arc (-45:-135:1.41cm);
	\end{tikzpicture}
\end{minipage}
\end{center}

\begin{center}
\begin{minipage}{0.2\textwidth}
  $n = 2 \mod 3$
\end{minipage}
\hspace{5mm}
\begin{minipage}{0.4\textwidth}
  \begin{tikzpicture}
	 \draw (0, 0) node[below]{{\footnotesize{$y_{2n+1}$}}}; \draw (1,0) node[below]{{\footnotesize{$y_{2n+3}$}}}; \draw (2,0) node[below]{{\footnotesize{$y_{2n+5}$}}}; \draw (3,0) node[below]{{\footnotesize{$y_{2n+7}$}}}; \draw (4,0) node[below]{{\footnotesize{$y_{2n+9}$}}};
	  \draw (2, 0) arc (-45:-135:1.41cm); \draw (4, 0) arc (45:135:1.41cm); 
	\end{tikzpicture}
\end{minipage}
\end{center}

One can then construct explicit minimal resolutions to compute the Adams $E_2$ page in each case. 
Note that when $n = 1$ mod 3, the resolution can be taken as a degree $2n-1$ shift of a resolution of $\bb{C}P^{\infty}$, which can be learned from Aikawa \cite{Aikawa}.  When $n = 0$ mod 3, the resolution can be taken as a direct sum of a resolution of $\bb{Z}/3$ and that of $\bb{C}P^{\infty}$ followed by a degree shift of $2n+1$. We omit the details 
of constructing resolutions and simply provide the Adams charts.

When $n = 0$ mod 3, the Adams $E_2$-page for $\Sigma\bb{C}P^{\infty}_n$ begins as follows.

\begin{center}
\DeclareSseqGroup\tower {} {
    \class(0,0)
    \foreach \y in {1,...,9} {
        \class(0,\y)
\structline
}}

\begin{sseqdata}[name = 3primary0, Adams grading, yscale = 0.7, xscale = 1.5, y range = {0}{4}, x range = {4}{10}, no x ticks]
\begin{scope}[ background ]
\node at (5,\ymin - 1.5) {2n+1};
\node at (6,\ymin - 1.5) {2n+2};
\node at (7,\ymin - 1.5) {2n+3};
\node at (8,\ymin - 1.5) {2n+4};
\node at (9,\ymin - 1.5) {2n+5};
\end{scope}
\tower(5, 0)
\tower(7, 0)
\tower(9, 0)
\class(8, 1)
\structline(5, 0)(8, 1)
\end{sseqdata}
\printpage[name = 3primary0, scale = 0.95]
\end{center}

When $n = 1$ mod 3, the beginnings of the Adams $E_2$-page for $\Sigma\bb{C}P^{\infty}_n$ is as follows.

\begin{center}
\DeclareSseqGroup\tower {} {
    \class(0,0)
    \foreach \y in {1,...,9} {
        \class(0,\y)
\structline
}}

\begin{sseqdata}[name = 3primary1, Adams grading, yscale = 0.7, xscale = 1.5, y range = {0}{4}, x range = {4}{10}, no x ticks]
\begin{scope}[ background ]
\node at (5,\ymin - 1.5) {2n+1};
\node at (6,\ymin - 1.5) {2n+2};
\node at (7,\ymin - 1.5) {2n+3};
\node at (8,\ymin - 1.5) {2n+4};
\node at (9,\ymin - 1.5) {2n+5};
\end{scope}
\tower(5, 0)
\tower(7, 0)
\tower(9, 1)
\end{sseqdata}
\printpage[name = 3primary1, scale = 0.95]
\end{center}

When $n = 2$ mod 3, the beginnings of the Adams $E_2$-page for $\Sigma\bb{C}P^{\infty}_n$ is as follows.

\begin{center}
\begin{sseqdata}[name = 3primary2, Adams grading, yscale = 0.7, xscale = 1.5, y range = {0}{4}, x range = {4}{10}, no x ticks]
\begin{scope}[ background ]
\node at (5,\ymin - 1.5) {2n+1};
\node at (6,\ymin - 1.5) {2n+2};
\node at (7,\ymin - 1.5) {2n+3};
\node at (8,\ymin - 1.5) {2n+4};
\node at (9,\ymin - 1.5) {2n+5};
\end{scope}
\tower(5, 0)
\tower(7, 0)
\tower(9, 1)
\end{sseqdata}
\printpage[name = 3primary2, scale = 0.95]
\end{center}

In all the cases above, there is no room for any differential $d_r$, for all $r\geq 2$. 
One can therefore read off the desired 3-local stable homotopy groups immediately. Namely,

\begin{itemize}
	\item $\pi^s_{2n+1}(\Sigma\bb{C}P^{\infty}_n) = \bb{Z}$, $\pi^s_{2n+2}(\Sigma\bb{C}P^{\infty}_n) = 0$, and $\pi^s_{2n+3}(\Sigma\bb{C}P^{\infty}_n) = \bb{Z}$.
	\item $\pi^s_{2n+4}(\Sigma\bb{C}P^{\infty}_n)$ exhibits the following 3-fold periodic behavior. It is zero when $n = 0, 1$ mod 3, and is $\bb{Z}/3$ when $n=2$ mod 3.
\end{itemize}

This completes the 3-local computations, and proves Lemma \ref{sthomotopygeneral3}.

\end{document}